\begin{document}

\newcommand{\diag}{{\rm diag}}
\newcommand{\argmin}{{\rm argmin}}
\newcommand{\off}{{\rm off}}
\newcommand{\sgn}{{\rm sgn}}
\newcommand{\spann}{{\rm span}}
\newcommand{\st}{{\rm s.t.}}
\newcommand{\trace}{{\rm trace}}
\renewcommand{\O}{{\cal O}}
\renewcommand{\P}{{\cal P}}
\newcommand{\R}{{\cal R}}
\newcommand{\V}{{\cal V}}
\newcommand{\sign}{{\rm sign}}

\title{Exterior Point Method for Completely Positive Factorization %\thanks{Grants or other notes
%about the article that should go on the front page should be
%placed here. General acknowledgments should be placed at the end of the article.}
}
%\subtitle{Do you have a subtitle?\\ If so, write it here}
%\titlerunning{Short form of title}        % if too long for running head

\author{Zhenyue Zhang         \and
        Bingjie Li 
}

%\authorrunning{Short form of author list} % if too long for running head

\institute{Zhenyue Zhang \at
School of Mathematics Science, Zhejiang University, 
Yuquan Campus, Hangzhou 310038, China \\
Research Center for Advanced Artificial Intelligence Theory, Zhejiang Lab, Hangzhou 311121, China\\
\email{zyzhang@zju.edu.cn}           %  \\
%             \emph{Present address:} of F. Author  %  if needed
\and
Bingjie Li \at
School of Mathematics Science, Zhejiang University, 
Yuquan Campus, Hangzhou 310038, China \\
\email{11535017@zju.edu.cn}  
}

\date{Received: date / Accepted: date}
% The correct dates will be entered by the editor

\maketitle

\begin{abstract}
 Completely positive factorization (CPF) is a critical task with applications in many fields. This paper proposes a novel method for the CPF. Based on the idea of exterior point iteration, an optimization model is given, which aims to orthogonally transform a symmetric lower rank factor to be nonnegative. The optimization problem can be solved via a modified nonlinear conjugate gradient method iteratively. The iteration points locate on the exterior of the orthonormal manifold and the closed set whose transformed matrices are nonnegative before convergence generally. Convergence analysis is given for the local or global optimum of the objective function, together with the iteration algorithm. Some potential issues that may affect the CPF are explored numerically. The exterior point method performs much better than other algorithms, not only in the efficiency of computational cost or accuracy, but also in the ability to address the CPF in some hard cases. 

\keywords{completely positive factorization \and completely positive rank \and nonlinear conjugate gradient method \and nonnegative factorization of matrices}

% \PACS{PACS code1 \and PACS code2 \and more}
\subclass{65K05 \and 90C17 \and 90C26}
% 65K05: Numerical mathematical programming methods
% 90C17: Robustness in mathematical programming
% 90C26: Nonconvex programming, global optimization
\end{abstract}

\section{Introduction}\label{intro}

Completely positive factorization (CPF) of a nonnegative, symmetric, and positive semidefinite matrix $A$ looks for a nonnegative factorization $A = BB^T$ with a nonnegative factor $B$.
If such a CPF exists, the matrix $A$ is said to be completely positive. More preciously, a {\it strict} CPF of $A$ means the factorization $A = BB^T$ with a nonnegative factor $B$ having the smallest column number $r_{cp}$ since such a factorization may be not unique if it exists. The smallest column number $r_{cp}$ is also called as the completely positive rank, or cp-rank for short, of $A$.

The CPF problem appears in many fields such as balanced incomplete block designing \cite{BH1979}, energy demand model designing \cite{GW1980}, probability estimation of DNA sequences \cite{K1994}. In matrix games, completely positive matrices are used to check if the Nash equilibrium can be found by algorithm Lemke that solves a linear complementary problem \cite{MZ1991}. In the recent decade, the CPF plays an important role in combinatorial and nonconvex quadratic optimization. For example, some binary quadratic problem or graph optimization can be relaxed as a continuous optimization problem over a set of completely positive matrices \cite{B2009,B2012,D2010}, and the iteration of completely positive matrices could be implemented in a cone where the updated completely positive matrices are given in the form of convex combination of rank-one nonnegative matrices \cite{BD2009}, or via optimizing the nonnegative factors of the CPF \cite{BJR2011}. In the latter case, an initial CPF should be given for running the factor-optimization. 
In data science, CPF or completely positive approximation can also be used to refine noisy graphs for data clustering \cite{KDP2012,KYP2015,XXS2010} or to estimate the probabilities of extreme events happen in nature or financial markets \cite{CT2019}.
Hence, the CPF is interesting in both of theory analysis and applications. 

However, the CPF problem is NP-hard in the following issues \cite{BDS2015}: detecting whether a nonnegative, symmetric, and positive semidefinite matrix is completely positive \cite{DG2012}, determining the cp-rank of a completely positive matrix, finding a nonnegative factor $B$ when the cp-rank $r_{cp}(A)$ of $A$ is known \cite{GD2018}, 
or solving the problem $\min_{B\geq 0} \|A-BB^T\|_F^2$ for completely positive approximation, where $B$ has a given number of columns \cite{VGL2016}. 
Only for a narrow part of completely positive matrices with special structures, the CPF can be obtained directly or within polynomial time. For example, diagonal dominant nonnegative symmetric matrices are completely positive \cite{K1987}. 
A bipartite graph, {\it i.e.}, $A = \left[\begin{array}{ll}
     D_1 & C^T \\
     C   & D_2
\end{array}
\right]$ with positive diagonal $D_1$ and $D_2$ and nonnegative $C$, is completely positive and has an explicit CPF \cite{BG1988}. 
For an acyclic or circular graph, if it is positive semidefinite, one can check whether it is completely positive or not within polynomial time. Furthermore, if it is completely positive, one can also get its CPF within polynomial time \cite{DD2012}. 
It was shown in \cite{KG2012} that if a completely positive matrix of rank $r$ has a principal diagonal matrix of order $r$, then its cp-rank equals to $r$ and its CPF can be obtained explicitly. 

There are some efforts on algorithms for this problem in the literature. Moody Chu proposed a direct approach in \cite{DLM2014} via successive rank-one reduction, based on the Wedderburn's formula to reduce the rank \cite{W1934}: $\hat A = A-\sigma^{-1}uu^T$ with $u = Ax$ and $\sigma = x^TAx$. Here $x$ should yield a nonnegative $u$ and a positive $\sigma$, and meanwhile, $\hat A$ is also nonnegative.\footnote{The procedure terminates if such an $x$ cannot be obtained.} A greedy approach was given in \cite{DLM2014} to determine such an $x$ via maximizing the smallest entries of $\hat A$. Kuang et al. considered the natural model $\min_{W\geq 0} \|A-WW^T\|_F^2$ and solved it by projected Newton method in \cite{KDP2012}. This problem was relaxed to the penalty form $\min_{W,H \geq 0}\|A-WH^T\|_{F}^2+\lambda \|W-H\|_F^2$ in  \cite{KYP2015} so that it can be solved iteratively via alternatively optimizing the factors $W$ and $H$, each is a nonnegative least square problem without a closed form solution. 
Writing $\|A-WW^T\|_F^2 = \|A_j-w_{\cdot j}w_{\cdot j}^T\|^2_2$ for each $j$, where $w_{\cdot j}$ is the $j$-th column of $W$ and $A_j = A-\sum_{i\neq j}w_{\cdot i}w_{\cdot i}^T$,\footnote{We will use $w_i$ for the $i$-th row of $W$.} In \cite{VGL2016}, Vandaele et al. considered the successive column-updating: $\min_{w_{\cdot j}\geq 0} \|A_j-w_{\cdot j}w_{\cdot j}^T\|_F^2$, sweeping all the columns of $W$ repeatedly until convergence. 

Different from the direct optimization on nonnegative factors, \cite{HSS2014} and \cite{GD2018} considered two approaches that orthogonally transform a full-rank factor $W$ of the symmetric factorization $A = WW^T$ to be nonnegative as $B=WQ$ directly or indirectly. In \cite{HSS2014}, the explicit error $\|B-WQ\|_F^2$ is minimized, subjected to nonnegative $B$ and orthogonal $Q$.
This problem is solved by alternatively optimizing nonnegative $B$ and orthogonal $Q$, and the iteration converges to a point satisfying its first order condition of local optimum. 
The optimization considered in \cite{GD2018} is implemented in the original space as $\min\|Q-P\|_F^2$ subjected to orthogonal $Q$ and restricted matrix $P$ with nonnegative image $WP$. This problem is also solved by alternative iteration on $Q$ and $P$. However, it is hard to solve the subproblem on $P$ because of the implicit restriction on $P$.\footnote{An approximate rule of $P$ was suggested in \cite{GD2018} to simplify the updating, but $WP$ may be no longer nonnegative.}
Local convergence of these alternative projection algorithms was given in \cite{GD2018}. However, it is not clear if the local convergence could be linear. In \cite{D2013}, Drusvyatskiy proved that for an alternative projection onto two closed sets $\cal X$ and $\cal Y$, the linear convergence occurring nearby an intersected point $z$ of $\cal X$ and $\cal Y$ if $\cal X$ intersects $\cal Y$ at $z$ transversally. That is, at $z$, the normal cone of $\cal X$ and the negative normal cone of $\cal Y$ are intersected at the origin only. For the alternative projection considered in \cite{GD2018}, this condition at an orthogonal $Q$ with nonnegative $B=WQ$ is equivalent to that for any nonnegative matrix $Y\in {\mathbb R}^{n\times r}$, $S = B^TY$ is not symmetric or its trace is not zero if $S\neq 0$. See Proposition \ref{prop:QP} given in Appendix A. This condition is not satisfied in some cases.
Below is an instance: the completely positive matrix $A$ has a factor $W$, and there is an orthogonal $Q$ with $B=WQ\geq 0$ and a $Y\geq 0$ such that $B^TY$ is symmetric and nonzero. However, it has a zero trace. 
\[A = \left[\begin{matrix}6 & 2 & 6 & 2\\
                     2 & 8 & 2 & 6\\
                     6 & 2 & 9 & 1\\
                     2 & 6 & 1 & 5\\
 \end{matrix}\right], \quad
 W =
 \left[\begin{matrix}-1 & 1  & 2 & 0  \\
                     -2 & 0  & 0 & -2 \\
-                      0 & 2  & 2 & -1 \\
                     -2 & 0  & 0 & -1 \\
 \end{matrix}\right],\quad
 Q =
 \left[\begin{matrix}0 &  0  & -1 & 0  \\
                     0 &  0  &  0 & 1 \\
                     1 &  0  &  0 & 0 \\
                     0 & -1  &  0 & 0 \\
 \end{matrix}\right],\quad
 Y=
 \left[\begin{matrix}0 & 1 & 0 & 0\\
                     1 & 0 & 0 & 0\\
                     0 & 0 & 1 & 0\\
                     0 & 0 & 0 & 1\\
 \end{matrix}\right].
\]
These algorithms mentioned above may suffer from the nonnegative or orthogonal restrictions. Because of the restrictions, an inner iteration is required to solve the involved subproblems if there are no closed form solutions, or adopt an estimated solution instead, but it may delay the convergence. 

In this paper, we consider a novel approach for the CPF without restrictions. Different from the alternative projection mentioned above, our idea is {\it exterior point iteration} that pursues an orthogonal $Q$ on the exterior of the set of orthogonal matrices and the set of $P$'s whose images $WP$ are nonnegative. It may be expected that such an exterior point iteration is more efficient since no restrictions should be obeyed.
We will give an optimization model for implementing the exterior point iteration. 
Decreasing the objective function at a point $X$ implicitly propels $X$ moving toward the set of orthogonal matrices, and meanwhile, its image $WX$ is nonnegative eventually. We will show some properties about the first order condition and the second order condition for the optimal solutions. We will also characterize the global optimum that can be used to check whether the iterative point will go to a global optimal one or not, when we solve this optimization problem iteratively. Based on this property, a restart strategy can be also used in order to avoid local optimum as much as possible. 
The classical nonlinear conjugate gradient (NCG) method requires a twice continuously differentiable function \cite{A1985,SY2006} or differentiable function with Lipschitz continuous gradient \cite{SY2006} to guarantee the convergence. We modify the NCG so that the condition of the objective function can be weakened to be continuously differentiable
as that in our model. The modified NCG performs very efficiently in our experiments.

It is not clear what is the dominant issue that determines the difficulty of CPF. In this paper, we will numerically explore the three possible issues: the cp-rank of $A$, the sparsity of the nonnegative factor $B$ in a CPF of $A$, or the approximately cp-rank deficiency of $A$. An interesting observation is that there is a special rank-sparsity boundary nearby which the CPF is much harder than others. 
We believe that these phenomena offer some important insight into the CPF problem. 

The exterior point method performs much better than other algorithms given in the literature, not only on the suitableness of completely positive matrices whenever whose cp-rank is equal to or larger than the rank, but also on the computational efficiency on the accuracy of the factorization or the computational cost for completely positive matrices in large scales. We will report the results of the numerical experiments to show the advantages and comparisons with other algorithms. 

The remaining part of this paper is organized as follows. In Section \ref{sect:epm}, we show the motivation of the exterior point method and the optimization model of the CPF. Then the first order and second order conditions are discussed. We also discussed some interesting properties of the global optimum. The NCG method and its modifications are discussed in Section \ref{sect:mncg} for solving the problem. In Section \ref{sect:explore}, we report some interesting phenomena of the CPF via a lot of numerical experiments by the modified NCG, which partially explores possible issues resulting in difficult CPF. We also show some improvements for difficult CPF by weak CPF or restart techniques in Section \ref{sect:improvement}. Comparisons with other algorithms are given in Section \ref{sect:comparison} that further show the efficiency of our exterior point method on synthetic completely positive matrices in small or larger scales and some difficult examples reported in the literature. Finally, some remarks are given in the conclusion section. 

\section{The Exterior Point Method}\label{sect:epm}

Let $A$ be a completely positive matrix of order $n$, and let $r = r(A)$ and $r_{cp} = r_{cp}(A)$ be the matrix rank and cp-rank of $A$, respectively. It is easy to estimate the matrix rank but hard for the cp-rank unless $r=1,2$.\footnote{In this special case, $r_{cp} = r$.} Theoretically, the cp-rank could be equal to or much larger than the matrix rank or the matrix order. There are two kinds of estimations on the cp-rank. One was an upper bound of $r_{cp}$ in term of $r$ given in \cite{BB2003}: $r_{cp}\leq \frac{1}{2}(r^2+r-2)$ for $r\geq 2$. The estimation is not tight when $n$ is relatively large. For instance, the upper bound cannot be touched if $A$ is of full rank with $n>5$. This claim can be concluded from the other kind of estimation in terms of matrix order given in \cite{MBB2015}:  $r_{cp}\leq \frac{1}{2}(n^2+n-8)$ if $n\geq 6$, not depending on the rank of $A$. A lower-bound given in \cite{BSU2015} as that $r_{cp}\geq \frac{1}{2}\big(n^2+n-8-(\sqrt{8n}-3)n\big)$ for $n\geq 15$. 
In the literature, it is commonly assumed that $r_{cp}=r$. It makes sense since if we arbitrarily choose $r$ nonnegative $n$-dimensional vectors with $r\leq n$, these vectors are almost always linearly independent, {\it i.e.}, the matrix $B$ of these vectors is of full rank, and hence $A = BB^T$ has rank $r$ almost always. 

In this section, we do not assume that $r_{cp} = r$. Furthermore, we do not ask the column number of $B$ equal to $r_{cp}$ in the CPF that we are going to determined. Similar with that in \cite{HSS2014} and \cite{GD2018}, we look for an orthonormal transformation $B = WQ$ from a full-rank factor $W$ of a symmetric factorization $A = WW^T$. That is, $W$ is of $r$ columns, $B$ has $r_+$ columns with a given integer $r_+ \geq r_{cp}$, and $Q$ is a row-orthonormal matrix of order $r\times r_+$. Such an orthonormal matrix exists according to the following lemma. A similar result was given in \cite{HSS2014}, here we give a much simpler proof.

\begin{lemma}\label{lma:orth T}
Let $A = WW^T$ be a full column-rank and symmetric factorization of $A$, and $r_+ \geq r_{cp}(A)$. Then $A = BB^T$ with a nonnegative $B$ of $r_+$ columns, if and only if there is a row-orthonormal $Q$ of order $r\times r_+$ such that $B = WQ$.
\end{lemma}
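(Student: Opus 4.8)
The plan is to prove both implications directly, the substance lying entirely in the forward direction, which I would handle by writing down an explicit $Q$ built from the Moore--Penrose pseudoinverse of $W$ rather than by extending an orthonormal basis as in \cite{HSS2014}.

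The reverse implication is immediate: if $B = WQ$ with $Q$ row-orthonormal, i.e.\ $QQ^T = I_r$, then $BB^T = W QQ^T W^T = WW^T = A$, so together with $B \ge 0$ and $B$ having $r_+$ columns this is exactly a nonnegative factorization of the prescribed width. For the forward implication, suppose $A = BB^T$ with $B \ge 0$ of $r_+$ columns. Since $W$ has full column rank $r$, the Gram matrix $W^TW$ is invertible, and I would set $Q := (W^TW)^{-1}W^TB$, which is $r\times r_+$, and then verify the two required properties. First, $WQ = B$: the matrix $W(W^TW)^{-1}W^T$ is the orthogonal projector onto $\mathrm{col}(W)$, and since $\mathrm{col}(W) = \mathrm{col}(WW^T) = \mathrm{col}(A) = \mathrm{col}(BB^T) = \mathrm{col}(B)$ (using the standard identity $\mathrm{col}(MM^T) = \mathrm{col}(M)$, which follows from $\ker(MM^T) = \ker(M^T)$, applied to $M = W$ and $M = B$), this projector fixes every column of $B$, hence $WQ = W(W^TW)^{-1}W^TB = B$. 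Second, row-orthonormality follows from $A = WW^T$: $QQ^T = (W^TW)^{-1}W^T BB^T W (W^TW)^{-1} = (W^TW)^{-1}W^T A W (W^TW)^{-1} = (W^TW)^{-1}(W^TW)(W^TW)(W^TW)^{-1} = I_r$. Since $r_+ \ge r_{cp}(A)$ is precisely the condition guaranteeing that such a nonnegative $B$ of width $r_+$ exists, nothing further is needed.

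The step that needs the most care is the column-space identity $\mathrm{col}(W) = \mathrm{col}(B)$, since it is what forces the projector $W(W^TW)^{-1}W^T$ to act as the identity on $B$; everything else is bookkeeping with the invertibility of $W^TW$. It may also be worth recording explicitly that $\mathrm{rank}(B) = \mathrm{rank}(BB^T) = \mathrm{rank}(A) = r$, so that $\mathrm{col}(B)$ is genuinely the $r$-dimensional space $\mathrm{col}(W)$ rather than a proper subspace. All of this is elementary linear algebra once the candidate $Q = (W^TW)^{-1}W^TB$ is written down, which is the simplification over the argument in \cite{HSS2014}.
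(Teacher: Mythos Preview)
Your proof is correct and follows essentially the same approach as the paper. Both arguments hinge on the column-space identity $\mathrm{col}(W)=\mathrm{col}(B)$ together with the full column rank of $W$; the only cosmetic difference is that you write down $Q=(W^TW)^{-1}W^TB$ explicitly and verify $WQ=B$ and $QQ^T=I_r$ by direct computation, whereas the paper first asserts the existence of some $Q$ with $B=WQ$ from the span equality and then substitutes into $BB^T=WW^T$ to cancel $W$ on each side.
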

\proof
The sufficiency is obvious since $QQ^T=I$. For the necessity, the equality $BB^T=WW^T$ implies that $B$ and $W$ have the same rank and $\spann(W)=\spann(B)$. Therefore, $B = WQ$ for a matrix $Q\in\mathbb{R}^{r\times r_+}$. Substituting $B = WQ$ into $BB^T=WW^T$, we get that $QQ^T = I$ since $W$ is of full column rank. The proof is then completed.
$\hfill\square$
\endproof

\subsection{The optimization model for CPF}\label{subsect:OPM}

The alternative projection algorithms mentioned before look for two sequences $\{Q_k\}$ and $\{P_k\}$ in the constrained domains
\[
    \mathbb{Q}=\{Q\in{\mathbb{R}}^{r\times r_+}: QQ^T = I_r\},\quad
    \mathbb{P}=\{P\in{\mathbb R}^{r\times r_+}: WP\geq 0\},
\]
respectively, to pursue an intersect point of the feasible domains, via alternative projection. 
Different from the greedy alternative projection, we consider a new strategy of {\it exterior point iteration} for solving the problem of CPF. The idea is that we look for an iterative sequence $\{X_k\}$ that alive on the exterior of the two subdomains. Each $X_k$ is not orthogonal and $WX_k$ may be not nonnegative. Meanwhile, we hope that the sequence $\{X_k\}$ can get close to both of $\mathbb{Q}$ and $\mathbb{P}$ more and more, and eventually, the sequence $\{X_k\}$ can converge to an intersect point $Q$ of $\mathbb{Q}$ and $\mathbb{P}$ to get a nonnegative factor $B=WQ$ of $A$. 

To this end, let us consider the distances of a given matrix $X$ to $\mathbb{Q}$ and $\mathbb{P}$, respectively,
\[
    d(X,\mathbb{Q}) = \min_{Q\in\mathbb{Q}}\|X-Q\|_F,\quad
    d(X,\mathbb{P}) = \min_{P\in\mathbb{P}}\|X-P\|_F.
\]
The ideal model for the CPF is 
\begin{align}\label{prob:ideal}
    \min_{X\in {\mathbb{R}}^{r\times r_+}} \ \big\{d^2(X,\mathbb{Q})+d^2(X,\mathbb{P})\big\}.
\end{align}
Clearly, $d^2(X,\mathbb{Q})$ has a simple representation 
$d^2(X,\mathbb{Q}) = \sum_k(1-\sigma_k(X))^2$, where $\{\sigma_k(X)\}$ are the singular values of $X$. Unfortunately, there is not a closed form to represent the distance $d(X,\mathbb{P})$ because of the implicit restriction in $\mathbb{P}$. However, $d(X,\mathbb{P})$ can be represented by $\|WX-WP\|_F$ since $\|WX-WP\|_F\leq \|W\|_2\|X-P\|_F$. It has a lower bounded as
\[
    d(X,\mathbb{P})
    \geq \min_{P\in\mathbb{P}}\frac{\|WX-WP\|_F}{\|W\|_2}
    \geq \min_{B\in \mathbb{R}_+^{n\times r_+}}\frac{\|B-WX\|_F}{\|W\|_2}
    = \frac{\|(WX)_-\|_F}{\|W\|_2},
\]
where $(WX)_- = \min\big\{WX, 0\big\}$ is the negative part of $WX$. Therefore, the ideal objective function can be equivalently transformed to 
\begin{align}\label{prob:opt0}
    \min_{X\in {\mathbb{R}}^{r\times r_+}} \ 
    \Big\{\sum_k\big(1-\sigma_k(X)\big)^2 +\frac{\|(WX)_-\|_F^2}{\|W\|_2^2}\Big\}.
\end{align}

However, the above model should be slightly modified in the view of numerical computation since it costs much to evaluate all the singular values. This disadvantage can be addressed by taking into account the equality $\|XX^T-I\|_F^2 = \sum_i(1-\sigma_i^2(X))^2$. Since the global optimal solution is achieved at an $X$ with $\sigma_i = 1$, when $\sigma_i(X)\approx 1$ for all $i$, we see that $\|XX^T-I\|_F^2 = \sum_i(1-\sigma_i(X))^2(1+\sigma_i(X))^2\approx 4\sum_i(1-\sigma_i(X))^2$.
More importantly, it can be evaluated economically. Notice that $\|XX^T-I\|_F^2$ is a quartic function of $X$, while $\|(WX)_-\|_F^2$ is quadratic. We slight modify the coefficients of the two functions in (\ref{prob:opt0}) when the first one is replaced by $\|XX^T-I\|_F^2$, 
\begin{align}\label{prob:epm}
    \min_{X\in {\mathbb{R}}^{r\times r_+}} \ 
    \Big\{ \frac{1}{4}\|XX^T-I\|_F^2 +\frac{\lambda}{2}\|(WX)_-\|_F^2\Big\}.
\end{align}
where $\lambda = 2/\|W\|_2^2$. One may slightly change $\lambda$ if necessary. Numerically, it may be more robust if we rescale the rows of $W$ to have unit norm, {\it i.e.}, replace $W$ by its normalized 
\[
    \tilde W = \diag(\|w_1\|^{-1},\cdots,\|w_n\|^{-1})W
\]
in (\ref{prob:epm}), where $\{w_i\}$ are the rows of $W$. Since $\sqrt{n} = \|\tilde W\|_F\leq \sqrt{r}\|\tilde W\|_2$, $1/\|\tilde W\|_2^2\leq\frac{r}{n}$. Hence, we can simply set $\lambda\approx \frac{2r}{n}$ in this case. In our experiments, we always use this parameter set for $\lambda$ and it works very well. We can set $B = (WX)_+$, the nonnegative part of $WX$, as the required nonnegative factor.

There are some advantages to the problem (\ref{prob:epm}) and its objective function 
\[
    f(X) = \frac{1}{4}\|XX^T-I\|_F^2 +\frac{\lambda}{2}\|(WX)_-\|_F^2.
\]
At first, $X\in \mathbb{Q}\cap\mathbb{P}$ if and only if $f(X) = 0$, {\it i.e.}, it is an optimal solution of (\ref{prob:epm}). Second, because of the square form of the negative entries in the second term, $f$ is derivative. Third, if at a point $X_0$, $WX_0$ does not have zero entries, $f(X)$ is a polynomial function near $X_0$. This property implies a similar behavior as a polynomial function that benefits the analysis of convergence or global optimality. In the next subsection, we will further discuss the local or global optimality.

\subsection{Conditions for local optimum}

Because the objective function $f$ is derivable, it is not difficult to characterize its first order condition for its local optimum. One can verify that
\begin{align*}
	\frac{\partial }{\partial x_{ij}}\big\|(WX)_-\big\|_F^2 
	&= \frac{\partial }{\partial x_{ij}} 
	    \sum_{s=1}^{n}\Big(\sum_{t=1}^{r}w_{st}x_{tj}\Big)_-^2
    = 2\sum_{s=1}^{n}w_{si}\Big(\sum_{t=1}^{r}w_{st}x_{tj}\Big)_-.
\end{align*}
The gradient of $\|(WX)_-\|_F^2$ can be represented as $\nabla \|(WX)_-\|_F^2 = 2W^T(WX)_-$. We also have that $\nabla\|XX^T-I\|_F^2 = 4(XX^T-I)X$. Hence, 
\begin{align}\label{f:der}
    \nabla f(X) = (XX^T-I)X+\lambda W^T(WX)_-.
\end{align}
The first order condition for a local minimum of $f(X)$ follows immediately.

\begin{lemma}\label{lma:KKT}
The first order condition of a (local) minimum of $f$ is 
\begin{align}\label{first order}
(XX^T-I)X + \lambda W^T(WX)_- = 0.
\end{align}
\end{lemma}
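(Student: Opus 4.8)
The statement is precisely Fermat's first‑order necessary condition for the differentiable function $f$, so the plan is short. First I would recall that a local minimizer of a function that is differentiable at that point must be a stationary point, and that the gradient of $f$ has already been computed in (\ref{f:der}). The only step that actually requires care is confirming that $f$ is differentiable \emph{everywhere} on ${\mathbb R}^{r\times r_+}$ — in particular on the ``kink set'' $\{X : WX\text{ has a zero entry}\}$ — because the penalty term is assembled from the negative‑part map, which is itself only piecewise linear.

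To handle this, I would isolate the scalar function $\psi(t)=(\min\{t,0\})^2$, which equals $t^2$ for $t\le 0$ and $0$ for $t\ge 0$. For $t\neq 0$ a one‑line computation on each branch gives $\psi'(t)=2\min\{t,0\}$; at $t=0$ both one‑sided derivatives vanish; and since $t\mapsto 2\min\{t,0\}=2\,t_-$ is continuous, $\psi\in C^1({\mathbb R})$. This is exactly where the squaring in the model pays off: the plain hinge $t\mapsto t_-$ is non‑differentiable at the origin, but its square is $C^1$. Writing $\|(WX)_-\|_F^2=\sum_{s,j}\psi\big((WX)_{sj}\big)$ and applying the chain rule entrywise then reproduces $\nabla\|(WX)_-\|_F^2=2W^T(WX)_-$, which is continuous in $X$; combined with the quartic polynomial $\tfrac14\|XX^T-I\|_F^2$, whose gradient $(XX^T-I)X$ is plainly continuous, this shows $f\in C^1({\mathbb R}^{r\times r_+})$ and justifies the formula (\ref{f:der}).

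With the $C^1$‑smoothness in hand the conclusion is immediate: if $X$ is a local minimizer of $f$, Fermat's rule forces $\nabla f(X)=0$, and substituting (\ref{f:der}) gives $(XX^T-I)X+\lambda W^T(WX)_-=0$, which is (\ref{first order}). There is essentially no hard step here — the only place one could stumble is differentiability at the kink set, and that is settled by the one‑dimensional computation for $\psi$. I would also flag for the reader that (\ref{first order}) is merely \emph{necessary}: it is satisfied by local maxima and saddle points as well — for instance $X=0$ always solves (\ref{first order}) yet is not a minimizer of $f$ — which is precisely the reason the second‑order condition is taken up next.
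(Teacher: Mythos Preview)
Your proposal is correct and follows essentially the same route as the paper: compute the gradient of $f$ and invoke Fermat's rule, with the lemma being an immediate consequence of the formula (\ref{f:der}) derived just above it. You add a careful verification of $C^1$-smoothness at the kink set via the scalar function $\psi(t)=(t_-)^2$, which the paper glosses over, but the underlying argument is the same.
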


Some interesting properties follows for any stationary point of $f$, {\it i.e.}, a matrix $X$ satisfying (\ref{first order}). The first one is that the term $\lambda\|(WX)_-\|_F^2$ and $f$ can be represented in terms of the singular values $\{\sigma_i(X)\}$ of $X$.

\begin{proposition}\label{prop:stationary point}
Let $X$ be a stationary point of $f$ and $\sigma_i = \sigma_i(X)$ for simplicity. Then
\begin{align}\label{f}
    \lambda\|(WX)_-\|_F^2 
%    = \trace\big(X^TX(I-X^TX)\big)
    =\sum_i\sigma_i^2(1-\sigma_i^2),\quad
    f(X) %= \frac{1}{4}\sum_i(1-\sigma_i^2)^2+ \frac{1}{2}\sum_i\sigma_i^2(1-\sigma_i^2)
    = \frac{1}{4}\sum_i(1-\sigma_i^4). 
\end{align}
\end{proposition}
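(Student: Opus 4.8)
The plan is to turn the matrix equation (\ref{first order}) into a scalar identity by pairing it with $X$, which brings the singular values of $X$ into play. First I would multiply (\ref{first order}) on the right by $X^T$ and take traces, obtaining
\[
    \trace\big((XX^T-I)XX^T\big) + \lambda\,\trace\big(W^T(WX)_- X^T\big) = 0.
\]
Setting $M = XX^T$, a symmetric positive semidefinite matrix with eigenvalues $\sigma_i^2$, the first trace equals $\trace\big((M-I)M\big) = \sum_i(\sigma_i^4-\sigma_i^2)$. For the second trace, using the cyclic property of the trace together with $X^TW^T = (WX)^T$ gives $\trace\big(W^T(WX)_- X^T\big) = \trace\big((WX)_-(WX)^T\big) = \sum_{i,j}[(WX)_-]_{ij}\,[WX]_{ij}$, and since at each entry the factor $[(WX)_-]_{ij}$ is either zero or equal to $[WX]_{ij}$, one has $[(WX)_-]_{ij}\,[WX]_{ij} = [(WX)_-]_{ij}^2$, so this trace equals $\|(WX)_-\|_F^2$. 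Substituting back and rearranging yields the first identity in (\ref{f}),
\[
    \lambda\|(WX)_-\|_F^2 = \sum_i(\sigma_i^2-\sigma_i^4) = \sum_i\sigma_i^2(1-\sigma_i^2).
\]

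The second identity then follows by direct substitution into the definition of $f$. Since $\|XX^T-I\|_F^2 = \trace\big((M-I)^2\big) = \sum_i(1-\sigma_i^2)^2$, I would write
\[
    f(X) = \frac14\sum_i(1-\sigma_i^2)^2 + \frac12\sum_i\sigma_i^2(1-\sigma_i^2)
         = \frac14\sum_i\big[(1-\sigma_i^2)^2 + 2\sigma_i^2(1-\sigma_i^2)\big]
         = \frac14\sum_i(1-\sigma_i^4),
\]
which is the claimed formula.

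Essentially all of this is routine algebra with traces and singular values; the one step requiring a moment's thought is the evaluation of $\trace\big(W^T(WX)_- X^T\big)$, namely recognizing that it equals exactly $\|(WX)_-\|_F^2$ because multiplying the negative part of a matrix entrywise by the matrix itself reproduces the square of the negative part. This is precisely what lets the $W$-dependent quantity be expressed solely in terms of the singular values of $X$ once the stationarity condition is imposed, and everything else is bookkeeping with the identity $\|XX^T-I\|_F^2 = \sum_i(1-\sigma_i^2)^2$.
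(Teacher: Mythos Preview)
Your proof is correct and follows essentially the same route as the paper: both pair the stationarity equation with $X$ via a trace, use that $\trace\big((WX)^T(WX)_-\big)=\|(WX)_-\|_F^2$, and then read off the singular-value formulas. The only cosmetic difference is that the paper writes the first trace as $\trace\big(X^T(I-XX^T)X\big)$ rather than your $\trace\big((XX^T-I)XX^T\big)$, which are the same up to a sign and a cyclic permutation.
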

\begin{proof}
Since $\trace(X^T(I-XX^T)X) = \trace(X^TX(I-X^TX)) = \sum_i\sigma_i^2(1-\sigma_i^2)$, 
\[
    \lambda\|(WX)_-\|_F^2 = \trace(\lambda (WX)^T(WX)_- )
    = \trace(X^T(I-XX^T)X) = \sum\sigma_i^2(1-\sigma_i^2).
\]
This is the first inequality in (\ref{f}). The second one follows it with the definition of $f$ and $\|XX^T-I\|_F^2 = \sum_i(1-\sigma_i^2)^2$ and  $(1-\sigma_i^2)^2+2\sigma_i^2(1-\sigma_i^2)=1-\sigma_i^4$.
$\hfill\square$
\end{proof}

Obviously, all $\sigma_i(X)=1$ if $X$ is globally optimal. One can also expect $\sigma_i(X)\approx 1$ even if $X$ is just a locally optimal or stationary point since $\lambda\|(WX)_-\|_F^2>0$ and 
\[
    \sum_{\sigma_i>1}\sigma_i^2(\sigma_i^2-1)
    = \sum_{\sigma_i<1}\sigma_i^2(1-\sigma_i^2)
    -\lambda\|(WX)_-\|_F^2.
\]
Practically, one can conclude form (\ref{first order}) that some
$\sigma_i(X)=1$ if $WX$ has partial nonnegative columns because of the symmetry of $\lambda (WX)^T(WX)_- = X^T(I-XX^T)X$. If $(WX)_+$ is the nonnegative part of $WX$, the symmetry gives
\begin{align}\label{WX}
    (WX)_+^T(WX)_- = (WX)_-^T(WX)_+.
\end{align}

\begin{proposition}\label{prop:X_12}
For a stationary point $X$ of $f$, if $WX$ has a nonnegative submatrix $WX_1$ and the remaining $WX_2$ does not, then there are at least $r(X_1)$ singular values $\sigma_i(X)=1$, and
\[
    (WX_1)^T(WX_2)_-= 0, \quad (WX_1)^T(WX_2)\geq 0.
\]
\end{proposition}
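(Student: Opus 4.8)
The plan is to exploit the symmetry relation $(WX)_+^T(WX)_- = (WX)_-^T(WX)_+$ established in equation (\ref{WX}), partitioned according to the split $WX = [\,WX_1 \mid WX_2\,]$, where by hypothesis $(WX_1)_- = 0$ and hence $(WX_1)_+ = WX_1$. First I would write out the block structure of the identity $(WX)_+^T(WX)_- = (WX)_-^T(WX)_+$ in the $2\times 2$ block form induced by the partition of the columns into the $X_1$-part and the $X_2$-part. The off-diagonal block in the $(1,2)$ position reads $(WX_1)^T(WX_2)_- = \big((WX_1)_-\big)^T(WX_2)_+ = 0$, since $(WX_1)_- = 0$. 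This immediately gives the first asserted equality $(WX_1)^T(WX_2)_- = 0$, and then $(WX_1)^T(WX_2) = (WX_1)^T\big((WX_2)_+ + (WX_2)_-\big) = (WX_1)^T(WX_2)_+ \geq 0$, because a product of two nonnegative matrices is nonnegative. So the two displayed relations fall out almost immediately from the block form of (\ref{WX}).

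The remaining, and slightly more substantive, claim is that at least $r(X_1)$ of the singular values of $X$ equal $1$. Here I would use the stationarity identity in the form $\lambda (WX)^T(WX)_- = X^T(I - XX^T)X$ (which is (\ref{first order}) left-multiplied by $X^T$, rearranged). Restricting to the columns of $X_1$: the corresponding block of the left-hand side is $\lambda (WX_1)^T(WX)_-$, whose first sub-block $(WX_1)^T(WX_1)_- = 0$ and whose second sub-block $(WX_1)^T(WX_2)_- = 0$ by what was just shown; hence $\lambda (WX_1)^T(WX)_- = 0$ entirely, which forces the mixed block $X_1^T(I - XX^T)X = \begin{bmatrix} X_1^T(I-XX^T)X_1 & X_1^T(I-XX^T)X_2\end{bmatrix}$ — or rather the relevant part — to vanish. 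In particular $X_1^T(I - XX^T)X_1 = 0$.

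From $X_1^T(I - XX^T)X_1 = 0$ I would conclude that $X_1^T(I - X_1X_1^T)X_1 = X_1^T(I-XX^T)X_1 + X_1^TX_2X_2^TX_1$; since both terms on the right are positive semidefinite (the first is zero, the second is a Gram matrix), we actually need a cleaner route: the cleanest is to observe that $0 = \trace\big(X_1^T(I-XX^T)X_1\big) = \trace\big((I-XX^T)^{1/2}X_1 X_1^T (I-XX^T)^{1/2}\big)$ is not immediately available because $I - XX^T$ need not be positive semidefinite. Instead, I would argue on the range: $X_1^T(I-XX^T)X_1 = 0$ means $\mathrm{Range}(X_1)$ is an isotropic-type subspace for the symmetric form $I - XX^T$; combined with $X_1^TX_1 \preceq X^TX$ and the fact that on $\mathrm{Range}(X^T)$ the form $I - XX^T$... — the precise bookkeeping here, relating the rank of $X_1$ to the multiplicity of the singular value $1$ via the subspace $\spann(X_1)$ and the eigenspace of $XX^T$ at eigenvalue $1$, is the step I expect to be the main obstacle. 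The idea is that $(I-XX^T)$ acts on the column space of $W X$-related vectors, and that the vanishing of the quadratic form on an $r(X_1)$-dimensional subspace, together with the sign structure of the eigenvalues of $I - XX^T$ and Proposition \ref{prop:stationary point}, pins down at least $r(X_1)$ unit singular values; carrying this out carefully — e.g.\ by a Courant–Fischer / inertia argument on $XX^T$ restricted to an appropriate invariant subspace containing $\mathrm{Range}(X_1)$ — is where the real work lies, and I would want to double-check whether an auxiliary nonnegativity (coming from $\sigma_i \le 1$ behavior at stationary points, or from $X_1^TX_2 = 0$ which might also be derivable) is needed to make the inertia count tight.
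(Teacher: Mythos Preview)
Your treatment of the two displayed relations via the block form of (\ref{WX}) is correct and is exactly what the paper does.

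The gap is in the singular-value claim. By left-multiplying the stationarity condition by $X^T$ before restricting to the $X_1$-columns, you have discarded information and are left with only the quadratic relation $X_1^T(I-XX^T)X_1=0$; this is why you are forced into an inertia argument that you cannot close (and indeed, from that quadratic relation alone the conclusion does not follow, since $I-XX^T$ is indefinite in general). The paper instead applies the first-order condition (\ref{first order}) directly, column by column, to $X_1$: the columns of $(WX)_-$ corresponding to $X_1$ are zero by hypothesis, so $(XX^T-I)X_1=0$ with no further work. This says precisely that the column space of $X_1$ lies in the eigenspace of $XX^T$ for the eigenvalue $1$, which immediately gives at least $r(X_1)$ singular values of $X$ equal to $1$. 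No Courant--Fischer or sign bookkeeping is needed.
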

\begin{proof}
By (\ref{first order}), $(XX^T-I_r)X_1=0$. Hence, $XX^T$ has at least $r_1=r(X_1)$ eigenvalues equal to one. It follows that $X$ has at least $r_1$ singular values equal to one. 
By (\ref{WX}),
\[
    (WX_1)_+^T(WX_2)_-=(WX_1)_-^T(WX_2)_+=0.
\]
It follows that $(WX_1)^T(WX_2)=(WX_1)_+^T(WX_2)=(WX_1)_+^T(WX_2)_+\geq0$.
$\hfill\square$
\end{proof}

The orthogonality between $WX_1$ and $(WX_2)_-$ implies that the rows of $WX_2$ corresponding to the nonzero rows of $WX_1$ must be nonnegative. Hence, if $WX$ has a positive column, $WX$ itself must be nonnegative. If $WX_2$ exists, $WX_1$ must have zero rows, and we can partition $WX$ as $WX=\left[\begin{array}{cc}
     W_1X_1 & W_1X_2 \\
     0 & W_2X_2
\end{array}\right]$ within permutation, where both $W_1X_1$ and $W_1X_2$ are nonnegative and all the rows of $W_1X_1$ are nonzero.

A second order condition can guarantee a stationary point to be local optimal. For our problem, it is not difficult to give the second order condition. 

\begin{theorem}\label{thm:second order}
Assume that $X$ is a stationary point of $f$, and $t_j$ is the indicator of negative components of the column $x_j$ of $X$.
If the symmetric matrix $S = (S_{ij})$, partitioned with blocks %$S = D+H$ with
\begin{align}\label{S}
    S_{ij} = \left\{
    \begin{array}{ll}
         \|x_i\|^2I+x_ix_i^T+W^T\diag(t_i)W+(XX^T-I),& i=j; \\
         (x_i^Tx_j)I+x_jx_i^T,& i\neq j,
    \end{array}
    \right.
\end{align}
is positive definite, or positive semidefinite with a null space spanned by vectors linked by all columns of $\Delta$ satisfying $\Delta X^T+X\Delta^T=0$ and the nonzero eigenvalues of $S$ are not smaller than $\|X\|_2^2$, then $X$ is a local minimizer of $f$.
\end{theorem}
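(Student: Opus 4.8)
The plan is to carry out a standard second-order analysis of $f$ at the stationary point $X$, i.e.\ to compute the Hessian quadratic form $\langle \Delta, \nabla^2 f(X)\Delta\rangle$ for an arbitrary direction $\Delta\in\mathbb{R}^{r\times r_+}$, show that it coincides with a quadratic form built from the block matrix $S$, and then invoke the sufficient second-order condition for a local minimum. The only delicate point is that $f$ is merely once differentiable — the second term $\frac{\lambda}{2}\|(WX)_-\|_F^2$ is only piecewise quadratic, with the kink locus being the set where some entry of $WX$ vanishes — so I cannot simply write ``$\nabla^2 f(X)\succeq 0$'' and be done. I will handle this by treating the smooth part $g(X)=\frac14\|XX^T-I\|_F^2$ exactly and the nonsmooth part $h(X)=\frac{\lambda}{2}\|(WX)_-\|_F^2$ via a one-sided (directional) second-order Taylor expansion.

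First I would expand $g$. Writing $g(X+\Delta)$ and collecting terms up to order two in $\Delta$, the quadratic part is $\frac14\big(\,2\|X\Delta^T+\Delta X^T\|_F^2/ \text{?}\,\big)$ — more precisely, using $\nabla g(X)=(XX^T-I)X$ and differentiating once more, the Hessian form is
\[
 \big\langle \Delta,\nabla^2 g(X)\Delta\big\rangle
 = \tfrac12\|X\Delta^T+\Delta X^T\|_F^2 + \big\langle XX^T-I,\ \Delta\Delta^T\big\rangle ,
\]
which, after expanding both terms entrywise and matching the block pattern, is exactly $\sum_{i,j}\Delta_i^T\big(\text{blocks from }\|x_i\|^2I+x_ix_i^T\text{, }x_i^Tx_j I + x_j x_i^T\text{, and }(XX^T-I)\delta_{ij}\big)\Delta_j$; here $\Delta_j$ is the $j$-th column of $\Delta$ (a vector in $\mathbb{R}^r$), and the $(i,j)$ blocks one reads off are precisely the $(XX^T-I)$, $x_ix_i^T+\|x_i\|^2I$, and $x_jx_i^T+(x_i^Tx_j)I$ pieces of \eqref{S}. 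Next I would expand $h$: for a fixed column $x_j$ with negativity indicator $t_j$, near $X$ (but allowing the kink) one has $\|(W(x_j+\delta_j))_-\|^2 \ge \|(Wx_j)_-\|^2 + 2\big\langle W^T(Wx_j)_-,\delta_j\big\rangle + \delta_j^T W^T\diag(t_j)W\,\delta_j$, because adding more active (negative) coordinates can only increase the quadratic lower bound; summing over $j$ and multiplying by $\frac{\lambda}{2}$ contributes the $W^T\diag(t_i)W$ block on the diagonal, and the first-order terms of $g$ and $h$ together vanish by the stationarity equation \eqref{first order}.

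Combining, for every $\Delta$ there is a one-sided estimate
\[
 f(X+\Delta)\ \ge\ f(X) + \tfrac12\big\langle \Delta, S\,\Delta\big\rangle + o(\|\Delta\|^2),
\]
where $\langle\Delta,S\Delta\rangle$ denotes the block quadratic form with the $S_{ij}$ of \eqref{S}. Wait — the ``$o(\|\Delta\|^2)$'' here is really the higher-order remainder of $g$, which is genuinely $O(\|\Delta\|^3)+O(\|\Delta\|^4)$, so in the positive-definite case $f(X+\Delta)>f(X)$ for all small $\Delta\neq0$ and $X$ is a strict local minimizer. In the positive-semidefinite case I must rule out the kernel directions: if $S\Delta\neq$ is trivial on $\Delta$ with $S\Delta=0$, then such $\Delta$ must (by the structure of the diagonal blocks, in particular the $W^T\diag(t_i)W\succeq0$ and $\|x_i\|^2I+x_ix_i^T\succ0$ parts) force $\Delta X^T+X\Delta^T=0$; along exactly those directions the quadratic part of $g$ degenerates to $\langle XX^T-I,\Delta\Delta^T\rangle$, and here the hypothesis that the nonzero eigenvalues of $S$ are $\ge\|X\|_2^2$ is what lets me absorb the (possibly negative) term $\langle XX^T-I,\Delta\Delta^T\rangle$, whose eigenvalues lie in $[-1,\|X\|_2^2-1]$, against the strictly positive contribution from the $S$-nondegenerate component of $\Delta$, plus the genuinely positive quartic term $\frac14\|\Delta\Delta^T\|_F^2$ that survives from $g$ even when $\Delta X^T+X\Delta^T=0$.

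The main obstacle, and the step I expect to require the most care, is this last bookkeeping in the semidefinite case: decomposing an arbitrary small $\Delta$ into its component in $\ker S$ and its orthogonal complement, tracking how the cubic cross term and the quartic term of $g$ interact with the eigenvalue bound $\|X\|_2^2$, and showing the sum stays nonnegative for all sufficiently small $\Delta$. The smooth-part expansion and the entrywise matching of blocks against \eqref{S} are routine; the one-sided estimate for the nonsmooth term is conceptually the subtle point but is short once phrased via ``activating more indices only raises the lower bound.''
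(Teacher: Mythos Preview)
Your plan matches the paper's proof: lower-bound $f$ by the smooth surrogate $f_-$ obtained by freezing the negativity pattern $T_-$ of $WX$ (your one-sided inequality for each column is precisely the paper's $f(\tilde X)\ge f_-(\tilde X)$, the extra piece being $\|(W\Delta)_-\odot T_0\|_F^2\ge 0$), expand $f_-$ exactly through fourth order in $\Delta$, and identify the second-order part with the block quadratic form $\delta^TS\delta$. The positive-definite case then follows as you say.

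One correction for the semidefinite case, where your description goes astray. The term you single out for absorption, $\langle XX^T-I,\Delta\Delta^T\rangle$, is already part of $\delta^TS\delta$; along a pure kernel direction the \emph{entire} quadratic part vanishes by definition, so nothing negative survives at order two. The genuine obstruction is the \emph{cubic} remainder $\tfrac12\langle \Delta X^T+X\Delta^T,\Delta\Delta^T\rangle$ from $g$. Here the hypothesis that kernel eigenvectors satisfy $\Delta^{(k)}X^T+X(\Delta^{(k)})^T=0$ (note: this is an \emph{assumption} of the theorem, not a consequence of the block structure of $S$ as you suggest) is exactly what makes the argument work: writing $\Delta=\sum_k\alpha_k\Delta^{(k)}$, only the non-kernel coefficients $k>m$ contribute to $\Delta X^T+X\Delta^T$, so the cubic term is $\sum_{k>m}\alpha_k\langle \Delta^{(k)}X^T,\Delta\Delta^T\rangle$. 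Cauchy--Schwarz and an AM--GM split then bound it by $\sum_{k>m}\alpha_k^2\|X\|_2^2+\tfrac14\|\Delta\Delta^T\|_F^2$; the quartic $\tfrac14\|\Delta\Delta^T\|_F^2$ absorbs the second piece, and the eigenvalue hypothesis $\lambda_k\ge\|X\|_2^2$ lets $\sum_{k>m}\alpha_k^2\lambda_k$ absorb the first, leaving $f_-(\tilde X)-f_-(X)\ge 0$. This is the precise bookkeeping you anticipated needing.
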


The proof is given in Appendix B. Because $f$ is not convex, local optimum happens when we solve the problem (\ref{prob:epm}) iteratively. 
Hence, it is important to check whether an iterative sequence goes to a global optimum or not. To address this issue, we will further discuss sufficient conditions for the global optimum of $f$, focusing on descending sequences $\{f(X_k)\}$ that may be generated via some iteration algorithms, as the modified NCG that we will adopt to solve the problem (\ref{prob:epm}).

\subsection{Global optimum}

We fist give some equivalent conditions for the global optimum of $f$, based on Proposition \ref{prop:stationary point} and  Proposition \ref{prop:X_12}.

\begin{proposition}\label{prop:global opt}
The following statements are equivalent for a stationary point $X$ of $f$.

(a) $X\in\mathbb P$ and $X$ is of full row rank;

(b) $X\in\mathbb Q$, {\it i.e.}, $XX^T=I_r$;

(c) $X\in\mathbb Q\cap\mathbb P$, {\it i.e.}, $X$ is a global minimizer of $f$.
\end{proposition}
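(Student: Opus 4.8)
The plan is to establish the cycle of implications $(b)\Rightarrow(c)\Rightarrow(a)\Rightarrow(b)$, using only the first order condition (\ref{first order}) together with the identities of Proposition \ref{prop:stationary point} (and, if one prefers, the degenerate case of Proposition \ref{prop:X_12}).

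For $(b)\Rightarrow(c)$: if $XX^T=I_r$ then every singular value satisfies $\sigma_i(X)=1$, so the first identity in (\ref{f}) gives $\lambda\|(WX)_-\|_F^2=\sum_i\sigma_i^2(1-\sigma_i^2)=0$; since $\lambda>0$, this forces $(WX)_-=0$, i.e. $WX\ge 0$ and hence $X\in\mathbb P$. Combined with $X\in\mathbb Q$ we get $X\in\mathbb Q\cap\mathbb P$, and then $f(X)=\tfrac14\sum_i(1-\sigma_i^4)=0$, which is the minimum value of the nonnegative objective, so $X$ is a global minimizer. The implication $(c)\Rightarrow(a)$ is immediate: $X\in\mathbb Q\cap\mathbb P$ in particular lies in $\mathbb P$, and $XX^T=I_r$ forces $X$ (a matrix in $\mathbb R^{r\times r_+}$) to have full row rank $r$.

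The only implication that asks for an argument is $(a)\Rightarrow(b)$, and even this is short. Since $X\in\mathbb P$ we have $(WX)_-=0$, so (\ref{first order}) collapses to $(XX^T-I_r)X=0$. Because $X$ has full row rank $r$, the linear map $v\mapsto v^{T}X$ on $\mathbb R^r$ is injective, hence every row of the $r\times r$ matrix $XX^T-I_r$ vanishes, i.e. $XX^T=I_r$. Equivalently, via an SVD $X=U\Sigma V^{T}$ with $U$ orthogonal, $\Sigma\in\mathbb R^{r\times r}$ diagonal and (by full rank) invertible, and $V$ having orthonormal columns, the relation $(\Sigma^2-I)\Sigma=0$ forces $\Sigma=I$. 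A third route, matching the remark preceding the proposition, is to apply Proposition \ref{prop:X_12} in the degenerate case $X_1=X$ with no $X_2$: $WX\ge 0$ produces at least $r(X_1)=r$ singular values equal to one, but $X$ has only $r$ singular values in all, so all of them equal one and $XX^T=I_r$.

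I do not expect a real obstacle in this proof; the one point deserving care is the dimension count in $(a)\Rightarrow(b)$ — that a full-row-rank $X\in\mathbb R^{r\times r_+}$ has exactly $r$ singular values, so pinning $r$ of them to the value $1$ already yields $XX^T=I_r$ — after which everything reduces to the algebra already recorded in Propositions \ref{prop:stationary point} and \ref{prop:X_12}. This proposition will then serve as the basis for the subsequent discussion of when a descending iteration is driven toward a global optimum.
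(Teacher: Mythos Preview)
Your proof is correct and follows essentially the same approach as the paper: the paper handles $(a)\Rightarrow(b)$ via the degenerate case $X_1=X$ of Proposition~\ref{prop:X_12} (your third route) and $(b)\Rightarrow(a)$ via Proposition~\ref{prop:stationary point}, exactly as you do. Your first route for $(a)\Rightarrow(b)$, reading $XX^T=I_r$ directly off $(XX^T-I_r)X=0$ using the full row rank of $X$, is in fact slightly more direct than the paper's, but the underlying logic is the same.
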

\begin{proof}
The proof is simple. In fact, if $X\in\mathbb P$, {\it i.e.}, $X_1 = X$ in Proposition \ref{prop:X_12}, then all the singular values of $X$ are equal to one since $r(X) = r$, which implies that $XX^T=I_r$, {\it i.e.}, $X\in\mathbb Q$.
Conversely, if $X\in\mathbb Q$, we have $(WX)_-=0$, {\it i.e.}, $X\in\mathbb P$, from Proposition \ref{prop:stationary point} directly.
$\hfill\square$
\end{proof}

The full-rank condition on a stationary point $X$ is satisfied generally. Practically, if $f(X)< 1/4$, $X$ must be full rank by (\ref{f}). If an algorithm generates the iterative sequence $\{X_k\}$ with descending $\{f(X_k)\}$, starting with an row-orthonormal matrix, it is highly possible to have $f(X)< 1/4$ for any accumulative point $X$ of $\{X_k\}$. Hence, $X\in\mathbb P$ is equivalent to $X\in\mathbb Q$ for an accumulative point $X$ of the sequence generally. 

Proposition \ref{prop:global opt} also show that any stationary point $X$ of $f$ always locates the exterior of both $\mathbb{Q}$ and $\mathbb{P}$ except it is a solution of (\ref{prob:epm}). In Section \ref{sect:mncg}, we will give an iterative algorithm that yields a sequence of iterative points on the exterior of $\mathbb{Q}\cup\mathbb{P}$ unless it converges globally. This is why we call it as an exterior point method, quite different from the alternative methods discussed in the previous subsection, where iterative points alternatively drop into one of the subdomains.

Let us consider those stationary points of $f$ on the exterior of $\mathbb Q\cup\mathbb P$. Each $X$ of them has an indicator matrix $\sgn(|(WX)_-|)$. However, the number of different ones $\{T_\ell\}$ is limit.\footnote{Different stationary points may share a common indicator matrix.} Let $\{X^{(\ell)}\}$ be the stationary points with $\sgn(|(WX^{(\ell)})_-|)=T_\ell$, and let
\[
    f_\ell(X) = \frac{1}{4}\|XX^T-I\|_F^2 +\frac{\lambda}{2}\|(WX)\odot T_\ell\|_F^2.
\]
Each $f_\ell$ has a finite number of different critical values since it is a polynomial \cite{SH2016}. 
As shown in the proof of Theorem \ref{thm:second order} given in Appendix B, each critical value of $f$ is also a critical value of one of $f_{\ell}$'s. Hence, $f$ also has a finite number of critical values.  Let $f_{\min}$ be the smallest one of these nonzero critical values of $f$. The following theorem shows that a descending sequence $\{f(X_k)\}$ yields the optimum of $f$ if there is an $f(X_k)<f_{\min}$.

\begin{theorem}\label{thm:f_min}
Let $\{f(X_k)\}$ be a descending sequence and $\lim_k\nabla f(X_k)=0$. If there is an $f(X_k)<f_{\min}$, then $\lim_k f(X_k) = 0$.
\end{theorem}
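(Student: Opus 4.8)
The plan is to argue by contradiction, exploiting the fact that $f$ has only finitely many critical values together with the fact that $\nabla f(X_k)\to 0$ forces any accumulation point of $\{X_k\}$ to be a stationary point. Suppose $\lim_k f(X_k) = c > 0$ (the limit exists because $\{f(X_k)\}$ is descending and bounded below by $0$). Since by hypothesis some $f(X_{k_0}) < f_{\min}$ and the sequence is descending, we have $0 < c \le f(X_{k_0}) < f_{\min}$, so $c$ is strictly smaller than the smallest nonzero critical value of $f$.

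The key step is to show that $c$ must itself be a critical value of $f$, which is the desired contradiction since $0 < c < f_{\min}$ and $f_{\min}$ is the smallest \emph{nonzero} critical value. First I would establish that the sublevel set $\{X : f(X) \le f(X_{k_0})\}$ is bounded: from Proposition~\ref{prop:stationary point}'s companion identity $\tfrac14\|XX^T-I\|_F^2 \le f(X)$, a bound on $f(X)$ bounds $\|XX^T - I\|_F$ and hence bounds all singular values of $X$, so the tail of $\{X_k\}$ lies in a compact set and has a convergent subsequence $X_{k_j} \to X_*$. By continuity of $\nabla f$ and $\nabla f(X_{k_j}) \to 0$, the limit $X_*$ is a stationary point of $f$, and by continuity of $f$, $f(X_*) = c$. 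But every critical value of $f$ is nonzero only if it is at least $f_{\min}$ (by definition of $f_{\min}$), and $c < f_{\min}$, so the only remaining possibility is $c = 0$ — contradicting $c > 0$. Hence $\lim_k f(X_k) = 0$.

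The main obstacle is making rigorous the claim that the critical values of $f$ form a discrete set bounded away from $0$ — i.e., that $f_{\min}$ is well-defined and that $c$ being a critical value with $0 < c < f_{\min}$ is impossible. This is exactly the content sketched just before the theorem statement: each indicator matrix $T_\ell$ gives a polynomial $f_\ell$, which has finitely many critical values by \cite{SH2016}, and (as shown in the proof of Theorem~\ref{thm:second order}) every critical value of $f$ coincides with a critical value of some $f_\ell$; taking the union over the finitely many $\ell$ gives a finite set of critical values for $f$, whose smallest nonzero element is $f_{\min}$. I would invoke this directly. A minor technical point to handle carefully is that $X_*$, being a stationary point of $f$, has an indicator matrix $\sgn(|(WX_*)_-|) = T_{\ell}$ for some $\ell$, and near $X_*$ the function $f$ agrees with $f_\ell$ (on the set where $WX$ has no new zero entries appearing), so $\nabla f(X_*) = 0$ indeed makes $X_*$ a critical point of $f_\ell$ with critical value $f_\ell(X_*) = f(X_*) = c$; thus $c$ lies in the finite set of critical values of $f$. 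Combining with $0 < c < f_{\min}$ yields the contradiction and completes the proof.
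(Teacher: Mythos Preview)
Your proposal is correct and follows essentially the same approach as the paper's proof: extract a convergent subsequence (using boundedness of the sublevel set), observe that its limit is a stationary point since $\nabla f(X_{k_j})\to 0$, and conclude that the limiting value is a critical value of $f$ strictly below $f_{\min}$, hence zero. The paper's version is terser (it simply asserts boundedness as ``obvious'' and argues directly rather than by contradiction), while you spell out the boundedness via the inequality $\tfrac14\|XX^T-I\|_F^2\le f(X)$ and rehearse the background on finiteness of critical values, but the substance is identical.
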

\begin{proof}
Obviously, $\{f(X_k)\}$ converges and $\{X_k\}$ is bounded. Let $X_{\infty}$ be any accumulative point of $\{X_k\}$ and subsequence $X_{k_j}\to X_{\infty}$. Then $\nabla f(X_{\infty})=\lim \nabla f(X_{k_j}) =0$. That is, $f(X_{\infty})$ is a critical value of $f$. Since $f(X_{\infty})<f(X_k)<f_{\min}$, we conclude that $f(X_{\infty})=0$. 
$\hfill\square$
\end{proof}

Theorem \ref{thm:f_min} shows that any accumulative point of $\{X_k\}$ is a global minimizer of $f$ if there is an $f(X_k)<f_{\min}$.  It is theoretically meaningful, but the condition is not detectable since $f_{\min}$ is unknown. The following theorem gives sufficient conditions for the global optimum.

\begin{theorem}
Let $\{f(X_k)\}$ be a descending sequence such that $\lim_k\nabla f(X_k)=0$. If $\varlimsup_k\|X_k\|_2 \leq 1$, $f(X_k)<\frac{\alpha^2}{4}$ for a constant $\alpha\in(0,1)$, and for sufficiently large $k$,
\begin{align}\label{cond:WX_k}
    \lambda\|(WX_k)_-\|_F^2
    \leq \alpha\|X_kX_k^T-I\|_F - \|X_kX_k^T-I\|_F^2,
\end{align}
then any accumulative point $X_\infty$ of $\{X_k\}$ is a globally optimal solution.
\end{theorem}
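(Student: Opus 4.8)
The plan is to show that under the stated hypotheses, every accumulative point $X_\infty$ of $\{X_k\}$ is a stationary point lying in $\mathbb{P}$ and of full row rank, so that Proposition \ref{prop:global opt}(a) applies. The first observation is standard: $\{f(X_k)\}$ is descending and bounded below by $0$, hence convergent, and since $\frac14\|X_kX_k^T-I\|_F^2\le f(X_k)<\frac{\alpha^2}{4}$ the sequence $\{X_k\}$ is bounded; let $X_{k_j}\to X_\infty$. Because $\lim_k\nabla f(X_k)=0$ and $\nabla f$ is continuous, $\nabla f(X_\infty)=0$, so $X_\infty$ is a stationary point of $f$. Moreover $f(X_\infty)<\frac{\alpha^2}{4}\le\frac14$, so by the representation $f(X_\infty)=\frac14\sum_i(1-\sigma_i^4)$ from Proposition \ref{prop:stationary point}, no $\sigma_i(X_\infty)$ can be $0$; hence $X_\infty$ is of full row rank. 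It remains to show $X_\infty\in\mathbb{P}$, equivalently $(WX_\infty)_-=0$.

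For this I would pass the key inequality (\ref{cond:WX_k}) to the limit. Writing $e_k=\|X_kX_k^T-I\|_F$, the hypothesis says $\lambda\|(WX_k)_-\|_F^2\le \alpha e_k-e_k^2$ for all large $k$; taking the limit along $k_j$ gives $\lambda\|(WX_\infty)_-\|_F^2\le \alpha e_\infty-e_\infty^2$ where $e_\infty=\|X_\infty X_\infty^T-I\|_F$. Now I use the stationarity of $X_\infty$: by Proposition \ref{prop:stationary point}, $\lambda\|(WX_\infty)_-\|_F^2=\sum_i\sigma_i^2(1-\sigma_i^2)$ and $e_\infty^2=\sum_i(1-\sigma_i^2)^2$. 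The constraint $\varlimsup_k\|X_k\|_2\le 1$ forces $\|X_\infty\|_2\le 1$, i.e. $\sigma_i=\sigma_i(X_\infty)\le 1$ for every $i$, so every term $\sigma_i^2(1-\sigma_i^2)$ is nonnegative and in fact $\lambda\|(WX_\infty)_-\|_F^2\ge 0$ with $\sum_i\sigma_i^2(1-\sigma_i^2)\ge \sum_i\sigma_i^2(1-\sigma_i^2)(1+\sigma_i^2)/2$ type comparisons are available because $1\le 1+\sigma_i^2\le 2$.

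The crux is to combine these into a contradiction unless $e_\infty=0$. Using $\sigma_i\le 1$ we have, on one hand, $e_\infty^2=\sum_i(1-\sigma_i^2)^2\le\sum_i(1-\sigma_i^2)=:s$ and, on the other, $\lambda\|(WX_\infty)_-\|_F^2=\sum_i\sigma_i^2(1-\sigma_i^2)=s-\sum_i(1-\sigma_i^2)^2 \cdot(\text{something})$; more directly, $\sum_i\sigma_i^2(1-\sigma_i^2)=\sum_i(1-\sigma_i^2)-\sum_i(1-\sigma_i^2)^2 \ge s - e_\infty^2$ is false in general, so instead I would exploit $\sigma_i^2\ge 1-(1-\sigma_i^2)$ to get $\sigma_i^2(1-\sigma_i^2)\ge (1-\sigma_i^2)-(1-\sigma_i^2)^2$, hence $\lambda\|(WX_\infty)_-\|_F^2\ge s-e_\infty^2$. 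Also $e_\infty=\|XX^T-I\|_F\ge \frac{1}{\sqrt r}\,\|XX^T-I\|_*\ge \frac{1}{\sqrt r}\,s$ (trace norm dominates), but I will instead bound directly: since $(1-\sigma_i^2)\in[0,1]$, $s=\sum_i(1-\sigma_i^2)\le \sqrt r\sqrt{\sum_i(1-\sigma_i^2)^2}=\sqrt r\,e_\infty$. Substituting into the limiting inequality: $s-e_\infty^2\le \lambda\|(WX_\infty)_-\|_F^2\le \alpha e_\infty-e_\infty^2$, so $s\le\alpha e_\infty$, while $f(X_\infty)<\alpha^2/4$ combined with $f(X_\infty)=\frac14\sum_i(1-\sigma_i^4)\ge \frac14\sum_i(1-\sigma_i^2)= \frac14 s$ gives $s<\alpha^2$; feeding $s\le\alpha e_\infty$ with $e_\infty\le\sqrt s$ (since $e_\infty^2=\sum(1-\sigma_i^2)^2\le \sum(1-\sigma_i^2)=s$) yields $s\le\alpha\sqrt s$, i.e. $\sqrt s\le\alpha<1$; then $e_\infty\le\sqrt s\le\alpha$ and $s-e_\infty^2\le \alpha e_\infty-e_\infty^2$ forces $s\le\alpha e_\infty\le\alpha\sqrt s$ once more, and iterating (or a single sharper estimate $s\le\alpha e_\infty$ together with $e_\infty^2\le s\le \alpha e_\infty$ giving $e_\infty\le\alpha$, hence $\lambda\|(WX_\infty)_-\|_F^2\le\alpha e_\infty-e_\infty^2\le\alpha^2-e_\infty^2$) pins down, after plugging back $\lambda\|(WX_\infty)_-\|_F^2=s-\sum_i(1-\sigma_i^2)^2+\dots$, that the only consistent value is $e_\infty=0$, whence $XX^T=I$ and $X_\infty\in\mathbb{Q}\cap\mathbb{P}$ by Proposition \ref{prop:global opt}. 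The main obstacle I anticipate is the bookkeeping of these interlocking scalar inequalities in $s$ and $e_\infty$; the cleanest route is probably to set $u_i=1-\sigma_i^2\in[0,1]$, write everything as $\lambda\|(WX_\infty)_-\|^2=\sum u_i-\sum u_i^2$ wait—$\sigma_i^2(1-\sigma_i^2)=(1-u_i)u_i=u_i-u_i^2$, $e_\infty^2=\sum u_i^2$, and then the limiting inequality becomes $\sum(u_i-u_i^2)\le \alpha\sqrt{\sum u_i^2}-\sum u_i^2$, i.e. $\sum u_i\le\alpha\sqrt{\sum u_i^2}\le\alpha\sqrt{\sum u_i}$ using $u_i\le1$, forcing $\sum u_i\le\alpha^2<1$ and then, since $\sqrt{\sum u_i^2}\le\sqrt{\sum u_i}\le\sum u_i^{?}$... the decisive inequality is simply $\sum u_i\le\alpha\sqrt{\sum u_i^2}$ combined with $\sqrt{\sum u_i^2}\le\sqrt{\sum u_i}$, giving $\sqrt{\sum u_i}\le\alpha$; re-substituting, $\sqrt{\sum u_i^2}\le\sqrt{\sum u_i}\le\alpha$ and the inequality $\sum u_i\le\alpha\sqrt{\sum u_i^2}$ can be iterated to drive $\sum u_i\to0$. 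I would present this as a short self-contained lemma on nonnegative reals and then apply it, which keeps the proof clean and isolates the one genuinely delicate estimate.
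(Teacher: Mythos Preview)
Your overall strategy---pass (\ref{cond:WX_k}) to the limit, use stationarity of $X_\infty$ and Proposition~\ref{prop:stationary point} to rewrite both sides in terms of the singular values, and reduce to a scalar inequality in $u_i=1-\sigma_i^2\in[0,1]$---is correct and in fact cleaner than the paper's argument. But your final step has a genuine gap. After the substitution you correctly arrive at
\[
\sum_i u_i \;\le\; \alpha\sqrt{\textstyle\sum_i u_i^2},
\]
and then you bound $\sqrt{\sum u_i^2}\le\sqrt{\sum u_i}$ (using $u_i\le 1$) to get $\sum u_i\le\alpha^2$. At this point you propose to ``iterate'' to drive $\sum u_i\to 0$. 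This cannot work: the $u_i$ are fixed numbers, and the pair of constraints $s\le\alpha t$, $t^2\le s$ (with $s=\sum u_i$, $t=\sqrt{\sum u_i^2}$) is perfectly compatible with $s=\alpha^2$, $t=\alpha$ as far as those two inequalities go; repeating them only reproduces $s\le\alpha^2$.

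The fix is a single line you missed: for nonnegative reals one has $\sqrt{\sum u_i^2}\le\sum u_i$ (this is just $\|u\|_2\le\|u\|_1$, or $(\sum u_i)^2\ge\sum u_i^2$). Feeding this into your displayed inequality gives
\[
\sum_i u_i \;\le\; \alpha\sqrt{\textstyle\sum_i u_i^2}\;\le\;\alpha\sum_i u_i,
\]
which with $\alpha<1$ forces $\sum_i u_i=0$ immediately, hence $\sigma_i(X_\infty)=1$ for all $i$ and $X_\infty\in\mathbb Q\cap\mathbb P$ by Proposition~\ref{prop:global opt}. No iteration, no auxiliary lemma, no appeal to the bound $f(X_k)<\alpha^2/4$ beyond boundedness of $\{X_k\}$.

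For comparison, the paper's proof proceeds by contradiction but takes a longer route: it introduces an auxiliary parameter $\beta>0$, shows that $f(X_\infty)\le\frac{\alpha^2(2\beta+1)}{4(\beta+1)^2}$ and $\lambda\|(WX_\infty)_-\|_F^2\le\beta\|X_\infty X_\infty^T-I\|_F^2$ are simultaneously satisfiable for some $\beta$ (this step uses (\ref{cond:WX_k}) at the limit), rewrites both via Proposition~\ref{prop:stationary point}, and then shows the first inequality forces $\sigma_i^2(1-\sigma_i^2)>\beta(1-\sigma_i^2)^2$ termwise, contradicting the second after summation. Your $u_i$-substitution collapses all of this to the one-line norm comparison above, so once you patch the gap your argument is both correct and shorter. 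I would drop the exploratory paragraphs (the attempts with $s$, $e_\infty$, trace norm, etc.) and present only: limit of (\ref{cond:WX_k}), stationarity identities, the substitution $u_i=1-\sigma_i^2\ge0$, and the two-line finish $\sum u_i\le\alpha\sqrt{\sum u_i^2}\le\alpha\sum u_i$.
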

\begin{proof}
Let $X_{\infty}$ be any accumulative point of $\{X_k\}$ and $X_{k_j}\to X_{\infty}$ as in the proof of Theorem \ref{thm:f_min}. 
The condition $\varlimsup_k\|X_k\|_2\leq 1$ implies that $\sigma_i\leq \|X_{\infty}\|_2\leq 1$ for all singular values $\{\sigma_i\}$ of $X_{\infty}$. Below we show that all $\sigma_i=1$, {\it i.e.}, $f(X_{\infty})=0$.

Assume that there is at least an $\sigma_i<1$. We have that $0<f(X_{\infty})%=\frac{1}{4}\sum_{\sigma_i<1}(1-\sigma_i^4)
<\frac{\alpha^2}{4}$.  We will give a contradiction, based on the observation that there is a constant $\beta>0$ such that
\begin{align}\label{cond:beta}
    f(X_\infty)\leq\frac{\alpha^2(2\beta+1)}{4(\beta+1)^2},\quad 
    \lambda\|(WX_\infty)_-\|_F^2\leq \beta \|X_\infty X_\infty^T-I\|_F^2.
\end{align}

To show (\ref{cond:beta}), let $a = \|X_\infty X_\infty^T-I\|_F^2$ and $b = \lambda\|(WX_\infty)_-\|_F^2$. Both $a$ and $b$ are positive by Proposition \ref{prop:global opt} since $f(X_\infty)>0$. 
It is easy to verify that the first one in (\ref{cond:beta}) is equivalent to $\beta\leq\frac{c}{1-c}$, where $c = \sqrt{1-4 f(X_\infty)/\alpha^2}\in (0,1)$. The second one is equivalent to $\frac{b}{a}\leq\beta$. Thus, the existence of a positive $\beta$ is equivalent to $\frac{b}{a} \leq \frac{c}{1-c}$, or equivalently, $\frac{b}{a+b} \leq c$.
Recalling that the condition (\ref{cond:WX_k}) gives $a+b\leq \alpha\sqrt{a}$. Hence, 
\[
    1-c^2 = \frac{4f(X_\infty)}{\alpha^2}=\frac{a+2b}{\alpha^2} 
    \leq \frac{a(a+2b)}{(a+b)^2}
    = 1-\frac{b^2}{(a+b)^2}.
\]
That is, the inequality $\frac{b}{a+b} \leq c$ holds. Therefore, there is a positive $\beta$ satisfying (\ref{cond:beta}). 

By Proposition \ref{prop:stationary point} with $X = X_{\infty}$, the inequalities in (\ref{cond:beta}) become
\begin{align}\label{cond:beta_}
    \sum_{\sigma_i<1}(1-\sigma_i^4)\leq\frac{\alpha^2(2\beta+1)}{(\beta+1)^2},\quad
    \sum_{\sigma_i<1}\sigma_i^2(1-\sigma_i^2) \leq \beta \sum_{\sigma_i<1}(1-\sigma_i^2)^2.
\end{align}
However, the left inequality above implies that $1-\sigma_i^4<\frac{2\beta+1}{(\beta+1)^2}$ since $\alpha^2<1$, {\it i.e.}, $\sigma_i^2(\beta+1)>\beta$ for each $\sigma_i<1$. Thus, 
$\sigma_i^2>\beta(1-\sigma_i^2)$ and $\sigma_i^2(1-\sigma_i^2) > \beta (1-\sigma_i^2)^2$, which yields
a contradiction to the right inequality in (\ref{cond:beta_}), 
completing the proof.
$\hfill\square$
\end{proof}

We will show that the modified NCG given in the next section can yield a sequence $\{X_k\}$ that guarantees the descent of $\{f(X_k)\}$ and $\lim_k\nabla f(X_k)=0$. Because of the descent, it is easy to have $f(X_k)<1/4$. The inequality (\ref{cond:WX_k}) is always satisfied except a few of early $X_k$'s. Generally, $\|X_k\|_2>1$ for $k>1$, and $\|X_k\|_2\to \rho\geq 1$ in our experiments, whenever $\{X_k\}$ converges locally or globally. The difference is that $\rho>1$ for a local optimum and $\rho=1$ for the global optimum. The following theorem further characterizes an explicit and simple relation between the objective function $f$ and the norm of its derivative $\nabla f$ nearby a stationary point of $f$. This relation will be used in our algorithm to check whether an iterative sequence tends to the global optimum or not. 

\begin{theorem}\label{thm:fdf}
If $X$ is a stationary point of $f$, then for sufficiently small $\Delta = \tilde X-X$,
\begin{align}\label{f:df}
    f(\tilde X) = f(X)+\frac{1}{2}\langle \nabla f(\tilde X),\Delta \rangle 
    -\frac{1}{2}\langle \Delta\Delta^TX,\Delta \rangle-\frac{1}{4}\|\Delta\Delta^T\|_F^2.
\end{align}
Furthermore, if $X$ is globally optimal, then
$f(\tilde X)< \|\nabla f(\tilde X)\|_F\|\Delta\|_F$.
\end{theorem}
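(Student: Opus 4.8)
The plan is to perform an exact Taylor-type expansion of $f$ around the stationary point $X$, exploiting the fact (noted after Proposition \ref{prop:stationary point}) that $f$ is a polynomial near $X$ whenever $WX$ has no zero entries, and more generally that the quartic term $\frac14\|XX^T-I\|_F^2$ is a genuine polynomial everywhere while the quadratic term $\frac\lambda2\|(WX)_-\|_F^2$ behaves locally like a quadratic form with a fixed sign pattern. First I would write $\tilde X = X + \Delta$ and expand each of the two pieces of $f(\tilde X)$ separately. For the quartic piece, $\|(X+\Delta)(X+\Delta)^T - I\|_F^2$ expands exactly (no remainder) into terms of degree $0$ through $4$ in $\Delta$; I would group them so that the degree-$0$ term reproduces $\|XX^T-I\|_F^2$, the linear and part of the quadratic terms get absorbed into $\langle\nabla f(\tilde X),\Delta\rangle$ via $\nabla f(\tilde X) = (\tilde X\tilde X^T - I)\tilde X + \lambda W^T(W\tilde X)_-$ from (\ref{f:der}), and the leftover cubic/quartic pieces are precisely $-\tfrac12\langle\Delta\Delta^TX,\Delta\rangle - \tfrac14\|\Delta\Delta^T\|_F^2$ after using the stationarity identity (\ref{first order}), i.e. $(XX^T-I)X = -\lambda W^T(WX)_-$.

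The second, more delicate step concerns the term $\frac\lambda2\|(WX)_-\|_F^2$. For $\Delta$ sufficiently small, every entry of $W\tilde X$ that is strictly positive (resp. strictly negative) at $X$ stays strictly positive (resp. strictly negative), so on that part $\|(W\tilde X)_-\|_F^2$ agrees with the smooth quadratic $\|(W\tilde X)\odot t\|$-type expression with the same indicator as at $X$. The only subtlety is the entries of $WX$ that are exactly zero: there $(W\tilde X)_-^2 = \min\{(W\tilde X), 0\}^2$ is still a $C^1$ function whose value and first derivative at $X$ vanish, so it contributes only a second-order term that matches the Hessian-type expansion and, crucially, is consistent with writing the difference via $\langle\nabla f(\tilde X),\Delta\rangle$. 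I would verify that, combining both pieces, the identity
\[
    f(\tilde X) - f(X) = \tfrac12\langle\nabla f(\tilde X),\Delta\rangle - \tfrac12\langle\Delta\Delta^TX,\Delta\rangle - \tfrac14\|\Delta\Delta^T\|_F^2
\]
holds; the factor $\tfrac12$ in front of $\langle\nabla f(\tilde X),\Delta\rangle$ (rather than a more familiar coefficient) is exactly what one gets from the "midpoint"-style algebraic identity for quartics combined with the stationarity of $X$, and this is the computation I would carry out carefully.

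For the final inequality, suppose $X$ is globally optimal, so by Proposition \ref{prop:global opt} we have $XX^T = I$ and $(WX)_- = 0$. Then in (\ref{f:df}) the term $\langle\Delta\Delta^TX,\Delta\rangle = \langle\Delta\Delta^T, \Delta X^T\rangle$ together with $-\tfrac14\|\Delta\Delta^T\|_F^2$ needs to be shown nonnegative (so it can be dropped), after which $f(\tilde X) \le \tfrac12\langle\nabla f(\tilde X),\Delta\rangle \le \tfrac12\|\nabla f(\tilde X)\|_F\|\Delta\|_F < \|\nabla f(\tilde X)\|_F\|\Delta\|_F$; alternatively one shows directly that $f(\tilde X) = \tfrac14\|\Delta\Delta^T + \Delta X^T + X\Delta^T\|_F^2 + \tfrac\lambda2\|(W\Delta)_-\|_F^2$ when $XX^T=I$ and $(WX)_-=0$, whence $f(\tilde X)$ is $O(\|\Delta\|^2)$ while $\nabla f(\tilde X) = O(\|\Delta\|)$ but with a matching quadratic lower structure forcing the stated strict bound. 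I expect the main obstacle to be bookkeeping the cubic and quartic terms of the quartic expansion and confirming that the non-smooth term $\|(WX)_-\|_F^2$ contributes nothing beyond what $\langle\nabla f(\tilde X),\Delta\rangle$ already captures near the zero entries of $WX$; the sign argument for the last inequality should then be comparatively short, relying on $\langle\Delta\Delta^T,\Delta X^T + X\Delta^T\rangle$ and Cauchy--Schwarz.
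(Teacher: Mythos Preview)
Your plan for the identity (\ref{f:df}) is essentially the paper's: expand the quartic piece exactly, expand $\|(W\tilde X)_-\|_F^2$ using the fixed sign pattern on the nonzero entries of $WX$, and reassemble everything through $\langle\nabla f(\tilde X),\Delta\rangle$ using the stationarity condition. Your remark that the zero entries of $WX$ contribute a $C^1$ term whose value and derivative vanish at $X$ is exactly what makes (\ref{W tilde_X}) go through, and is in fact slightly more careful than the paper's phrasing.

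The gap is in the second claim. You propose to show that
\[
\tfrac12\langle\Delta\Delta^TX,\Delta\rangle+\tfrac14\|\Delta\Delta^T\|_F^2\ \ge\ 0
\]
and then drop it to obtain $f(\tilde X)\le\tfrac12\langle\nabla f(\tilde X),\Delta\rangle$. This inequality is \emph{false in general}: the cubic term $\langle\Delta\Delta^TX,\Delta\rangle$ is an odd form in $\Delta$ and can be made negative, and for small $\|\Delta\|_F$ it dominates the quartic $\|\Delta\Delta^T\|_F^2$. The paper therefore splits the neighborhood into $\mathcal N_+=\{g(\Delta)\ge0\}$ and $\mathcal N_-=\{g(\Delta)<0\}$ with $g(\Delta)=2\langle\Delta^TX,\Delta^T\Delta\rangle+\|\Delta\Delta^T\|_F^2$; your argument is exactly the $\mathcal N_+$ case. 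On $\mathcal N_-$ the paper instead linearizes the gradient as $\nabla f(\tilde X)=\ell(\Delta)+O(\|\Delta\|_F^2)$ with the piecewise-linear operator $\ell(\Delta)=X\Delta^TX+\Delta X^TX+\lambda W^T\big((W\Delta)\odot T_0\big)$ (recall $T_-=0$ at a global optimum), and proves by contradiction that $\ell(\Delta)=0$ is impossible for nonzero $\Delta$ in $\mathcal N_-$: assuming it, one forces $X^T\Delta+\Delta^TX=0$, which in turn forces $g(\Delta)=\|\Delta\Delta^T\|_F^2\ge0$, contradicting $\tilde X\in\mathcal N_-$. Piecewise linearity then yields $\|\ell(\Delta)\|_F\ge c\|\Delta\|_F$ on $\mathcal N_-$, hence $\|\nabla f(\tilde X)\|_F\ge\|\Delta\|_F^2$ for small $\Delta$, and the bound $f(\tilde X)\le\|\nabla f(\tilde X)\|_F\|\Delta\|_F$ follows.

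Your ``alternatively'' sketch (``$\nabla f(\tilde X)=O(\|\Delta\|)$ with a matching quadratic lower structure'') is pointing in this direction but does not identify the key fact: one needs an \emph{injectivity} statement for $\ell$ on the bad set $\mathcal N_-$, and that injectivity uses precisely the defining inequality $g(\Delta)<0$. Without that structural argument the lower bound $\|\nabla f(\tilde X)\|_F\gtrsim\|\Delta\|_F$ is not available (and is false on all of $\mathcal N(X)$, since $\ell$ has a nontrivial kernel coming from $\Delta$ with $X^T\Delta+\Delta^TX=0$ and $(W\Delta)\odot T_0=0$). A plain Cauchy--Schwarz on $\langle\Delta\Delta^T,\Delta X^T+X\Delta^T\rangle$ will not rescue the $\mathcal N_-$ case.
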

\begin{proof}
Consider an arbitrary $\tilde X$ in the open neighborhood of $X$, 
\[
    {\cal N}(X) = \big\{\tilde X:\ 
        (W\tilde X)_{ij}(WX)_{ij}>0\ \mbox{ if }\ (WX)_{ij}\neq 0\big\}.
\]
We will show that
\begin{align}
    \|\tilde X\tilde X^T\!\!-I\|_F^2 
    = &\ \big\|XX^T\!\!-I\big\|_F^2 
    +2\langle (\tilde X\tilde X^T\!\!-I)\tilde X, \Delta\rangle 
       -2\langle \lambda (WX)_-,W\Delta \rangle \nonumber\\
    &\ -2\langle X, \Delta\Delta^T\Delta\rangle
       -\|\Delta\Delta^T\|_F^2; \label{tilde_X}\\
    \|(W\tilde X)_-\|_F^2
    = &\ \|(WX)_-\|_F^2+ \langle  (W\tilde X)_-, W\Delta\rangle
    + \langle  (WX)_- , W\Delta\rangle. \label{W tilde_X}
\end{align}
Substituting these equalities into 
$f(\tilde X) = \frac{1}{4}\|\tilde X\tilde X^T-I\|_F^2
+\frac{\lambda}{2}\|(W\tilde X)_-\|_F^2$, and combining  
$\langle\nabla f(\tilde X),\Delta \rangle 
= \langle(\tilde X\tilde X^T-I)\tilde X, \Delta\rangle
+\langle\lambda (W\tilde X)_-,W\Delta \rangle$, we can get (\ref{f:df}) immediately. 

We prove (\ref{tilde_X}) below, based on $\tilde X\tilde X^T -I = XX^T-I +X\Delta^T+\Delta X^T+\Delta\Delta^T$. On one hand, from the equality, we have
\begin{align*}
	 \|\tilde X\tilde X^T-I\|_F^2 
	= &\ \big\|XX^T-I\big\|_F^2 
	    + \big\|X\Delta^T+\Delta X^T + \Delta\Delta^T\big\|_F^2\\
    &  + 2\big\langle XX^T-I,\ X\Delta^T+\Delta X^T+ \Delta\Delta^T\big\rangle.
\end{align*}
On the other hand, we rewrite 
$2\langle (\tilde X\tilde X^T-I)\tilde X, \Delta \rangle 
= \langle \tilde X\tilde X^T-I, \tilde X\Delta^T +\Delta\tilde X^T\rangle$ as
\begin{align*}
    2\big\langle (\tilde X\tilde X^T-I)\tilde X, \Delta \big\rangle
    =&\ \big\langle XX^T-I +X\Delta^T+\Delta X^T+\Delta\Delta^T,\ 
        X\Delta^T+\Delta X^T+2\Delta\Delta^T\rangle\\
    =&\ \big\langle XX^T-I,\ X\Delta^T+\Delta X^T+\Delta\Delta^T \rangle
         +\big\|X\Delta^T\!\!+\Delta X^T\!\!+\Delta\Delta^T\big\|_F^2\\
     &\quad +\langle XX^T-I +X\Delta^T+\Delta X^T+\Delta\Delta^T,\ 
        \Delta\Delta^T \rangle\\
    =&\ 2\big\langle XX^T-I,\ X\Delta^T+\Delta X^T+\Delta\Delta^T \rangle
         +\big\|X\Delta^T\!\!+\Delta X^T\!\!+\Delta\Delta^T\big\|_F^2\\
     &\quad -2\langle XX^T-I-\Delta\Delta^T,\ \Delta X^T\rangle
     + \|\Delta\Delta^T\|_F^2.
\end{align*}
Since $(XX^T-I)X=-\lambda W^T(WX)_-$ by $\nabla f(X) = 0$, 
we also have that 
\[
    \langle XX^T-I,\Delta X^T\rangle = -\langle \lambda (WX)_-,W\Delta \rangle.
\]
Hence, combining these equalities, we get (\ref{tilde_X}). 

To show (\ref{W tilde_X}), let $T_-$ and $T_0$ be the indicator matrices of the negative or zero entries of $WX$, respectively. Since $\tilde X\in{\cal N}(X)$, 
\begin{align*}%\label{WX}
    (W\tilde X)_- = (W\tilde X)\odot (T_-+T_0)
    = (WX+W\Delta)\odot (T_-+T_0)
    = (WX)_- + H_\Delta
\end{align*}
where $H_\Delta = W\Delta\odot( T_- + T_0)$. Combining it with $\langle (WX)_-,H_\Delta\rangle=\langle (WX)_-,W\Delta\rangle$, we get
\begin{align*}
    &\|(W\tilde X)_-\|_F^2 
    =  \|(WX)_-\|_F^2 + 2\langle (WX)_-,W\Delta\rangle + \|H_\Delta\|_F^2,\\
    &\langle (W\tilde X)_-, W\Delta\rangle
    =  \langle  (WX)_- , W\Delta\rangle 
        +\langle  H_\Delta, W\Delta\rangle
    = \langle  (WX)_- , W\Delta\rangle +\|H_\Delta\|_F^2.
\end{align*}
Hence (\ref{W tilde_X}) is also true, completing the proof of (\ref{f:df}). 

For a globally optimal $X$, $f(X) = 0$ and $T_- =0$. We estimate $f(\tilde X)$ in the two subsets according to the sign of function $g(\Delta) = 2\langle\Delta^TX,\Delta^T\Delta \rangle + \|\Delta\Delta^T\|_F^2$,
\[
    {\cal N}_+ = \{\tilde X\in {\cal N}(X): g(\tilde X-X)\geq 0\}; \quad
    {\cal N}_- = \{\tilde X\in {\cal N}(X): g(\tilde X-X)<0\}.
\]
For $\tilde X \in{\cal N}_+$, we have 
$
    f(\tilde X)\leq\frac{1}{2}\langle\nabla f(\tilde X),\Delta\rangle\leq \frac{1}{2}\|\nabla f(\tilde X)\|_F\|\Delta\|_F
$
directly. 

Consider $\ell(\Delta) = X\Delta^TX+\Delta X^TX +\lambda W^TH_\Delta$ for $\tilde X = X+\Delta\in{\cal N}_-$, and rewrite 
\[
    \nabla f(\tilde X)=\ell(\Delta)+(X\Delta^T+\Delta X^T)\Delta +\Delta\Delta^T(X+\Delta)
\]
We should have $\ell(\Delta)\neq 0$ for any $\tilde X = X+\Delta\in{\cal N}_-$ different from $X$. Otherwise, $\ell(\Delta) = 0$ for a nonzero $\Delta$ such that $\tilde X = X+\Delta\in{\cal N}_-$. It follows that
\begin{align*}
    0 = \langle \ell(\Delta),\Delta\rangle 
    &\ =\langle X\Delta^TX 
        + \Delta X^TX +\lambda W^TH_\Delta, \ \Delta\rangle \\
    &\ = \langle \Delta^TX,X^T\Delta \rangle +
        \|\Delta X^T\|_F^2+\lambda \|H_\Delta\|_F^2.
\end{align*}
Since $\lambda W^TH_\Delta = -(X\Delta^T+\Delta X^T)X$ by $\ell(\Delta) = 0$, 
\[
    \|\lambda W^TH_\Delta\|_F^2 = \|X\Delta^T+\Delta X^T\|_F^2
    = 2\|\Delta X^T\|_F^2+2\langle \Delta^TX,X^T\Delta \rangle 
    = -2\lambda \|H_\Delta\|_F^2,
\]
which implies that $\Delta^TX+X^T\Delta = 0$. Thus, by $g(\Delta)<0$, we get a contradiction
\[
    0\leq \|\Delta\Delta^T\|_F^2 <
    -2\langle\Delta^TX,\Delta^T\Delta \rangle 
    = -\langle \Delta^TX+X^T\Delta,\Delta^T\Delta \rangle = 0.
\]
Therefore, $\ell(\Delta)$ is non-singular. Since it is piece-wisely linear, there is a positive constant $c$ such that $\|\ell(\Delta)\|\geq c\|\Delta\|_F$ for $\tilde X\in{\cal N}_-$. Thus, for $\|\Delta\|_F\leq\min\{1,c/5\}$, $\|\ell(\Delta)\|\geq5\|\Delta\|_F^2$, and 
by definition, $\|\nabla f(\tilde X)\|_F\geq c\|\Delta\|_F- 4\|\Delta\|_F^2\geq \|\Delta\|_F^2$. Hence,
\[
    f(\tilde X)\leq \frac{1}{2}(\|\nabla f(\tilde X)\|_F +\|\Delta\|_F^2)\|\Delta\|_F\leq \|\nabla f(\tilde X)\|_F\|\Delta\|_F.
\]
The theorem is then proven. $\hfill\square$
\end{proof}

Concluding from Theorem \ref{thm:fdf}, we get that
\begin{align}\label{dff}
    \lim_{\tilde X\to X}\frac{\|\nabla f(\tilde X)\|_F}{f(\tilde X)}
    = \left\{\begin{array}{cl}
         0,       & \ \mbox{\rm if $X$ is not globally optimal}; \\
         +\infty, & \ \mbox{\rm if $X$ is globally optimal}. 
      \end{array}\right.
\end{align}
In the next section, we will consider a nonlinear conjugated gradient method for solving the minimization problem (\ref{prob:epm}). The sufficient conditions mentioned above will be taken into account as much as possible.

\section{Modified constricted conjugate gradient algorithm}\label{sect:mncg}

The NCG method \cite{F1964} is commonly used for solving nonlinear smooth optimization problems. Generally, its convergence requires the objective function to be twice continuously differentiable \cite{A1985,SY2006}, or be differentiable and its gradient is Lipschitz continuous \cite{SY2006} if the conjugated gradient direction is suitably updated. Unfortunately, in our case, the function $f$ is continuously differentiable only.

In this section we briefly describe the NCG and conditions of its convergence at first. To guarantee the convergence when the NCG is applied to the exterior point model (\ref{prob:epm}), we propose two modifications for the NCG. One is a new approach for updating the conjugate gradient in the NCG, and the other one is a simpler rule for setting an inexact line search for updating the iteration point. These modifications can not only guarantee the convergence for continuously differentiable functions without other conditions, but also improve the efficiency of NCG. Hence, the modified NCG works on our exterior point problem (\ref{prob:epm}).

\subsection{The NCG}

The NCG provides an iterative scheme for minimizing a nonlinear smooth function $\phi(x)$ via the two classical steps, starting at $d_0  = -g_0$, with $g_0 = \nabla \phi(x_0)$ on an initial point $x_0$: 
Modify the current point $x_k$ to 
\begin{align}\label{def:x}
    x_{k+1} = x_k + \alpha_k d_k
\end{align}
along the direction $d_k$ with a suitable step length $\alpha_k$. Then, update the conjugate direction $d_k$ to 
\begin{align}\label{def:d}
    d_{k+1} = -g_{k+1} +\beta_k d_k
\end{align}
with the gradient $g_{k+1} = \nabla \phi(x_{k+1})$ at the updated point $x_{k+1}$ and a suitable value $\beta_k$.

The weak convergence $\varliminf g_{k} =  0$ or the strong convergence of $x_k$ to the minimizer of $\phi$ depends on the smoothness of $\phi$ and the choices of $\{\alpha_k\}$ and $\{\beta_k\}$. The ideal $\alpha_k$ is the minimizer of $\phi(x_k + \alpha d_k)$ with respect to $\alpha$, which is called as exact line search. Inexact search is commonly suggested but $\alpha_k$ should satisfy the weak Wolfe conditions \cite{NJWS1999}
\begin{align}
    &\phi(x_k+\alpha d_k) 
    \leq \phi(x_k)+\rho\alpha \langle g_k,d_k\rangle \label{wolfe1}\\ 
    &\langle\nabla \phi(x_k+\alpha d_k),d_k\rangle 
    \geq \sigma \langle g_k,d_k \rangle \label{wolfe2}
\end{align}
with two positive parameters $\rho<\sigma<1$, or the strong Wolfe conditions (\ref{wolfe1}) and 
\begin{align}\label{wolfe3}
    |\langle\nabla \phi(x_k+\alpha d_k),d_k\rangle| 
    \leq -\sigma \langle g_k,d_k \rangle.
\end{align}
Generally, the inexact line search $\alpha_k$ can be obtained via bisection \cite{MS1982} or interpolation \cite{SY2006}, or combination of the two approaches \cite{F2013}.
About the step choice for $\beta_k$, there are three commonly used approaches in the literature \cite{F1964,P1969,Y2009}:
\begin{align}
    \beta_k^{\rm FR} &= \frac{\|g_{k+1}\|_2^2}{\|g_k\|_2^2},\label{FR}\\ 
    \beta_k^{\rm PRP} &= \frac{\langle g_{k+1},g_{k+1}-g_k\rangle}{\|g_k\|_2^2},\label{PRP}\\
    \beta_k^{\rm MPRP} &= \beta_k^{\rm PRP} - \min\big\{\beta_k^{\rm PRP},\ \frac{\nu\|g_{k+1}-g_k\|^2}{\|g_k\|^4}
    \langle g_{k+1},d_k\rangle\big\},\quad \nu>\frac{1}{4}. \label{MPRP}
\end{align}

Sun and Yuan have shown in \cite{SY2006} that the strong convergence of the NCG with the exact line search and $\{\beta_k^{\rm PRP}\}$ is true if $\phi$ is twice continuously differentiable and uniformly convex, and the level set $\{x: \phi(x)\leq \phi(x_0)\}$ is bounded. However, the strong convergence is not guaranteed if the exact search is relaxed to the inexact one, even if it satisfies the strong Wolfe conditions. The weak convergence is guaranteed for the NCG with $\{\beta_k^{\rm FR}\}$ and inexact $\{\alpha_k\}$ satisfying the strong Wolfe conditions with $0<\rho<\sigma<\frac{1}{2}$, under the same conditions on $\phi$ without uniformly convexity, or the gradient of $\phi$ is Lipschitz continuous, a stricter condition than the continuously differentiable $\phi$. For the modified PRP, MPRP, if the weak Wolfe conditions are satisfied by $\{\alpha_k\}$ and $ \{\alpha_k\}$ has a positive lower bounded, the weak convergence can be slightly improved to $g_k\to0$ under the same assumptions on $\phi$ as for FR \cite{Y2009}. 
It was reported in the literature that the PRP is more efficient than FR in applications although its stronger conditions may not be satisfied. MPRP performs better than PRP generally if the parameter $\nu$ is suitably set. 

\subsection{Modifications for the NCG}\label{subsect:mncg}

{\bf Modified step $\beta_k$.} 
Practically, the MPRP adopts a restart strategy: reset $\beta_k=0$, {\it i.e.}, $d_{k+1}=-g_{k+1}$, when $\langle g_{k+1},g_{k+1}-g_k -\frac{\nu\|g_{k+1}-g_k\|^2}{\|g_k\|^2}d_k\rangle\leq 0$. It guarantees 
\[
    \langle d_{k+1},g_{k+1} \rangle \leq -(1- \frac{1}{4\nu}) \|g_{k+1}\|^2_2,
\]
a sufficient condition for the descent of $\phi(x_{k+1})$ by (\ref{wolfe1}). However, this descent condition is not sufficient for the convergence of $\{g_k\}$. An additional condition about the positive lower-bound of $\{\alpha_k\}$ is required \cite{Y2009}. 

To guarantee the same convergence as MPRP for continuously differentiable $\phi(x)$ without the lower-bound condition on $\{\alpha_k\}$, we further modify MPRP as that\footnote{We always assume that both $g_k$ and $d_k$ are always nonzero. NCG terminates if $g_k=0$ or restarts if $d_k=0$.}
\begin{align}\label{MMPRP}
    \beta_k = 
    \min\Big\{\frac{\langle g_{k+1},g_{k+1}-g_k
        -\frac{\nu\|g_{k+1}-g_k\|^2}{\|g_k\|^2}d_k\rangle_{+}}{\|g_k\|_2^2}, \frac{\kappa\|g_{k+1}\|_2}{\|d_k\|_2}\Big\}.
\end{align}
where $\nu>\frac{1}{4}$ as in (\ref{MPRP}) and $\kappa>0$.
The modification can guarantee a stronger sufficient descent condition
$\langle d_{k+1},g_{k+1} \rangle \leq -\mu \|d_{k+1}\|_2\|g_{k+1}\|_2$.
We will show it in the next subsection.
The stronger sufficient descent condition with $\mu\in(0,1)$ was given in \cite{SY2006} for the convergence $g_k\to 0$ if the weak Wolfe conditions are safisfied and the gradient of $\phi$ is uniformly continuous in the level set $\{x:\phi(x)\leq \phi(x_0)\}$. We will further show that the uniformly continuous condition is also not necessary for the convergence.

{\bf Simple approach for inexact line search.} 
For continuously differentiable $\phi(x)$, the interpolation method does not guarantee the capture of required weak line search $\alpha_k$ since it asks for a thrice times continuously differentiable and unimodal $\phi(x)$ \cite{SY2006}. One can get $\alpha_k$ by the combination method \cite{F2013} that is more efficient than the bisection approach \cite{MS1982}. In \cite{F2013}, the bisection is combined with the interpolation in a bit complicated way for interval shrinking. Here we give a simpler and more efficient approach for determining $\alpha_k$ satisfying the weak Wolfe conditions.

Theoretically, at a current point $x=x_k$ with the conjugate direction $d=d_k$, the required inexact line search $\alpha=\alpha_k$ satisfying the weak Wolfe conditions (\ref{wolfe1}-\ref{wolfe2}) can be chosen as
\begin{align}\label{alpha}
    \alpha^* = \sup\big\{\hat\alpha: \mbox{the Wolfe condition (\ref{wolfe1}) holds over $(0,\hat\alpha)$ }\big\}.
\end{align}
It exists, is positive, and satisfies (\ref{wolfe1}-\ref{wolfe2}). To verify this claim, let's consider the function
\[
    g(\alpha) = \phi(x)+\rho\alpha\langle\nabla \phi(x),d\rangle-\phi(x+\alpha d).
\]
Clearly, (\ref{wolfe1}) is equivalent to $g(\alpha)\geq 0$, and meanwhile, (\ref{wolfe2}) holds if $g'(\alpha)\leq0$. By the definition and the continuousness of $\phi$, (\ref{wolfe1}) is true for $0<\alpha\leq\alpha^*$. The supremum in (\ref{alpha}) implies that $g(\alpha^*)=0$ and $g'(\alpha^*)\leq 0$. Hence, (\ref{wolfe2}) is also satisfied for $\alpha=\alpha^*$. Practically, there is a relative large sub-interval of $(0,\alpha^*]$ in which both (\ref{wolfe1}) and (\ref{wolfe2}) are true. For instance, if $\hat\alpha\in(0,\alpha^*]$ is the largest point such that $g(\alpha)$ is a local maximum, then $g'(\alpha)\leq 0$ in $[\hat\alpha,\alpha^*]$. Therefore, (\ref{wolfe1}-\ref{wolfe2}) hold for $\alpha\in[\hat\alpha,\alpha^*]$. 

An ideal choice of $\alpha$ is the minimizer $\alpha_{\min}$ of $\phi(x+\alpha d)$ over $(0, \alpha^*]$ since it decreases $\phi$ as small as possible, while both (\ref{wolfe1}) and (\ref{wolfe2}) are still satisfied.
In this subsection, we give a simple rule for pursuing $\alpha_{\min}$ via a quadratic interpolation to $\phi(x+\alpha d)$, assuming $\phi$ is continuously differentiable. It generates a nested and shrunk interval sequence containing the required $\alpha$. The pursuing terminates as soon as a point satisfying (\ref{wolfe1}-\ref{wolfe2}) is found.

Initially, we set $\alpha_0'=0$ that satisfies (\ref{wolfe1}) but (\ref{wolfe2}), and choose a relatively large $\alpha_0''>0$ that does not satisfy (\ref{wolfe1}). A simple choice of $\alpha_0''$ will be given later. Starting with $[\alpha_0', \alpha_0'']$, we generate a sequence of intervals $[\alpha_0', \alpha_0'']$ iteratively such that each $\alpha_\ell'$ satisfies (\ref{wolfe1}) but  $\alpha_\ell''$ does not, and meanwhile, $\alpha_\ell'$ doesn't satisfy (\ref{wolfe2}). That is, for $x_{\ell}' = x+\alpha_\ell'd$ and $x_{\ell}'' = x+\alpha_\ell''d$
\begin{align}\label{cond_end}
    \phi(x_{\ell}')\leq \phi(x)+\rho\alpha_\ell'\langle g,d\rangle,\
    \phi(x_{\ell}'')> \phi(x)+\rho\alpha_\ell''\langle g,d\rangle,\
    \langle\nabla \phi(x_{\ell}'),d\rangle<\sigma\langle g,d\rangle,
\end{align}
where $g = \nabla\phi(x)$. The third inequality above implies that $\langle\nabla \phi(x_{\ell}'),d\rangle<0$.  
Furthermore, by the first two inequalities in (\ref{cond_end}), we have that
\begin{align}\label{cond_alpha''}
    \phi(x_{\ell}'')> \phi(x_{\ell}')+\rho(\alpha_\ell''-\alpha_\ell')\langle g,d\rangle
    >\phi(x_{\ell}')+\frac{\rho}{\sigma}(\alpha_\ell''-\alpha_\ell')\langle\nabla \phi(x_{\ell}'),d\rangle.
\end{align}
In the current interval, we consider a quadratic function $q(\alpha)$ with interpolation conditions 
\[
    q(\alpha_\ell') = \phi(x_{\ell}'), \quad 
    q'(\alpha_\ell') = \langle\nabla \phi(x_{\ell}'),d\rangle, \quad 
    q(\alpha_\ell'') = \phi(x_{\ell}''),
\]
It can be represented as 
\begin{align*}
    q(\alpha) 
    = &\ \phi(x_{\ell}')+(\alpha-\alpha_\ell') \langle\nabla\phi(x_{\ell}'),d\rangle \\
    & +\big(\phi(x_{\ell}'')-\phi(x_{\ell}')
        -(\alpha_\ell''-\alpha_\ell')\langle\nabla \phi(x_{\ell}'),d\rangle\big)
    \frac{(\alpha-\alpha_\ell')^2}{(\alpha_\ell''-\alpha_\ell')^2}
\end{align*}
with the minimizer $c_\ell = \arg\min_{\alpha}q(\alpha)$ given by
\begin{align}\label{alpha_k}
    c_\ell = \alpha_\ell'
        +\frac{\alpha_\ell''-\alpha_\ell'}{2}
         \frac{-(\alpha_\ell''-\alpha_\ell')
                \langle\nabla \phi(x_{\ell}'),d\rangle}
            {\phi(x_{\ell}'')-\phi(x_{\ell}') -(\alpha_\ell''-\alpha_\ell')
             \langle\nabla \phi(x_{\ell}'),d\rangle}
    >\alpha_\ell'.
\end{align}
By the Mean-Value Theorem for derivatives and the second inequality in (\ref{cond_alpha''}), %we have
\begin{align}\label{sigma}
    0<(1-M_{\ell})^{-1}
    \leq&\ \Big(1-\frac{\langle\nabla \phi(\bar x_{\ell}),d\rangle}
                    {\langle\nabla \phi(x_{\ell}'),D_k\rangle}\Big)^{-1} \nonumber\\
    =&\ \frac{-(\alpha_\ell''-\alpha_\ell')
                \langle\nabla \phi(x_{\ell}'),D_k\rangle}
            {\phi(x_{\ell}'')-\phi(x_{\ell}') -(\alpha_\ell''-\alpha_\ell')
             \langle\nabla \phi(x_{\ell}'),D_k\rangle}
    <\frac{\sigma}{\sigma-\rho},
\end{align}
where $\bar x_{\ell} = x+\bar\alpha_\ell d$ with 
$\bar\alpha_\ell\in [\alpha_\ell', \alpha_\ell'']$ and 
\[
    M_{\ell} 
    = \min_{\alpha\in [\alpha_\ell', \alpha_\ell'']}
    \frac{\langle\nabla \phi(x+\alpha d),d\rangle}
         {\langle\nabla \phi(x_{\ell}'),d\rangle}
    \leq \frac{\langle\nabla \phi(\bar x_{\ell}),d\rangle}
              {\langle\nabla \phi(x_{\ell}'),d\rangle}
    <\frac{\rho}{\sigma}. % < 1/2.
\]
Hence,  if $0<2\rho<\sigma$, we have that
\begin{align}\label{c_ell bound}
    \alpha_\ell'<\alpha_\ell'+ \frac{1}{2(1-M_{\ell})}(\alpha_\ell''-\alpha_\ell')
    < c_\ell <\alpha_\ell'+\frac{\sigma}{2(\sigma-\rho)}(\alpha_\ell''-\alpha_\ell')
    <\alpha_\ell''.
\end{align}

We may shrink $[\alpha_\ell',\alpha_\ell'']$ to $[c_{\ell},\alpha_\ell'']$ or $[\alpha_\ell',c_{\ell}]$, if $\alpha=c_{\ell}$ satisfies (\ref{wolfe1}) or does not. However, if (\ref{wolfe1}) is satisfied, the interval length is 
$
    \alpha_\ell''-c_{\ell}
    \leq \frac{1-2M_{\ell}}{2-2M_{\ell}}(\alpha_\ell''-\alpha_\ell').
$
When $M_{\ell}<0$ and $|M_{\ell}|$ is large, $\frac{1-2M_{\ell}}{2-2M_{\ell}}\approx 1$. The interval shrinking is inefficient in this case. 
To avoid this phenomenon, we slightly modify $c_\ell$ as that with $\eta = \frac{\sigma}{2(\sigma - \rho)}$
\begin{align}\label{tilde_c}
    \tilde c_{\ell} 
    = \max\big\{ c_{\ell},\ \eta \alpha_{\ell}' +(1-\eta)\alpha_{\ell}'' \big\}
    \in(\alpha_\ell',\alpha_\ell'').
\end{align}
Since $\tilde c_{\ell} \geq\eta \alpha_{\ell}' +(1-\eta)\alpha_{\ell}''$ and $c_\ell<\alpha_\ell'+\eta(\alpha_\ell''-\alpha_\ell')$ by (\ref{c_ell bound}), we get
\[
    \alpha_{\ell}''- \tilde c_{\ell} \leq \eta(\alpha_\ell''-\alpha_\ell'),\quad
    \tilde c_{\ell}- \alpha_{\ell}'
      \leq \max\big\{\eta,1-\eta\big\}(\alpha_\ell''-\alpha_\ell')
      = \eta(\alpha_\ell''-\alpha_\ell').
\]
The last equality holds since $\eta>1/2$. Hence, if the Wolfe conditions (\ref{wolfe1}-\ref{wolfe2}) are satisfied for $\alpha = \tilde c_\ell$, we get the required $\alpha_k=\tilde c_\ell$. Otherwise, shrink $[\alpha_\ell',\alpha_\ell'']$ as 
\begin{align}\label{interval_alpha}
    [\alpha_{\ell+1}',\alpha_{\ell+1}'']
    =\left\{\begin{array}{ll}
         [\alpha_\ell', \tilde c_\ell], & \ \mbox{if (\ref{wolfe1}) does not hold for $\alpha = \tilde c_{\ell}$}; \\
         \mbox{$[\tilde c_\ell,\alpha_\ell'']$}, & \ \mbox{otherwise}.
    \end{array}
    \right.
\end{align}
The interval length is significantly decreased as $0<\alpha_{\ell+1}''-\alpha_{\ell+1}'\leq \eta(\alpha_{\ell}''-\alpha_{\ell}')$,
where $\eta<1$ since $2\rho <\sigma$. Hence, $\alpha_{\ell}''-\alpha_{\ell}'\to 0$ as $\ell\to \infty$. 

\begin{lemma}
If $\phi$ is lower bounded and continuously differentiable, an $\alpha = \tilde c_{\ell^*}$ satisfying (\ref{wolfe1}-\ref{wolfe2}) can be obtained within a finite iterations of (\ref{interval_alpha}) if $0< 2\rho <\sigma<1$. 
\end{lemma}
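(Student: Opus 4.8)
The plan is to prove termination by contradiction: suppose the interval‑shrinking loop (\ref{interval_alpha}) never exits, extract a common limit $\alpha^{*}$ of the nested intervals, and push the invariants (\ref{cond_end}) to the limit to conclude that $\alpha^{*}$ is a point at which a certain scalar ``gap function'' vanishes while having strictly positive derivative; that last property is incompatible with the right endpoints $\alpha_{\ell}''$, which violate (\ref{wolfe1}) yet converge to $\alpha^{*}$. To set this up I would reduce everything to one variable: write $h(\alpha)=\phi(x+\alpha d)$ and $\psi(\alpha)=\phi(x)+\rho\alpha\langle g,d\rangle-h(\alpha)$ with $g=\nabla\phi(x)$. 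Since $d=d_{k}$ is the (sufficient‑)descent direction of the NCG and the algorithm has not stopped (so $g\neq 0$), we have $\langle g,d\rangle<0$; with $\phi\in C^{1}$ the function $\psi$ is $C^{1}$, $\psi(0)=0$, and (\ref{wolfe1}) is equivalent to $\psi(\alpha)\ge 0$, while (\ref{wolfe2}) is equivalent to $\psi'(\alpha)\le(\rho-\sigma)\langle g,d\rangle$.

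Next I would dispatch the initialization and the invariants. Lower boundedness of $\phi$ enters exactly here: $\psi(\alpha)\le\phi(x)+\rho\alpha\langle g,d\rangle-\inf\phi\to-\infty$ as $\alpha\to\infty$ (because $\langle g,d\rangle<0$), so a finite $\alpha_{0}''>0$ violating (\ref{wolfe1}) exists and the starting interval is legitimate. A short check then shows that each step of (\ref{interval_alpha}) preserves all three relations in (\ref{cond_end}): $\psi(\alpha_{\ell}')\ge 0$, $\psi(\alpha_{\ell}'')<0$, and failure of (\ref{wolfe2}) at $\alpha_{\ell}'$, i.e. $\psi'(\alpha_{\ell}')>(\rho-\sigma)\langle g,d\rangle$. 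Both branches are immediate, the only subtle point being that in the ``otherwise'' branch $\tilde c_{\ell}$ satisfies (\ref{wolfe1}) but must fail (\ref{wolfe2}) because the loop did not exit there. Since $\tilde c_{\ell}\in(\alpha_{\ell}',\alpha_{\ell}'')$ the intervals are nested, and the bound $\alpha_{\ell+1}''-\alpha_{\ell+1}'\le\eta(\alpha_{\ell}''-\alpha_{\ell}')$ with $\eta<1$ — which is where the hypothesis $2\rho<\sigma$ is used — already established in the text forces $\alpha_{\ell}'\uparrow\alpha^{*}$ and $\alpha_{\ell}''\downarrow\alpha^{*}$ for a common limit $\alpha^{*}\in[0,\alpha_{0}'']$, with $\alpha_{\ell}'\le\alpha^{*}\le\alpha_{\ell}''$ for every $\ell$.

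The main step is the limiting argument. If the loop runs forever the invariants hold for every $\ell$; letting $\ell\to\infty$ and using continuity of $\phi$ and of $\nabla\phi$, the sandwich $\psi(\alpha_{\ell}'')<0\le\psi(\alpha_{\ell}')$ gives $\psi(\alpha^{*})=0$, and $\psi'(\alpha_{\ell}')>(\rho-\sigma)\langle g,d\rangle$ gives $\psi'(\alpha^{*})\ge(\rho-\sigma)\langle g,d\rangle>0$ (here $\rho<\sigma$ and $\langle g,d\rangle<0$). Because $\psi'$ is continuous and strictly positive at $\alpha^{*}$, $\psi$ is strictly increasing on some $(\alpha^{*}-\delta,\alpha^{*}+\delta)$, so from $\psi(\alpha^{*})=0$ we get $\psi>0$ on $(\alpha^{*},\alpha^{*}+\delta)$ and $\psi(\alpha^{*})=0$. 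But $\psi(\alpha_{\ell}'')<0$ for all $\ell$ then forces $\alpha_{\ell}''\ge\alpha^{*}+\delta$, contradicting $\alpha_{\ell}''\downarrow\alpha^{*}$. Hence the loop must exit at some finite $\ell^{*}$, and by the exit rule $\alpha=\tilde c_{\ell^{*}}$ satisfies (\ref{wolfe1})--(\ref{wolfe2}); it is positive since $\tilde c_{\ell^{*}}\in(\alpha_{\ell^{*}}',\alpha_{\ell^{*}}'')\subseteq(0,\alpha_{0}'']$.

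I expect the difficulty to be bookkeeping rather than conceptual: the care lies in verifying that (\ref{interval_alpha}) genuinely propagates every part of (\ref{cond_end}) in both branches (especially the ``$\tilde c_{\ell}$ fails (\ref{wolfe2})'' clause when we shrink to $[\tilde c_{\ell},\alpha_{\ell}'']$), and in keeping the one‑sided information $\alpha_{\ell}'\le\alpha^{*}\le\alpha_{\ell}''$ straight so that the contradiction is forced through the right endpoints. A small degenerate case worth noting explicitly is $\alpha^{*}=0$ (and, relatedly, the exclusion of $\langle g,d\rangle=0$), which the same computation handles since $\psi'(0)=(\rho-1)\langle g,d\rangle>(\rho-\sigma)\langle g,d\rangle>0$.
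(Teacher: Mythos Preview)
Your proposal is correct and follows essentially the same route as the paper's proof: assume non-termination, use the contraction $\alpha_{\ell+1}''-\alpha_{\ell+1}'\le\eta(\alpha_{\ell}''-\alpha_{\ell}')$ (which is where $2\rho<\sigma$ enters) to collapse the nested intervals to a single point $\alpha^{*}$, and then extract a contradiction from the invariants (\ref{cond_end}) and the continuity of $\nabla\phi$. The only cosmetic difference is in how the contradiction is phrased: the paper computes $\langle\nabla\phi(x_{*}),d\rangle$ as the limit of the difference quotient $\big(\phi(x_{\ell}'')-\phi(x_{\ell}')\big)/(\alpha_{\ell}''-\alpha_{\ell}')$ via (\ref{cond_alpha''}) and contrasts it directly with the limit of the third inequality in (\ref{cond_end}), whereas you pass through the scalar function $\psi$ and use local monotonicity to clash with the sign of $\psi(\alpha_{\ell}'')$; both arguments encode the same information and neither is materially simpler than the other.
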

\begin{proof}
If (\ref{wolfe1}-\ref{wolfe2}) do not hold for all $\tilde c_\ell$, the updating rule (\ref{interval_alpha}) yields a sequence of nested intervals $\{[\alpha_\ell',\ \alpha_\ell'']\}$. Since $0< 2\rho <\sigma<1$, the intervals tend to a single point $\alpha_*$ and both $\{x_\ell'\}$ and $\{x_\ell''\}$ tend to $x_* = x+\alpha_* d$. Hence, by (\ref{cond_alpha''}) and the Taylor extension of $\phi(x+\alpha d)$ at $\alpha=\alpha_*$, we get
\begin{align}\label{sigma_d}
    \langle\nabla \phi(x_*),d\rangle = 
    \lim_{\ell\to \infty} \frac{\phi(x_{\ell}'')-\phi(x_{\ell}')}{\alpha_\ell'' - \alpha_\ell'}
    \geq \rho \langle\nabla \phi(x),d\rangle
    > \sigma \langle\nabla \phi(x),d\rangle
\end{align}
since $\langle\nabla \phi(x),d\rangle<0$ and $\rho<\sigma$. However, by (\ref{cond_end}),
$\langle\nabla \phi(x_*),d\rangle\leq\sigma\langle\nabla \phi(x),d\rangle$, a contradiction with (\ref{sigma_d}).
$\hfill\square$
\end{proof}

A good choice of $\alpha_0''$ helps to pursue the minimizer $\alpha_{\min}$. 
Motivated by the above analysis on the estimation of the shrinking rate $\eta_{\ell}$, we suggest the experiential setting 
\begin{align}\label{alpha_0''}
    \alpha_0'' = \min\big\{\alpha = 2^p\eta: \ 
    \mbox{(\ref{wolfe1}) is not satisfied for $\alpha=2^p\eta$ with integer $p\geq 0$} \big\}.
\end{align}
Starting with the initial setting, the interval updating (\ref{interval_alpha}) converges quickly. For instance, we set $\rho = 0.1$ and $\tau = 0.4$, the interval iteration terminates within one or two iterations generally in our experiments. Algorithm 1 gives the details of the procedure for determining an inexact line search $\alpha_k$, given $x_k$, $\phi_k$, $g_k$, the conjugate direction $d_k$.

\begin{algorithm}[t]
\setstretch{1.35}
\caption{An inexact line search satisfying the weak Wolfe conditions} \label{alg:stepsize}
\begin{algorithmic}[1]
    \REQUIRE point $x$, $\phi = \phi(x)$, $g=\nabla\phi(x)$, direction $d$, and parameters $\sigma$, $\rho$, $\varepsilon$
    \ENSURE $\alpha$ satisfying (\ref{wolfe1}-\ref{wolfe2}) within accuracy $\varepsilon$, $x := x+\alpha d$, $\phi(x)$, and $g=\nabla\phi(x)$.
    \STATE Set $\alpha' = 0$, $x' = x$, $\phi' = \phi$, $g' = g$, 
        $\nu = \rho\langle g,d\rangle$. Find the smallest integer $p\geq 1$ such that (\ref{wolfe1}) does not hold for $\alpha =\eta 2^p$, and set $\alpha''=\eta 2^p$.
    \STATE Repeat the following iteration until $\alpha''-\alpha'<\varepsilon$. 
    \STATE \hspace{10pt} 
        Compute $c$ as (\ref{alpha_k}), $\tilde c$ as (\ref{tilde_c}), and 
        $\tilde\phi = \phi(\tilde x)$ at $\tilde x=x+\tilde cd$. 
    \STATE \hspace{10pt} 
        If $\tilde\phi > \phi+\tilde c\nu$, update $(\alpha'',\phi(x''))$ by $(\tilde c,\phi(\tilde x))$ and go to Step 3.
    \STATE \hspace{10pt} 
        Compute $\tilde g = \nabla\phi(\tilde x)$. 
        If $\langle\tilde g, d\rangle \geq \sigma\langle g,d \rangle$, set 
        $x =\tilde x$, $\phi = \tilde\phi$, $g = \tilde g$, and terminate.
    \STATE \hspace{10pt} 
        Otherwise, update $\alpha',\phi',g'$ by $\tilde c$, $\tilde\phi,\tilde g$, respectively.
    \STATE End iteration
\end{algorithmic}
\end{algorithm}

\subsection{Convergence of the modified NCG}

We have two results for the convergence.

\begin{lemma}\label{lma:sdc}
Let $\beta_k$ be defined by (\ref{MMPRP}) and $\mu = \frac{4\nu-1}{4\nu(1+\kappa)}$. Then  
\begin{equation}\label{cond:sdc}
    \langle d_{k+1},g_{k+1} \rangle \leq -\mu \|d_{k+1}\|_2\|g_{k+1}\|_2.
\end{equation}
\end{lemma}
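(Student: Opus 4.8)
The goal is the sufficient descent condition \eqref{cond:sdc} for the update direction $d_{k+1}=-g_{k+1}+\beta_kd_k$, where $\beta_k$ is the doubly capped quantity in \eqref{MMPRP}. I would start by splitting into the two cases dictated by which term attains the minimum in \eqref{MMPRP}. In either case $\beta_k\ge 0$ because the numerator of the first term uses $\langle\cdot\rangle_+$ and the second term is manifestly nonnegative. Computing the inner product directly,
\[
    \langle d_{k+1},g_{k+1}\rangle
    = -\|g_{k+1}\|_2^2 + \beta_k\langle d_k,g_{k+1}\rangle .
\]
The plan is to bound $\langle d_{k+1},g_{k+1}\rangle$ from above by $-\mu\|d_{k+1}\|_2\|g_{k+1}\|_2$; since $\|d_{k+1}\|_2\le \|g_{k+1}\|_2+\beta_k\|d_k\|_2$, it suffices to show
\[
    -\|g_{k+1}\|_2^2 + \beta_k\langle d_k,g_{k+1}\rangle
    \le -\mu\|g_{k+1}\|_2\big(\|g_{k+1}\|_2+\beta_k\|d_k\|_2\big),
\]
i.e. $(1-\mu)\|g_{k+1}\|_2^2 \ge \beta_k\langle d_k,g_{k+1}\rangle + \mu\beta_k\|d_k\|_2\|g_{k+1}\|_2$.

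For the term $\mu\beta_k\|d_k\|_2\|g_{k+1}\|_2$ I would use the second cap: $\beta_k\|d_k\|_2\le \kappa\|g_{k+1}\|_2$, so this term is at most $\mu\kappa\|g_{k+1}\|_2^2$. For the term $\beta_k\langle d_k,g_{k+1}\rangle$ I would use the first cap. Writing out the (uncapped) first candidate,
\[
    \beta_k^{\text{cand}}=\frac{\langle g_{k+1},\,g_{k+1}-g_k-\frac{\nu\|g_{k+1}-g_k\|^2}{\|g_k\|^2}d_k\rangle_+}{\|g_k\|_2^2},
\]
one sees that $\beta_k^{\text{cand}}\|g_k\|_2^2\langle d_k,g_{k+1}\rangle$ can be controlled exactly as in the MPRP analysis: the key algebraic identity is that, after multiplying through, the contribution of $d_k$ in the numerator against $\langle d_k,g_{k+1}\rangle$ produces a term $-\nu\frac{\|g_{k+1}-g_k\|^2}{\|g_k\|^2}\langle d_k,g_{k+1}\rangle^2$, and completing the square against the remaining $\langle g_{k+1},g_{k+1}-g_k\rangle\langle d_k,g_{k+1}\rangle$ piece via $2|uv|\le \frac{1}{\nu}u^2\|g_k\|^2 + \nu v^2\|g_k\|^{-2}$-type inequalities yields $\beta_k\langle d_k,g_{k+1}\rangle\le \frac{1}{4\nu}\|g_{k+1}\|_2^2$ (this is precisely the mechanism that gives the classical MPRP bound $\langle d_{k+1},g_{k+1}\rangle\le -(1-\frac1{4\nu})\|g_{k+1}\|_2^2$). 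Since capping $\beta_k^{\text{cand}}$ downward to the actual $\beta_k\ge 0$ can only decrease $\beta_k\langle d_k,g_{k+1}\rangle$ when that quantity is positive and leaves it nonpositive otherwise, the bound $\beta_k\langle d_k,g_{k+1}\rangle\le \frac{1}{4\nu}\|g_{k+1}\|_2^2$ survives the cap. Combining the two pieces,
\[
    \langle d_{k+1},g_{k+1}\rangle
    \le \Big(-1+\tfrac{1}{4\nu}+\mu\kappa\Big)\|g_{k+1}\|_2^2
    + (\text{terms matched by } -\mu\|d_{k+1}\|_2\|g_{k+1}\|_2),
\]
and choosing $\mu=\frac{4\nu-1}{4\nu(1+\kappa)}$ makes the bookkeeping close: $1-\frac{1}{4\nu}=\frac{4\nu-1}{4\nu}=\mu(1+\kappa)=\mu+\mu\kappa$, so the $\mu\kappa$ excess is exactly absorbed and what remains is $-\mu\|g_{k+1}\|_2^2-\mu\beta_k\|d_k\|_2\|g_{k+1}\|_2\le -\mu\|d_{k+1}\|_2\|g_{k+1}\|_2$.

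I expect the main obstacle to be the sign-handling around the $\langle\cdot\rangle_+$ truncation and the downward cap: one must verify carefully that replacing the raw MPRP numerator by its positive part, and then possibly replacing that by the smaller $\kappa\|g_{k+1}\|_2/\|d_k\|_2$, never breaks the inequality $\beta_k\langle d_k,g_{k+1}\rangle\le\frac{1}{4\nu}\|g_{k+1}\|_2^2$. The clean way to see this is to treat $\beta_k$ abstractly: we only use (i) $0\le\beta_k$, (ii) $\beta_k\|d_k\|_2\le\kappa\|g_{k+1}\|_2$, and (iii) $\beta_k\langle d_k,g_{k+1}\rangle\le\frac{1}{4\nu}\|g_{k+1}\|_2^2$; property (iii) holds for the raw MPRP numerator by the square-completion argument, holds trivially after truncation to the positive part when $\langle d_k,g_{k+1}\rangle\le 0$, and is preserved under further downward capping in the remaining case. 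Once (i)--(iii) are in place the final estimate is the short computation above, and no smoothness of $\phi$ beyond what is already assumed is needed.
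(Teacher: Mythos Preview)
Your proposal is correct and uses essentially the same ingredients as the paper's proof: the MPRP sufficient-descent bound $\beta_k^{\mathrm{MPRP}}\langle d_k,g_{k+1}\rangle\le\frac{1}{4\nu}\|g_{k+1}\|_2^2$, its preservation under the downward cap (the paper encodes this via $\beta_k=\rho_k\tilde\beta_k$ with $\rho_k\in[0,1]$, you via the sign case-split on $\langle d_k,g_{k+1}\rangle$), and the bound $\|d_{k+1}\|_2\le(1+\kappa)\|g_{k+1}\|_2$ from the second cap. The only difference is organizational: the paper first proves $\langle d_{k+1},g_{k+1}\rangle\le-\frac{4\nu-1}{4\nu}\|g_{k+1}\|_2^2$ and then converts one factor of $\|g_{k+1}\|_2$ to $\frac{1}{1+\kappa}\|d_{k+1}\|_2$, whereas you set up the target inequality directly and feed in both caps at once---same content, slightly different bookkeeping.
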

\begin{proof}
We rewrite $\beta_k = \rho_k \tilde\beta_k$ and  
$d_{k+1} =\rho_k\tilde d_{k+1}+(\rho_k-1)g_{k+1}$, where $\rho_k\in[0,1]$,
$\tilde\beta_k$ is the step in (\ref{MPRP}), and $\tilde d_{k+1} = -g_{k+1} +\tilde\beta_k d_k$. 
Since $\langle\tilde d_{k+1},g_{k+1}\rangle \leq (\frac{1}{4\nu}-1) \|g_{k+1}\|^2_2$,
\begin{align*}
    \langle d_{k+1},g_{k+1} \rangle 
    =&\ \rho_k\langle \tilde d_{k+1},g_{k+1} \rangle +(\rho_k-1)\|g_{k+1}\|_2^2\\
    \leq&\ \big(\rho_k(\frac{1}{4\nu}-1)+(\rho_k-1)\big)\|g_{k+1}\|_2^2
    \leq\frac{1-4\nu}{4\nu}\|g_{k+1}\|_2^2.
\end{align*}
By $\beta_k\leq \frac{\kappa\|g_{k+1}\|_2}{\|d_k\|_2}$, we also have that  $\|d_{k+1}\|_2\leq\|g_{k+1}\|_2+\beta_k\|d_k\|_2
\leq (1+\kappa)\|g_{k+1}\|_2$. The inequality (\ref{cond:sdc}) follows
since $\|g_{k+1}\|_2^2\geq \frac{1}{1+\kappa}\|d_{k+1}\|_2\|g_{k+1}\|_2$ and $\frac{1-4\nu}{4\nu}<0$.
$\hfill\square$
\end{proof}

Combining (\ref{wolfe1}), the inequality (\ref{cond:sdc}) guarantees the descent of $\{\phi(x_k)\}$. The NCG with the inexact line search discussed in the previous subsection and the modified step $\beta_k$ given in (\ref{MMPRP}) is convergent if $\phi$ is continuously differentiable and lower bounded. The convergence analysis is slightly different from that for the PRP step in \cite{SY2006}.

\begin{theorem}\label{conv_CG}
Starting with an arbitrary $x_0$, the NCG with inexact line search $\{\alpha_k\}$ satisfying the weak Wolfe condition (\ref{wolfe1}-\ref{wolfe2}) and steps $\{\beta_k\}$ defined in (\ref{MMPRP}) is convergent in the sense that $\{\phi(x_k)\}$ is monotone decreasing and converges and that $\nabla \phi(x_k)\!\to\! 0$. % as $k\to\infty$.
\end{theorem}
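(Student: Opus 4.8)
The plan is to adapt the standard convergence argument for sufficient-descent conjugate gradient methods (as in \cite{SY2006}) while replacing the Lipschitz/uniform-continuity hypothesis on $\nabla\phi$ by the mere continuous differentiability we have here; the inexact line search of Algorithm~\ref{alg:stepsize} and the cap in $\beta_k$ from (\ref{MMPRP}) are exactly what make this possible. First I would establish that $\{\phi(x_k)\}$ is monotone decreasing and convergent: Lemma~\ref{lma:sdc} gives the sufficient descent $\langle d_{k+1},g_{k+1}\rangle\le-\mu\|d_{k+1}\|_2\|g_{k+1}\|_2$, so $d_k$ is a descent direction, and the first Wolfe condition (\ref{wolfe1}) then forces $\phi(x_{k+1})<\phi(x_k)$ whenever $g_k\ne0$; since $\phi$ is lower bounded, $\phi(x_k)\downarrow\phi^*$ and $\phi(x_k)-\phi(x_{k+1})\to0$.

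Next I would argue by contradiction that $\nabla\phi(x_k)\to0$. Suppose not; then there is $\varepsilon>0$ and a subsequence with $\|g_k\|_2\ge\varepsilon$. From (\ref{wolfe1}) and Lemma~\ref{lma:sdc},
\[
\phi(x_k)-\phi(x_{k+1})\ \ge\ -\rho\alpha_k\langle g_k,d_k\rangle\ \ge\ \rho\mu\,\alpha_k\|d_k\|_2\|g_k\|_2,
\]
and since the left side is summable (telescoping, $\phi$ bounded below), $\sum_k\alpha_k\|d_k\|_2\|g_k\|_2<\infty$, hence $\alpha_k\|d_k\|_2\to0$ along the whole sequence and $\alpha_k\|d_k\|_2\,\|g_k\|_2\to0$; on the bad subsequence this gives $\alpha_k\|d_k\|_2\to0$. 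I would then combine this with the second Wolfe condition (\ref{wolfe2}): writing $s_k=x_{k+1}-x_k=\alpha_k d_k$ with $\|s_k\|_2\to0$, (\ref{wolfe2}) yields $\langle g_{k+1}-g_k,d_k\rangle\ge(\sigma-1)\langle g_k,d_k\rangle\ge(1-\sigma)\mu\|d_k\|_2\|g_k\|_2$, so
\[
\|g_{k+1}-g_k\|_2\ \ge\ (1-\sigma)\mu\,\|g_k\|_2\ \ge\ (1-\sigma)\mu\,\varepsilon
\]
on the bad subsequence — yet $\|s_k\|_2\to0$, and here is where continuous differentiability of $\phi$ enters: on the compact level set $\{\phi\le\phi(x_0)\}$ (which we may assume bounded; in our application $f$ has bounded level sets by (\ref{f})), $\nabla\phi$ is uniformly continuous, so $\|s_k\|_2\to0$ forces $\|g_{k+1}-g_k\|_2\to0$, a contradiction. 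To reach the conclusion I still need $\|d_k\|_2$ not to blow up too fast, or rather that $\alpha_k\|d_k\|_2\to0$ really does imply $\|s_k\|_2\to 0$ uniformly — which is immediate — and that the $\beta_k$-cap $\beta_k\le\kappa\|g_{k+1}\|_2/\|d_k\|_2$ keeps $\|d_{k+1}\|_2\le(1+\kappa)\|g_{k+1}\|_2$ bounded on the level set, so all iterates stay in a compact set and the uniform-continuity step is legitimate.

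The main obstacle is the passage that in \cite{SY2006} uses Lipschitz continuity of $\nabla\phi$: there one bounds $\|g_{k+1}-g_k\|_2\le L\|s_k\|_2$ directly, whereas here I must instead run the argument on the compact level set and invoke uniform continuity of $\nabla\phi$ there, after separately verifying (i) the level set $\{x:\phi(x)\le\phi(x_0)\}$ is bounded — true for our $f$ since $f(X)\ge\tfrac14\sum_i(1-\sigma_i^2)^2\to\infty$ as $\|X\|_F\to\infty$ — and (ii) that the cap in (\ref{MMPRP}) confines $\{d_k\}$ to a bounded set, so that $\{x_k\}$ never leaves the level set. A secondary subtlety is handling the restart case $d_k=0$ (then $\beta_{k-1}=0$ is reset and $d_k=-g_k$, which trivially satisfies (\ref{cond:sdc})), and confirming Algorithm~\ref{alg:stepsize} indeed terminates with a valid $\alpha_k$, which is exactly the content of the preceding lemma. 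Once these pieces are in place the contradiction closes and $\nabla\phi(x_k)\to0$ follows.
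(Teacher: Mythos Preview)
Your proposal is correct and follows essentially the same contradiction argument as the paper: both deduce $\|g_k\|\,\|s_k\|\to0$ from Lemma~\ref{lma:sdc} and (\ref{wolfe1}), pass to a subsequence with $\|g_{k_i}\|\ge\varepsilon$ (hence $\|s_{k_i}\|\to0$), and close via (\ref{wolfe2}). The paper's endgame differs only cosmetically: it writes the two Taylor expansions $\phi(x_{k_i+1})=\phi(x_{k_i})+\langle g_{k_i},s_{k_i}\rangle+o(\|s_{k_i}\|)$ and $\phi(x_{k_i})=\phi(x_{k_i+1})-\langle g_{k_i+1},s_{k_i}\rangle+o(\|s_{k_i}\|)$, adds them to obtain $\langle g_{k_i+1}-g_{k_i},s_{k_i}\rangle=o(\|s_{k_i}\|)$, and from (\ref{wolfe2}) concludes $1-\sigma\le o(\|s_{k_i}\|)/(\mu\|g_{k_i}\|\,\|s_{k_i}\|)\to0$, contradicting $\sigma<1$. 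You instead apply Cauchy--Schwarz to the same Wolfe-2 inequality to get $\|g_{k_i+1}-g_{k_i}\|\ge(1-\sigma)\mu\varepsilon$ and contradict this with uniform continuity of $\nabla\phi$ on the compact level set. These are the same analytic content (the paper's $o(\cdot)$ being uniform over the subsequence is justified precisely by continuity of $\nabla\phi$ plus boundedness of the iterates), and you are right to make the bounded-level-set hypothesis explicit---the theorem statement omits it, but both arguments need it. Your remarks about the $\beta_k$-cap bounding $\|d_k\|$ are unnecessary here: the iterates stay in the level set simply because $\phi(x_k)$ decreases, and $\|s_{k_i}\|=\alpha_{k_i}\|d_{k_i}\|\to0$ follows directly without controlling $\|d_k\|$ separately.
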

\proof 
We assume $g_k=\nabla \phi(x_k)\neq 0$ for each $k$  without loss of generalities, and let $s_k = \alpha_kd_k$. By Lemma \ref{lma:sdc},
$\langle g_k,s_{k}\rangle\leq-\mu\|g_k\|\|s_{k}\|\leq 0$. The Wolfe condition (\ref{wolfe1}) gives 
\[
    \phi(x_{k+1})-\phi(x_k)\leq\rho\langle g_k,s_k\rangle
    \leq-\rho\mu\|g_k\|\|s_{k}\|\leq0.
\]
It means that $\{\phi(x_k)\}$ is monotone decreasing. Hence, it is convergent since $\phi$ itself is lower bounded, which also implies that  $\|g_k\|\|s_{\!k}\|\to 0$ by the above inequalities. 

We further show that $\|g_k\|\to 0$. Otherwise, there is a subsequence $\{\|g_{k_i}\|_2\}$ with a positive lower bound. Correspondently, $\|g_{k_i}\|\|s_{\!k_i}\|\to 0$ implies that $\|s_{\!k_i}\|\to 0$. By the Taylor extensions
\[
    \phi(x_{k_i+1}) 
    = \phi(x_{k_i}) + \langle g_{k_i},s_{k_i}\rangle + o\big(\|s_{k_i}\|\big),\ \ 
    \phi(x_{k_i}) 
    = \phi(x_{k_i+1}) - \langle g_{k_i+1},s_{k_i} \rangle + o\big(\|s_{k_i}\|\big),
\]
and the Wolfe condition (\ref{wolfe2}) that gives $\langle g_{k_i+1},s_{k_i}\rangle \geq \sigma \langle g_{k_i},s_{k_i}\rangle$, we have that
$$
    o(\|s_{k_i}\|) 
    = \langle g_{k_i+1},s_{k_i} \rangle - \langle g_{k_i},s_{k_i} \rangle
    \geq (\sigma-1)\langle g_{k_i},s_{k_i}\rangle.
$$
Hence, 
$
    1-\sigma = \frac{o(\|s_{k_i}\|)}{-\langle g_{k_i},s_{k_i}\rangle}
    \leq \frac{o(\|s_{k_i}\|)}{\mu\|g_{k_i}\|\|s_{k_i}\|}\to 0
$ since $\{\|g_{k_i}\|\}$ has a positive lower bound, which implies $\sigma\leq 1$, a contradiction with $\sigma<1$.
$\hfill\square$
\endproof

\begin{algorithm}[t]
\setstretch{1.35}
\caption{Modified nonlinear conjugated gradient (MNCG) method}
\label{alg:ncg}
\begin{algorithmic}[1]
    \REQUIRE initial point $x$, parameters $\epsilon_1$, $\epsilon_2$, $\rho$, $\sigma$, $\nu$, $\kappa$, $\lambda$, and $k_{\max}^{\rm NCG}$.
    \ENSURE an approximate solution $x_*$ of $\min_x \phi(x)$ with the given accuracy
    \STATE Compute $\phi = \phi(x)$, $g = \nabla \phi(x)$, and set $d = -g$.
    \STATE For $k=1,\cdots, k_{\max}^{\rm NCG}$,
    \STATE \hspace{10pt} 
        Save $\phi_{old} = \phi$, and update $(x,\phi,g)$ by \textbf{Algorithm \ref{alg:stepsize}} with searching direct $d$.
    \STATE \hspace{10pt} 
         If $\|g\|_F< \epsilon_1$ and $\phi_{old}-\phi<\epsilon_2$, 
        then set $x^* = x$ and terminate the iteration. 
    \STATE \hspace{10pt} 
        Otherwise, compute $\beta$ by (\ref{MMPRP}) and update $d := -g+\beta d$.
    \STATE End for
\end{algorithmic}
\end{algorithm}

Algorithm \ref{alg:ncg} gives the details of NCG for minimizing a nonlinear function $\phi(x)$. We will use it to solve the exterior point model (\ref{prob:epm}). The algorithm performs very well in our tests. For instance, applying on a symmetric factorization $A = WW^T$ of a completely positive matrix of order 20000 with cp-rank 10, the algorithm can get a CPF with accuracy $10^{-14}$ within 150 iterations and 3 seconds, starting at the identity matrix of order $r$. As a comparison, using the same initial point, the alternative projection method given in \cite{HSS2014} gives an approximate CPF in the accuracy $10^{-11}$, which costs more than 450000 iterations and more than 1500 seconds.

\subsection{Postprocessing}

Generally, a solution $X$ of (\ref{prob:epm}) solve by the modified NCG is not exactly row-orthonormal since the algorithm terminates within a limit accuracy. We can get an approximate CPF with a nonnegative factor $\tilde B = (WX)_+$ truncated from $WX$. 

To improve the accuracy of the approximate CPF, we suggest postprocessing on the solution $X$. That is, find a row-orthonormal matrix $Q\in {\mathbb R}^{r\times r_+}$ nearest to $X$ at first, and then truncate $WQ$ to be a nonnegative $B_+ = (WQ)_+$. This $Q$ can be a solution to the Procrustes problem $\min_{QQ^T = I}\|X - Q\|_F$. That is, $Q = UV^T$ when we have the singular value decomposition $X = U\Sigma V^T$ of $X$, where $U$ is an orthogonal matrix of order $r$ and $V\in {\mathbb R}^{r\times r_+}$ is column-orthonormal. The following estimation gives insight into the improvement. 

Let $N = (WX)_-$ for simplicity, then $(WX)_+ = WX-N$, and 
\begin{align*}
    \|A-(WX)_+(WX)_+^T\|_F 
    &= \|A-(WX)(WX)^T + (WX)N^T+N(WX)^T-NN^T\|_F\\
    &\leq \|A-(WX)(WX)^T\|_F+(2\|WX\|_2+\|N\|_{\infty})\|N\|_F,
\end{align*}
where $\|N\|_{\infty}$ is the largest absolute entry of $N$.
When $X=Q$, it is simplified as
\[
    \|A-(WQ)_+(WQ)_+^T\|_F \leq  \big(2\sqrt{\|A\|_2}+\|N\|_{\infty}\big)\|N\|_F.
\]
This postprocessing may slightly increase the negative component $\|N\|_F$, but it vanishes the term $\|A-(WX)(WX)^T\|_F$, and yields a significant decreasing of the approximate error eventually. In our experiments, we always adopt the postprocessing and take the orthogonal projection $Q$ of a solution $X$ as an eventual output.

\section{Potential issues influencing the CPF}\label{sect:explore}

The CPF was thought to be NP-hard in \cite{GD2018} without proofs, even if the column number of a nonnegative factor is relaxed to be larger than the cp-rank.\footnote{We say $A = BB^T$ is a weak CPF later if the column number of the nonnegative $B$ is larger than the cp-rank $r_{cp}$ of $A$, distinguishing it from the strict CPF whose factor has $r_{cp}$ columns.} That is, one is not able to get an algorithm to compute such a CPF for all completely positive matrices within polynomial time of the matrix order. However, it may be possible to get a good factorization with high accuracy for some completely positive matrices within acceptable time. It is tricky that we know less about what kind of completely positive matrices whose CPF is easy or hard to obtain. 

In this section, we will explore some potential issues that may influence the CPF numerically, implemented by our exterior point method using the modified NCG that is given in the previous section.
We focus on the three issues on the truly existed nonnegative factor $B$ of a completely positive matrix $A$: the distribution of its column norms, its sparsity, and its approximately rank deficiency. It is not clear whether a fixed $A$ has multiple CPFs whose nonnegative factors $B$ have quite different properties on these three issues.\footnote{It is more likely for $A$ with a dense nonnegative factor to have multiple CPFs.} However, we do not find evident differences in our experiments when $A$ has a spare nonnegative factor $B$.

Four kinds of distributions of the column norms of $B$ are considered: constant, linear, convex, or concave. In each set of those $B$'s with the same kind of the column distribution, we also consider the influences of the column number (the cp-rank of $A$), sparsity, and approximate rank deficiency of $B$ to the CPF. Synthetic completely positive matrices are randomly constructed with these properties. Because of the construction, we always have that $r_{cp}(A) = r(A)$ for these matrices. Hence, we set $r_+ =r(A)$. For simplicity, we also fix the order of these synthetic matrices as $n = 200$. A few completely positive matrices with cp-rank larger than rank reported in the literature and the synthetic completely positive matrices in a larger scale ($n=20000$ for example) will be tested in the comparison section given later.

As mentioned in Section \ref{subsect:OPM}, we always normalize the factor $W$ from the symmetric factorization $A = WW^T$ to $\tilde W = D^{-1}W$ with $D = \diag(\|w_1\|_2,\cdots,\|w_n\|_2)$ before its CPF. The postprocessing discussed in the last section is also adopted. That is, we use the orthogonal projection $Q$ as the output and set $\tilde B=(WQ)_+$ as an approximate nonnegative factor of $A$. We measure the factorization accuracy by the relative error
\begin{align}
	\mbox{Error}(\tilde A) = \frac{\|A - \tilde B\tilde B^T\|_F}{\|A\|_F}.
\end{align}

\subsection{Column distribution} \label{subsect:distribution}

Completely positive matrices in the form $A = BB^T$ can be easily constructed by randomly choosing a nonnegative factor $B$ with a given number of columns. Generally, the cp-rank of such a matrix $A$ is also equal to its rank. We consider four sets of $A$'s with the different distributions of column norm sequence $\{b_i\}$ of $B$: One is that with constant $b_i = 1$ for all $i$, and the others have the same form as
\begin{align}\label{b_i}
    b_i = 1-(1-b_{\min})\frac{t_i-t_1}{t_r-t_1} \in [b_{\min},1], \quad i=1,\cdots,r,
\end{align}
where $r$ is the number of columns, $t_i = i^d$ and $d=-10^{-1}$, 1, and 2, respectively. The different values of $d$ determine the different sharp of $\{b_i\}$: convex ($d=-10^{-1}$), linear ($d = 1$), and concave ($d = 2$). The parameter $b_{\min}$ determines how small some of columns of $B$ can be in these three types. 

The four types of $B$'s are constructed as follows. We first choose $\hat B = (\hat b_{ij})$ or order $n\times r$ with entries uniformly distributed in the interval $(0,1)$, and then normalize each column of $\hat B$ to $B = (b_{ij})$ that has a given column norm sequence $\{b_i\}$. That is, the entries are $b_{ij} = \hat b_{ij}\big(b_j/\sqrt{\sum_k \hat b_{kj}^2}\big)$.

\begin{table}[t]
\centering
\resizebox{\textwidth}{!}{ %
\begin{tabular}{|c|@{\ }c@{\ }c@{\ }c@{\ }c|c@{\ }c@{\ }c@{\ }c@{\ }|c@{\ }c@{\ }c@{\ }c|c@{\ }c@{\ }c@{\ }c|}\hline\hline
$r$& \multicolumn{4}{c|}{  5}
   & \multicolumn{4}{c|}{ 10}
   & \multicolumn{4}{c|}{ 15}
   & \multicolumn{4}{c|}{ 20}\\ \hline
\diagbox{\!\!Type\!\!\!}{\!\!$b_{\min}$\!}
   & $10^{-1}$  & $10^{-2}$  & $10^{-3}$  & $10^{-4}$ 
   & $10^{-1}$  & $10^{-2}$  & $10^{-3}$  & $10^{-4}$ 
   & $10^{-1}$  & $10^{-2}$  & $10^{-3}$  & $10^{-4}$ 
   & $10^{-1}$  & $10^{-2}$  & $10^{-3}$  & $10^{-4}$  \\ \hline
constant
   & \multicolumn{4}{c|}{ 100}
   & \multicolumn{4}{c|}{ 45}
   & \multicolumn{4}{c|}{ 98}
   & \multicolumn{4}{c|}{100}\\ 
 linear 
   &  90 &  99 &  99 &  99
   &  98 & 100 & 100 & 100 
   & 100 & 100 & 100 & 100 
   & 100 & 100 & 100 & 100\\ 
  convex 
   &  87 &  97 &  97 &  98 
   &  99 & 100 & 100 & 100 
   & 100 & 100 & 100 & 100 
   & 100 & 100 & 100 & 100\\ 
concave 
   &  92 & 100 & 100 & 100 
   &  97 &  99 &  98 &  99 
   &  99 & 100 & 100 & 100 
   & 100 & 100 & 100 & 100\\ 
\hline\hline
\end{tabular}
}
\caption{Percentage of CPF by EPM achieving accuracy $\varepsilon=10^{-13}$, dense factors, and $n = 200$}
\label{tab:dense}
\end{table}

Table \ref{tab:dense} lists the percentages of tested matrices whose CPF can be successfully obtained by our exterior point method (EPM) with the accuracy Error$(\tilde A)<\varepsilon = 10^{-13}$ among 1000 repeats in each type of column norm distribution with fixed $b_{\min}$ and $r$.\footnote{For constant $\{b_i\}$, the percentages are that among 4000 tests.} The parameters are also fixed for all the tests as
\begin{align}\label{parameter setting}
    \lambda = 0.1,\ \rho = 0.1, \ 
    \sigma = 0.4,\  \nu = 1, \ \kappa = 1000, \ 
    \epsilon_1 = 10^{-8}, \ \epsilon_2 = 10^{-32}.
\end{align}
In this experiment, starting at a randomly chosen orthogonal matrix, our algorithm works very well for almost all the cases, except the constant case with $r = 10$. There are only about 45\% matrices of rank $r=10$ whose factorization errors can achieve the accuracy $10^{-13}$ and about 53\% matrices have factorization errors larger than $10^{-4}$. Similar phenomenon also occurs when the matrix size $n$ is larger. The CPF is not difficult for matrices with a dense factor $B$ whose column norms are distributed linearly, convexly, or concavely, except in a spacial case shown in the following phenomenon. 

\vspace{10pt}\noindent 
Phenomenon A. {\it For a dense and full rank $B\geq 0$ with approximately equal column norms, if it has a special column number depending on its row number, the CPF of $A = BB^T$ is relatively difficult than others.}
\vspace{10pt}

It is a bit puzzling. A similar phenomenon also occurs when $B$ is sparse. We will show it in the next subsection.

\begin{table}[t]
\centering
\resizebox{\textwidth}{!}{ 
\begin{tabular}{|r|cccccccc|cccccccc|}\hline\hline
\!\!Type\! & \multicolumn{8}{c|}{constant}
     & \multicolumn{8}{c|}{linear, $b_{\min}=0.1$}\\\hline
\diagbox{\!$s$\!}{$r$} 
      &   6 &   8 &  10 &  12 &  14 &  16 &  18 &  20
      &   6 &   8 &  10 &  12 &  14 &  16 &  18 &  20\\\hline
  1\% & 100 &  77 &  46 &  47 &  90 & 100 & 100 & 100
      &  91 &  92 &  96 &  96 & 100 & 100 & 100 & 100 \\
  4\% &  95 &  61 &  19 &  21 &  82 &  99 & 100 & 100
      &  53 &  31 &  52 &  82 & 100 & 100 & 100 & 100 \\
  7\% &  97 &  67 &  22 &   3 &  52 & 98  & 100 & 100
      &  54 &  49 &  22 &   7 &  58 & 90  & 100 & 100 \\
10\% & 98    & 73    & 19    & 0     & 5     & 67    & 98    & 100   & 66    & 57    & 45    & 26    & 5     & 36    & 93    & 100 \\
13\% & 99    & 78    & 28    & 0     & 0     & 10    & 76    & 100   & 63    & 55    & 56    & 42    & 8     & 0     & 21    & 82 \\
16\% & 99    & 88    & 30    & 3     & 0     & 0     & 7     & 80    & 61    & 53    & 52    & 46    & 31    & 3     & 0     & 22 \\
19\% & 100   & 94    & 60    & 4     & 0     & 0     & 0     & 9     & 53    & 60    & 56    & 46    & 50    & 22    & 2     & 0 \\
22\% & 99    & 96    & 66    & 18    & 0     & 0     & 0     & 0     & 62    & 60    & 54    & 63    & 50    & 52    & 17    & 0 \\
25\% & 100   & 100   & 89    & 36    & 0     & 0     & 0     & 0     & 71    & 54    & 57    & 55    & 49    & 63    & 43    & 6 \\
38\% & 100   & 99    & 97    & 77    & 15    & 0     & 0     & 0     & 71    & 58    & 58    & 67    & 59    & 55    & 52    & 23 \\
31\% & 100   & 100   & 99    & 88    & 55    & 6     & 0     & 0     & 77    & 62    & 46    & 61    & 53    & 55    & 50    & 28 \\
34\% & 100   & 100   & 99    & 100   & 85    & 42    & 5     & 0     & 73    & 65    & 57    & 59    & 58    & 65    & 55    & 44 \\
37\% & 100   & 100   & 100   & 97    & 95    & 81    & 34    & 1     & 90    & 70    & 57    & 64    & 67    & 63    & 55    & 46 \\
40\% & 100   & 100   & 100   & 100   & 98    & 91    & 80    & 30    & 92    & 74    & 69    & 60    & 58    & 64    & 59    & 48 \\
\hline\hline
\end{tabular}
}
\caption{Percentage of CPF achieving accuracy $\varepsilon=10^{-13}$ depending on $s=s(B)$ and $r = r_{cp}(A)$ ($n=200$)}
\label{tab:sparse}
\end{table}

\subsection{Sparsity} \label{subsect:sparsity}

Besides the distribution of the column norms, the sparsity of factor $B$ is another issue that may affect the CPF. To show this phenomenon, we modify the construction in the previous subsection by vanishing partial smallest entries of $\hat B$ with a given sparsity in percentage, without modifications on the normalization for having a special column distribution. Generally, the sparsity strategy does not change the rank and cp-rank if the sparsity is not large. However it affects the difficulty of CPF. \footnote{If the sparsity is large, some rows of $\hat B$ may be zero. These rows should be deleted and the size $n$ of the resulting $A$ is slightly reduced.}

Table \ref{tab:sparse} lists the percentages of tested matrices whose CPF errors are smaller than  $10^{-13}$ among 100 repeats with the constant or linear distribution of the column norms (the results for concave or convex distribution can be referred in Table \ref{tab:rank-deficient}). 
It shows that the hardness of CPF does not monotonically depend on the rank/cp-rank or the sparsity of a nonnegative factor $B$. Very interestingly, the hardness is tightly related to a special rank-sparsity boundary.

\vspace{10pt}\noindent
Phenomenon B. {\it For each column norm distribution, there is a special rank-sparsity boundary such that the closer to this boundary the rank-sparsity pair of $B$ is, the more difficult the CPF of $A=BB^T$ is.}
%\vspace{10pt}

\subsection{Approximate cp-rank deficiency} \label{subsect:deficiency}

A small value of $b_{\min}$ means an approximately deficient cp-rank of the completely positive matrix $A$ tested in the previous subsections. This is because we can rewrite $A = \sum_i^r d_iu_iu_i^T$ with nonnegative unit vectors $\{u_i\}$ and $d_r = b_{\min}^2$. For instance, if $b_{\min} = 10^{-4}$, then $d_r = 10^{-8}$. Clearly, deleting the last column of $B$ just slightly modifies $A$ to be a completely positive matrix $\hat A=\sum_i^{r-1} d_iu_iu_i^T$ that is very close to $A$ with $\|A-\hat A\|_F = d_r$, but $\hat A$ has a smaller cp-rank. The approximate deficiency does not affect the CPF very much if the nonnegative factor $B$ is dense, as shown in Table \ref{tab:dense}. However, as the sparsity is increased, the influence of approximate cp-rank deficiency to the CPF is more and more evident.

\begin{table}[t]
\centering
\resizebox{\textwidth}{!}{ 
\begin{tabular}{|r|cccc|cccc|cccc|cccc|}\hline\hline
\!\!Type\! & \multicolumn{8}{c|}{convex} & \multicolumn{8}{c|}{concave}\\\hline
\!\!$b_{\min}$ 
      & \multicolumn{4}{c|}{$10^{-1}$} & \multicolumn{4}{c|}{$10^{-2}$}
      & \multicolumn{4}{c|}{$10^{-1}$} & \multicolumn{4}{c|}{$10^{-2}$}\\\hline
\diagbox{\!$s$\!}{$r$} 
      &   8 &  10 &  12 &  14 &   8 &  10 &  12 &  14
      &   8 &  10 &  12 &  14 &   8 &  10 &  12 &  14\\\hline
     1\%& 87    & 98    & 99    & 100   & 100    & 100   & 100   & 100  
        & 97    & 94    & 97    & 100   & 98     & 100    & 99   & 98 \\
     4\%& 29    & 50    & 87    & 99    & 21    & 64    & 93    & 98 
        & 41    & 51    & 69    & 92    & 23    & 62    & 76    & 96 \\
     7\%& 50    & 27    & 12    & 55    & 0     & 1     & 8     & 52 
        & 41    & 23    & 10    & 47    & 0     & 2     & 5     & 44 \\
    10\%& 63    & 52    & 11    & 3     & 1     & 0     & 0     & 1    
        & 54    & 47    & 9     & 2     & 0     & 0     & 0     & 1 \\
    13\%& 73    & 63    & 44    & 8     & 2     & 0     & 0     & 0     
        & 51    & 49    & 35    & 0     & 0     & 0     & 0     & 0 \\
    16\%& 76    & 57    & 56    & 33    & 8     & 0     & 0     & 0   
        & 59    & 49    & 43    & 19    & 2     & 0     & 0     & 0 \\
    19\%& 69    & 74    & 73    & 54    & 6     & 0     & 0     & 0  
        & 55    & 56    & 50    & 35    & 2     & 0     & 0     & 0 \\
    22\%& 69    & 66    & 67    & 59    &22     & 1     & 0     & 0  
        & 52    & 64    & 46    & 49    & 0     & 0     & 0     & 0 \\
    25\%& 63    & 73    & 65    & 64    &33     & 3     & 1     & 0  
        & 56    & 47    & 57    & 46    & 3     & 1     & 0     & 0 \\
    28\%& 72    & 69    & 74    & 67    &39     & 4     & 0     & 0  
        & 62    & 53    & 54    & 53    & 2     & 1     & 0     & 0 \\
    31\%& 78    & 77    & 74    & 68    &41     & 9     & 2     & 0  
        & 58    & 65    & 47    & 44    &14     & 3     & 0     & 0 \\
    34\%& 75    & 69    & 70    & 75    &53     &18     & 0     & 0 
        & 68    & 59    & 56    & 56    &22     & 2     & 0     & 0 \\
    37\%& 72    & 73    & 72    & 73    &59     &36     & 2     & 0  
        & 62    & 60    & 55    & 48    &40     & 0     & 0     & 0 \\
    40\%& 82    & 73    & 78    & 72    &53     &32     & 7     & 0 
        & 65    & 53    & 55    & 44    &49     & 7     & 1     & 0 \\
    \hline\hline
\end{tabular}
}
\caption{Dependence of successful CPF ($\varepsilon = 10^{-13}$) on approximate cp-rank deficiency}
\label{tab:rank-deficient}
\end{table}

Table \ref{tab:rank-deficient} illustrates the phenomenon. We test two values of parameter $b_{\min}$ as $10^{-1}$ and $10^{-2}$ in the construction of completely positive matrices with concave distribution or convex distribution of column norms. As we decrease $b_{\min}$ from $10^{-1}$ to $10^{-2}$, the percentage of successful CPF is decreased evidently when the sparsity of $B$ is not ignorable,\footnote{In the relatively dense case, the percentage of successful factorization is slightly increased as $b_{\min}$ becomes smaller. This phenomenon will be explained in the next section.} especially nearby the rank-sparsity boundary mentioned in Phenomenon B. We summarize it as

\vspace{10pt}\noindent
Phenomenon C. {\it Approximate cp-rank deficiency significantly aggravates the CPF nearby the special rank-sparsity boundary in Phenomenon B.}
\vspace{10pt}

One explanation is that there are many rows of the sparse nonnegative factor alive at the boundary of the nonnegative cone $R_+^{r_+}$ and it is hard to accurately pursue these rows.

\section{Improvements}\label{sect:improvement}

In this section, we consider two strategies to increase the possibility of getting a CPF with acceptable accuracy. One is the relaxation of cp-rank, in which the column number of factor $B$ is relaxed from the exact cp-rank $r_{cp}$ to a larger integer $r_+$. Our algorithm works in this case without any modifications, just starting at an initial matrix, for example, an arbitrarily chosen row-orthonormal matrix, of order $r\times r_+$. The second one is a fast restart strategy. We will give a new stopping criterion based on Theorem \ref{thm:fdf} to reduce the computational cost when the restart strategy is adopted.

\subsection{Weak CPF}

By {\it weak} CPF, we mean a symmetric factorization $A = \tilde B\tilde B^T$ with a nonnegative factor $\tilde B$ with a relaxed column number $r_+$ larger than the cp-rank of $A$, distinguishing it with the {\it strict} CPF with strict $r_{cp}$ columns in its nonnegative factor. Relative to the strict factorization, the weak problem is a bit easier if $A$ is not approximately cp-rank deficient. A rough but reliable explanation is that increasing the column number can enlarge the solution set, and most rows of a solution $\tilde B$ are far from the boundary of the nonnegative cone in a higher dimensional space.

\begin{figure}[t]
\centering
\includegraphics[width = 0.25\linewidth, height = 0.3 \linewidth]{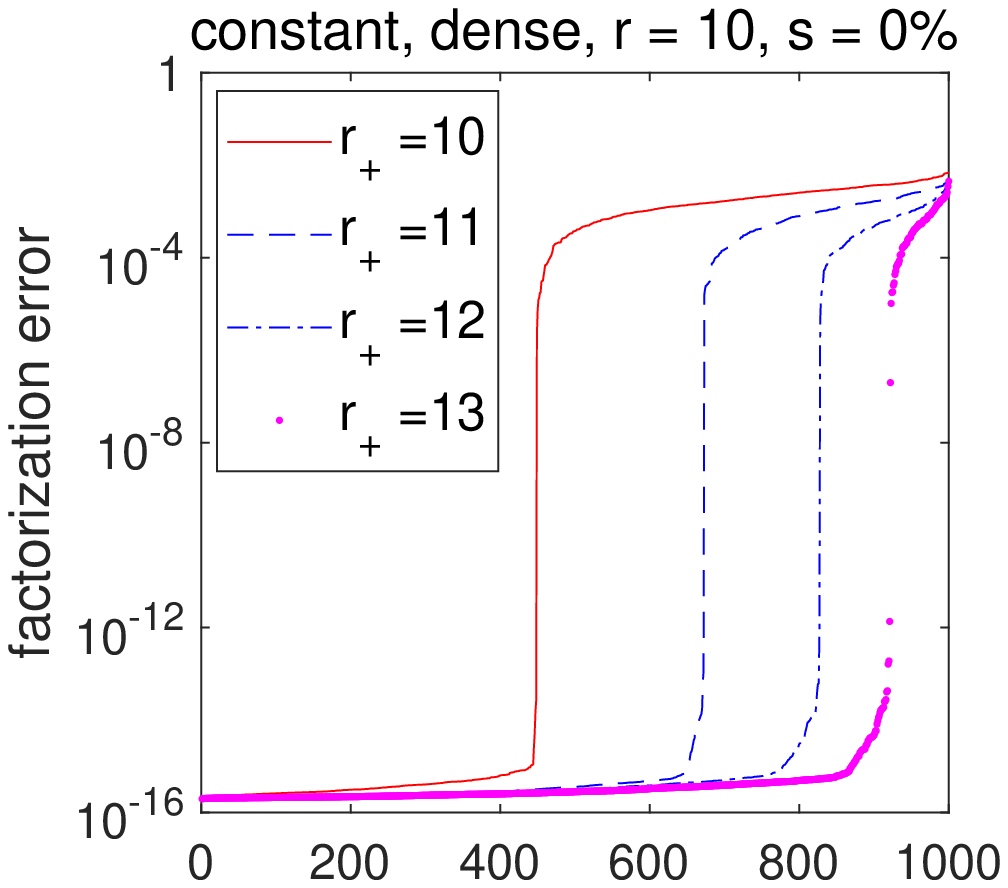}
\hspace{-9pt}
\includegraphics[width = 0.25\linewidth, height = 0.3 \linewidth]{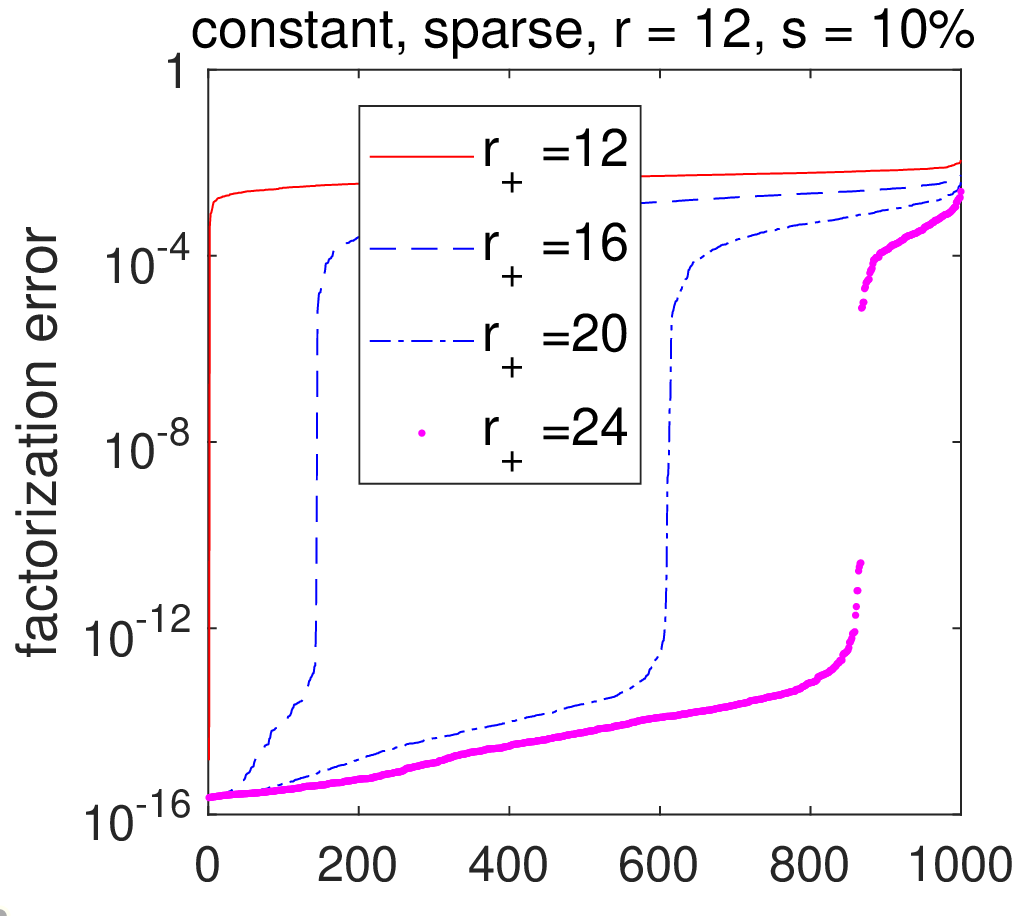}
\hspace{-9pt}
\includegraphics[width = 0.25\linewidth, height = 0.3 \linewidth]{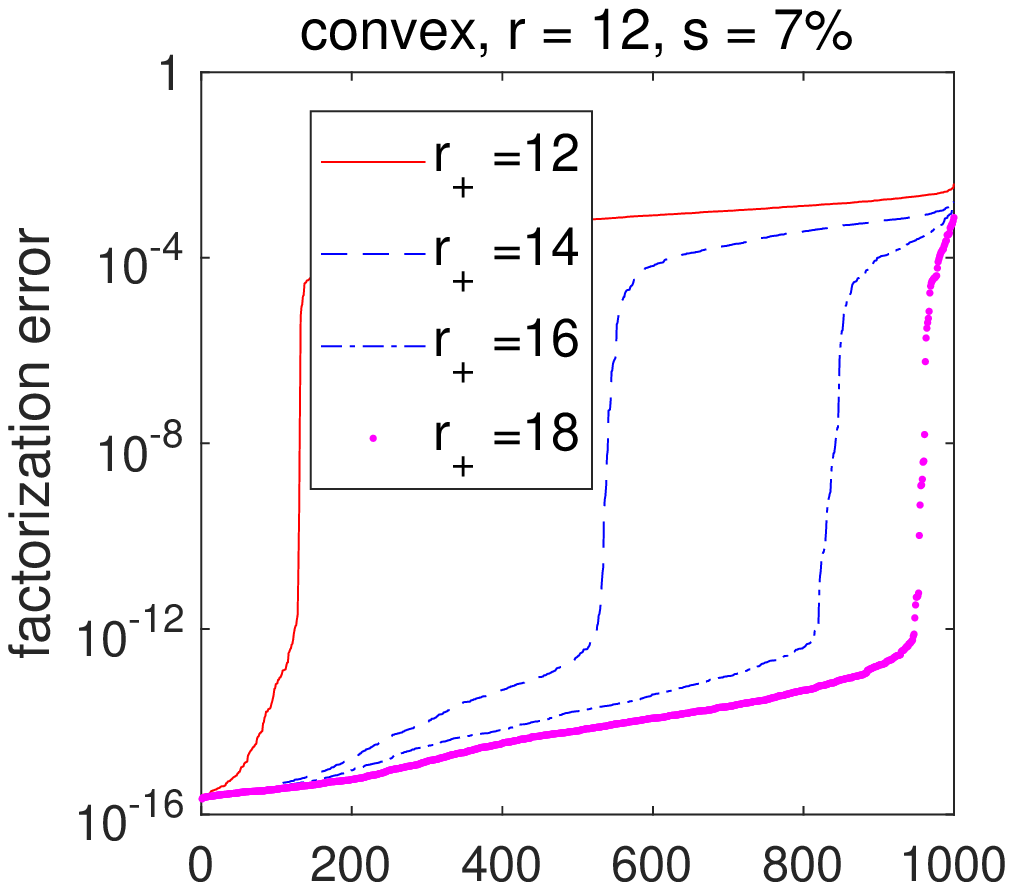}
\hspace{-9pt}
\includegraphics[width = 0.25\linewidth, height = 0.3 \linewidth]{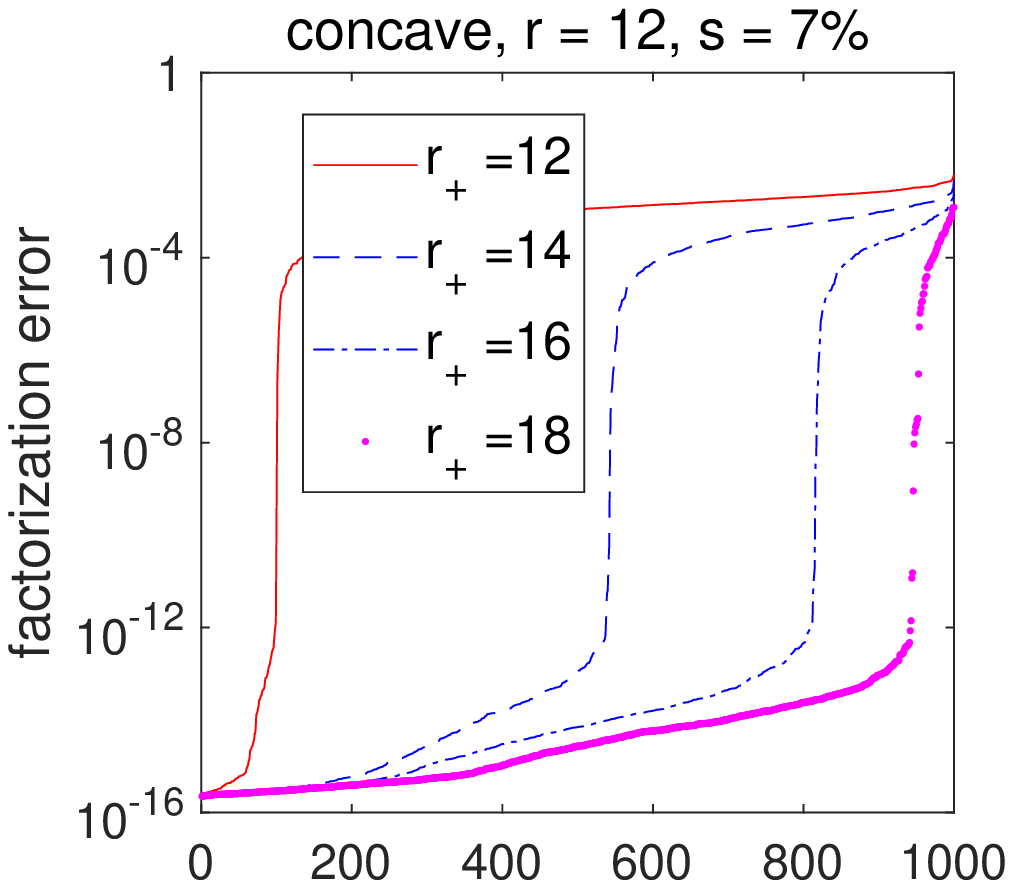}
\caption{Weak CPF errors of 1000 tests on each of four matrices without approximate cp-rank deficiency: constant-type dense factor ($r = 10$), sparse factors ($r = 12$) in the types 'constant' ($s(B) = 10\%$), 'convex' and 'concave' ($b_{\min} = 10^{-1}$, $s(B) = 7\%$), respectively}
\label{fig:cprank}
\end{figure}

\begin{figure}[t]
\centering
\includegraphics[width = 0.32\linewidth, height = 0.3 \linewidth]{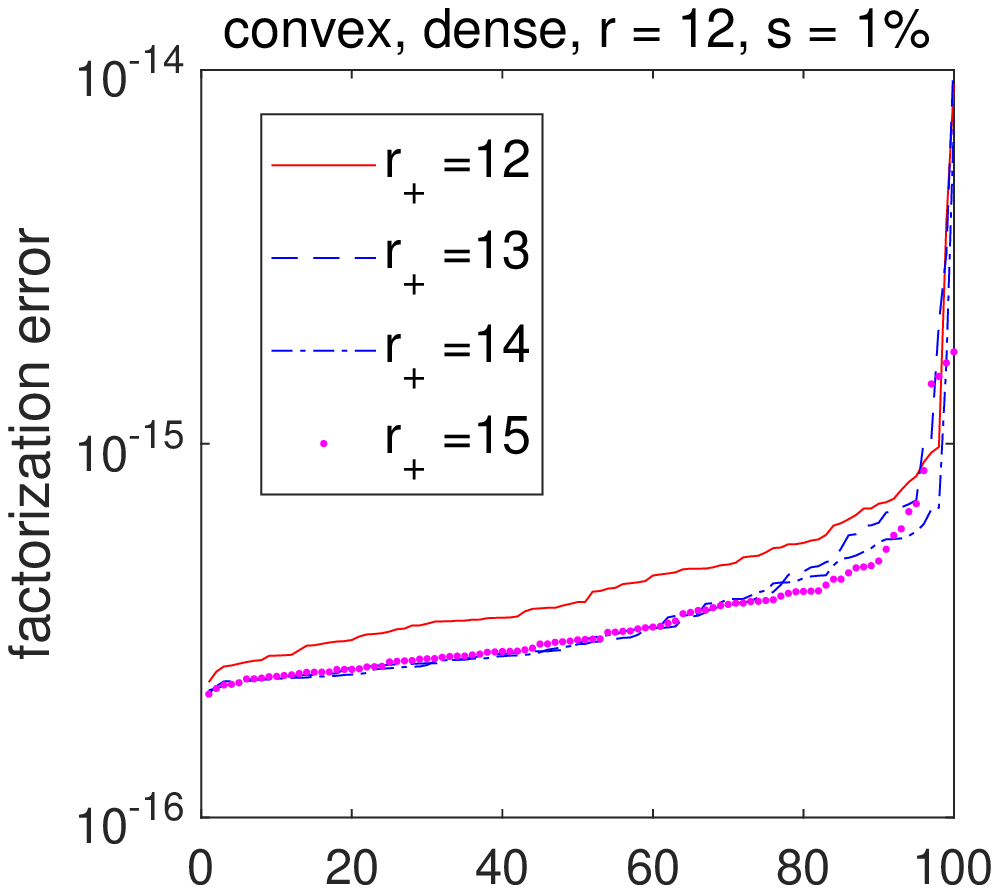}
\hspace{-9pt}
\includegraphics[width = 0.32\linewidth, height = 0.3 \linewidth]{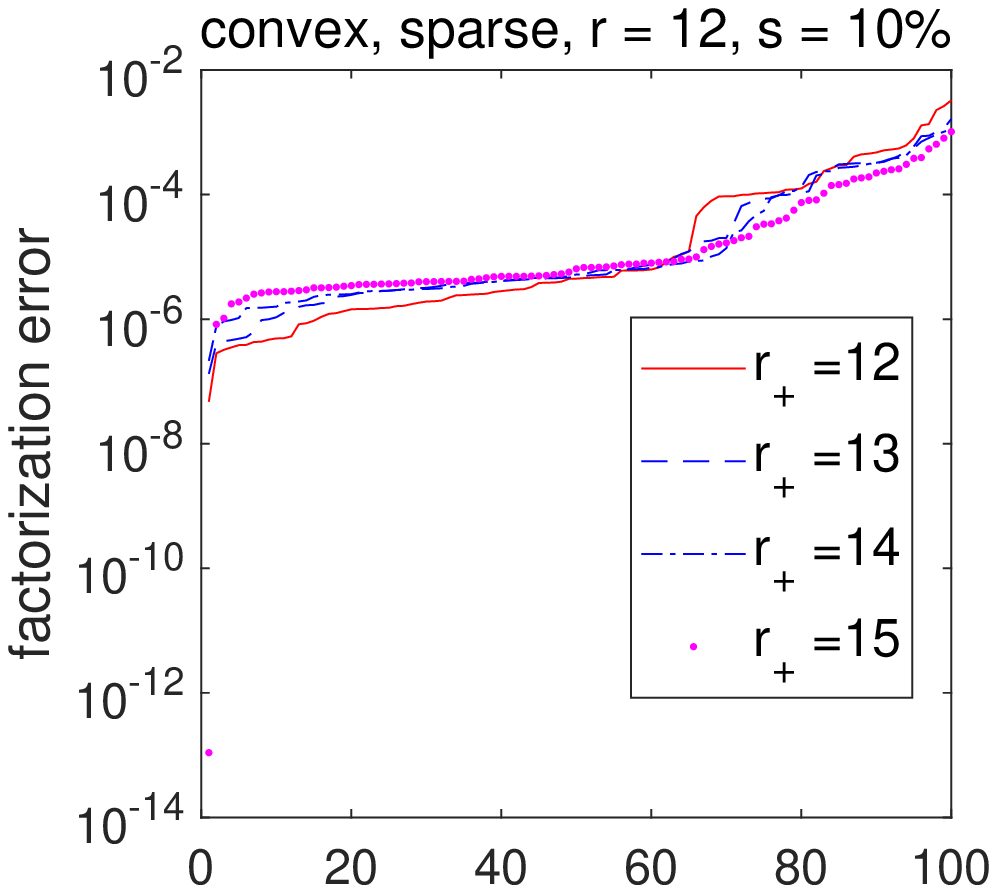}
\hspace{-9pt}
\includegraphics[width = 0.32\linewidth, height = 0.3 \linewidth]{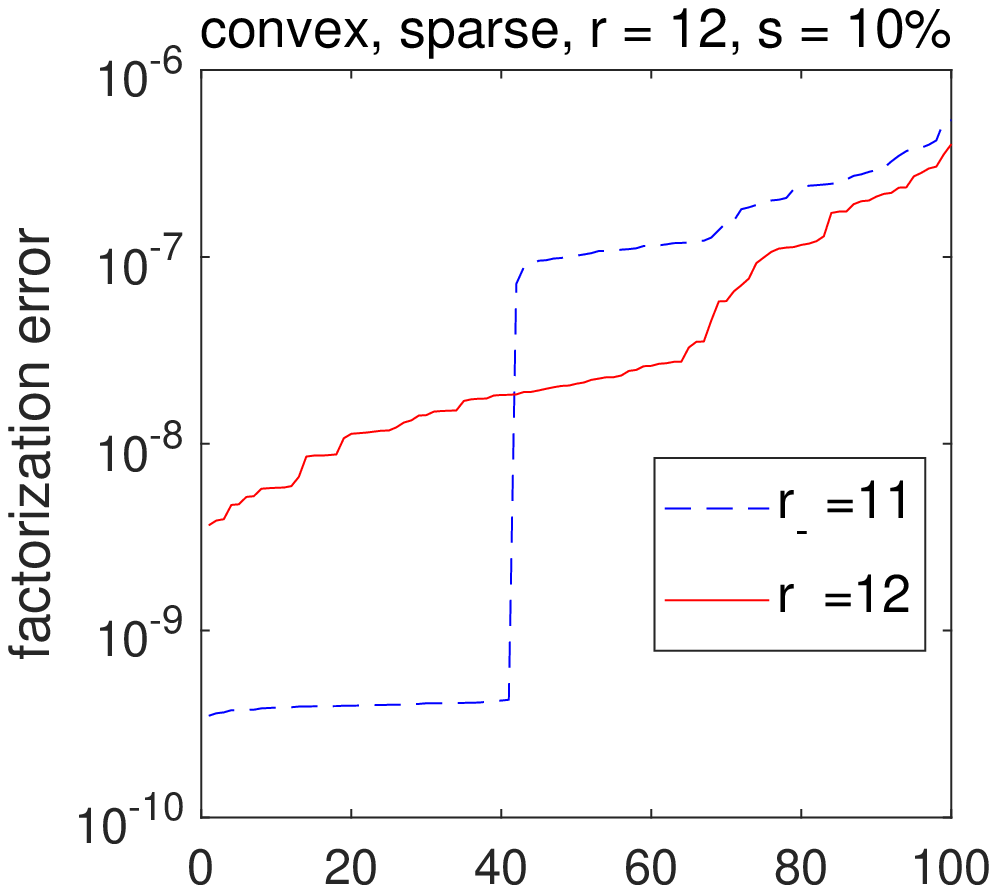}
\caption{Weak CPF errors of 100 tests on two matrices with approximate deficient cp-rank ($b_{\min} = 10^{-4}$) with dense $B$ (left) or sparse $B$ (middle). The left one illustrates the improvement of approximate CPF on a matrix with sparse $B$ like that shown in the middle panel}
\label{fig:relaxedCP_deficiency}
\end{figure}

The relaxation strategy can significantly decrease the hardness of strict CPF on matrices without cp-rank deficiency, whatever $B$ is dense or not. 
The left two panels of Figure \ref{fig:cprank} illustrate the phenomenon on four matrices whose strict CPF are difficult as shown in Tables \ref{tab:dense} and \ref{tab:sparse}. One of the matrices has a dense factor ($r = 10$) with constant-type column norms, and the other three matrices have sparse factors ($r = 12$) with constant-, convex-, or concave-type, and $s(B) = 10\%, 7\%, 7\%$, respectively. For each of them, we repeat the factorization for 1000 times, starting at an arbitrarily chosen row-orthonormal matrix of order $r\times r_+$. In the dense case, due to the increased column number $r_+$, the improvement is significant in the sense that it is more likely to get a weak CPF within a higher accuracy. In the sparse case, the algorithm cannot get a strict CPF with accuracy smaller than $10^{-4}$ in these repeated tests. However, if we increase the column number from $r_{cp} = 12$ to $r_+=18$ (or $r_+ = 24$ for the 'constant'-type), the factorization accuracy of weak CPF can be increased to $10^{-12}$ on about 900 tests among the 1000 tests. 

However, the improvement of weak CPF is limited when the matrix has an approximately deficient cp-rank. As the strict CPF, the weak CPF also works well if the factor $B$ is dense. We illustrate the performance in the left panel of Figure \ref{fig:relaxedCP_deficiency}. The improvement exists but is very slight. It partially explains the slightly higher percentage listed in the first row of Table \ref{tab:rank-deficient}, corresponding to smaller $b_{\min}$. However, the relaxation strategy may lose its efficiency when $A$ is approximately cp-rank deficient and $B$ is sparse, as illustrated in the middle panel of Figure \ref{fig:relaxedCP_deficiency}.

To partially address the difficulty when $A$ is approximately cp-rank deficient and has a sparse factor $B$, we may use the CPF of its lower-rank modification $\tilde A = \tilde W\tilde W^T$ as an approximate CPF of the original $A$ if $\tilde A$ is also nonnegative, where $\tilde W$ is the $W$ whose small columns are deleted. That is, apply the algorithm on $\tilde W$ with a column number $r_-$ smaller than $r_{cp}$. The right panel of Figure \ref{fig:relaxedCP_deficiency} shows the improvement when the strategy of column shrinking is adopted. There are about more than 40\% tests, in which the factorization error can be reduced. However, because of the perturbation existed in the input $\tilde W$, the factorization error to the original $A$ cannot be smaller than $b_{\min}^2$ in scale theoretically.

\subsection{Restart EPM}

Basically, convergence behaviour of the exterior point iteration may depend on the initial point. There are two extreme phenomena that may occur in the iteration: 
\begin{itemize}
    \item The CPF, whatever it is strict or weak, is easy for some matrices -- the iteration algorithm converges quickly, starting at almost any orthogonal matrix.
    \item The CPF is difficult for some matrices -- starting at almost all points, the iteration algorithm always drops to a local minimum quickly or converges to a global optimal solution very slowly. 
\end{itemize} 
\noindent
In other cases, the convergence may depend on the initial point significantly.

\begin{algorithm}[t]
\setstretch{1.35}
\caption{Restart exterior point method (REPM) for the CPF}
\label{alg:REPM}
\begin{algorithmic}[1]
  \REQUIRE {$W$, $r_{cp}$, $\rho$, $\sigma$, $\nu$, $\kappa$, $\lambda$, $\epsilon_f$, 
     $\epsilon_{d\!f}$, $k_{\max}^{\rm NCG}$, and $k_{\rm total}^{\rm REPM}$.}
  \ENSURE Row-orthonormal matrix $Q$ of order $r\times r_{cp}$.
  \STATE Arbitrarily choose a row-orthonormal $X_0\in{\mathbb R}^{r\times r_{cp}}$,     and set $k=0$.
  \STATE Starting with $X_0$, run Algorithm 2 within at most $k_{\max}^{\rm NCG}$  
     iterations until (\ref{opt_local}) or (\ref{opt_global}) is satisfied, 
     and count the iteration number $k_{NCG}$ in this NCG procedure. 
  \STATE Update $k:=k+k_{NCG}$. If $k\geq k_{\rm total}^{\rm REPM}$, terminate the restart procedure. 
  \STATE If (\ref{opt_local}) is satisfied or $k_{\max}^{\rm NCG}$ is achieved, reset $X_0 = -X$, and go to Step 2.
  \STATE If (\ref{opt_global}) holds, continue the iteration of Algorithm 2 until $f(X_k)-f(X_{k-1})<\epsilon_{d\!f}$.
\end{algorithmic}
\end{algorithm}

One may repeatedly test variant initial points to increase the probability of successful CPF. Let $p$ be the probability of an algorithm getting CPF with a given accuracy for fixed $A$, starting at an arbitrarily chosen point. If we take an additional test as soon as the first one fails, the probability is increased to $p+(1-p)p = 1-(1-p)^2$. Generally, the probability is $1-(1-p)^k$ within at most $k$ times of tests. However, it may also cost $k$ times of the original computational cost statistically. If it happens, the cost may be unacceptably expensive if $k$ is large. 

Fortunately, Theorem \ref{thm:fdf} implies the possibility of quickly checking whether the iteration can converge to a global optimizer or not. Clearly, quick checking in time whether the iteration is more likely to converge locally can avoid the unnecessary computational cost, and combining with a restart strategy, it can make the algorithm more competitive for solving the CPF problem. In this subsection, we give a practical approach for restarting the algorithm EPM.

Theorem \ref{thm:fdf} or Eq. (\ref{dff}) shows two extremely different behaviours of the ratio function $\frac{\|\nabla f(\tilde X)\|_F}{f(\tilde X)}$ when $\tilde X$ gets close to a stationary point of $f$. Motivated by this observation, we adopt the two criteria for charging whether the global convergence can be expectant:
\begin{itemize}
    \item Not global optimum. If the current iterative point $X$ satisfies
    \begin{align}\label{opt_local}
        \|\nabla f(X)\|_F<\epsilon_1,\quad f(X)>\|\nabla f(X)\|_F,
    \end{align}
    $X$ is at least a stationary point approximately because of the first inequality, and meanwhile, the second condition means that $X$ is far from a global minimizer.
    \item Global optimum. We obtain a global optimal solution $X$ in a high accuracy if 
    \begin{align}\label{opt_global}
        \|\nabla f(X)\|_F< \epsilon_2,\quad f(X)<\epsilon_f
    \end{align}
\end{itemize}
with $\epsilon_f\ll\epsilon_2\ll \epsilon_1$. For example, $\epsilon_f = 10^{-24}$, $\epsilon_2 = 10^{-13}$, and $\epsilon_1=10^{-3}$, as did in our experiments.

Therefore, if the condition (\ref{opt_local}) is satisfied, we think that the iteration is much likely to drop into a local minimum. In this case, we have to restart the algorithm at a new point. To avoid turning back to the same local minimum, the new starting point should be far from the current one. A simple approach is to set the opposite one $-X$ of $X$ as a new initial point.
If (\ref{opt_global}) is satisfied, the iterative point is approaching at a global minimizer. It is suggested to continue the iteration until convergence in a high accuracy or the restriction on total iteration number $k_{\rm total}$ is achieved. Details of this approach are given in Algorithm \ref{alg:REPM}.

\begin{table}[t]
\centering
\resizebox{\textwidth}{!}{ 
\begin{tabular}{|c|c|c@{\, \, }cccc@{\, \, }c@{\, \, }c|
                 c@{\, \, }cccccc|}\hline\hline
 &\diagbox{$\varepsilon$}{$s$}
 &  1\% &  4\% &  7\% & 10\% & 13\% & 16\% & 19\%
 &  1\% &  4\% &  7\% & 10\% & 13\% & 16\% & 19\%\\\hline
\multirow{4}{*}{\rotatebox{90}{No restart}}
 && \multicolumn{7}{c|}{constant}
  & \multicolumn{7}{c|}{linear}\\\cline{3-16}
 & 1e-12
   & 47 & 21 &  3 &  0 &  0 &  3 &  4
   & 97 & 83 &  7 & 26 & 42 & 46 & 46\\
 & 1e-13
   & 47 & 21 &  3 &  0 &  0 &  3 &  4
   & 95 & 82 &  7 & 26 & 42 & 46 & 46\\
 & 1e-14
   & 47 & 19 &  3 &  0 &  0 &  3 &  4
   & 95 & 80 &  7 &  2 &  5 & 10 & 14\\
   \hline
\multirow{3}{*}{\rotatebox{90}{Restart}}
 & 1e-12
   & 100 & 100 & 77 & 64 & 84 & 99 & 100
   & 100 & 100 & 96 & 100 & 100 & 100 & 100\\
 & 1e-13
   & 100 & 100 & 76 & 64 & 84 & 99 & 100
   & 100 & 99 & 84 & 99 & 100 & 100 & 100\\
 & 1e-14
   & 97 & 96 & 71 & 64 & 84 & 99 & 100
   & 98 & 93 & 56 &  6 & 11 & 17 &  17\\\hline
\multirow{4}{*}{\rotatebox{90}{No restart}}
 && \multicolumn{7}{c|}{convex}
  & \multicolumn{7}{c|}{concave}\\\cline{3-16}
 & 1e-12
   & 99 & 88 & 13 &  11 & 44 & 56 & 73
   & 97 & 71 & 12 &  9 & 35 & 43 & 50\\
 & 1e-13
   & 99 & 87 & 12 &  11 & 44 & 56 & 73
   & 97 & 69 & 10 &  9 & 35 & 43 & 50\\
 & 1e-14
   & 99 & 83 & 11 &  5 & 33 & 48 & 58
   & 96 & 68 & 10 &  1 & 1 & 8 & 2 \\\hline
\multirow{3}{*}{\rotatebox{90}{Restart}}
 & 1e-12
   & 100 & 100 & 87 & 98 & 100 & 100 & 100
   & 100 & 100 & 98 & 100 & 99 & 99 & 100\\
 & 1e-13
   & 100 & 99 & 76 & 98 & 100 & 100 & 100
   & 100 & 99 & 83 & 99 & 99  & 99  & 100\\
 & 1e-14
   & 99 & 91 & 44 &  23 &  63 & 79 &  88
   & 99 & 94 & 52 &  2 &  5 &  11 & 10\\\hline
\hline
\end{tabular}
}
\caption{Percentage of successful CP factorization with $\varepsilon$, using the restart strategy, $r = 12$, $b_{\min} = 0.1$}
\label{tab:restart}
\end{table}

To show the efficiency of REPM, we test matrices of order $n=200$ with rank (cp-rank) $r=12$, constructed in the four types ($b_{\min} = 0.1$) as before. The sparsity $s(B)$ varies from $1\%$ to $19\%$. As shown in Tables \ref{tab:sparse} and \ref{tab:rank-deficient}, the CPF is difficult for most of the constructed matrices. 50 matrices are constructed for each type and each sparsity, and totally 1400 matrices are tested. For each matrix, we run 10 times of Algorithm REPM starting at a randomly chosen orthogonal matrix. The parameters are set 
as in (\ref{parameter setting}) for $\rho$, $\sigma$, $\nu$, $\kappa$, $\lambda$, and 
\begin{align}\label{parameter setting 2}
    \epsilon_1=10^{-3}, \quad
    \epsilon_2 &= 10^{-13}, \quad
    \epsilon_f = 10^{-24}, \quad
    \epsilon_{d\!f} = 10^{-32},\\
    k_{\max}^{\rm NCG} &= 50000,\quad
    k_{\rm total}^{\rm REPM} = 500000.
\end{align}

Table \ref{tab:restart} show the percentages of CPF achieving at a given accuracy $\varepsilon$ obtained by Algorithm REPM among 500 tests for each sparse setting and each type of column norm distribution. The restart strategy can significantly increase the possibility of obtaining a CPF with a high accuracy. 
For example, the original algorithm almost always fails to get a CPF in high accuracy for constant type matrices with sparsities varying from $7\%$ to $19\%$. The factorization via the restart algorithm is very successful in most of the tests.

It should be pointed out that the number of restarts or the computation time depends on the tested matrix. Roughly speaking, the larger of the restart number is, the harder of its CPF is. However, a large number of restarts does not mean an expensive computational cost since a quick restart may occur. For example, on a constant type matrix with $s = 4\%$, REPM needs 12705 iterations within 19.2 times of restarts, costing 0.64 seconds on average. However, for the same type matrix with $s = 19\%$, REPM requires a smaller number of iterations (9845.6) within more restarts (40.2) and less time (0.26 seconds). Statistically, the computational cost also depends on the sparsity when the rank/cp-rank is fixed. 
To show this phenomenon, in Table \ref{tab:restart_NT}, we list the average values of the total iteration numbers, number of restarts, and computational time in second. In the experiment with fixed $r = 12$, the computational cost is relative expensive when $s(B)$ is $7\%$ or $10\%$, compared with other cases.

\begin{table}[t]
\centering
\resizebox{\textwidth}{!}{ 
\begin{tabular}{|c|c|c@{\, \, }cccc@{\, \, }c@{\, \, }c|
                 c@{\, \, }cccccc|}\hline\hline
Type & $s(B)$  &  1\% &  4\% &  7\% & 10\% & 13\% & 16\% & 19\%\\\hline
\multirow{3}{*}{constant}
 & \# of iter &  5608.8  & 12705.2  & 191275.9  & 260210.4  & 172455.1  & 61630.5  & 9845.6 \\
 & \# of rest & 4.6   & 19.2  & 537.7  & 1084.6  & 785.8  & 273.1  & 40.2\\
 & Time(s) & 0.16  & 0.64  & 5.92  & 6.96  & 4.56  & 1.62  & 0.26\\\hline
\multirow{3}{*}{linear}
 & \# of iter & 1116.0  & 4582.4  & 99120.8  & 26103.1  & 8391.4  & 5650.8  & 4737.0 \\
 & \# of rest & 1.0   & 1.5   & 79.6  & 31.8  & 8.9   & 3.1   & 2.2\\
 & Time(s) & 0.03  & 0.45  & 2.99  & 0.70  & 0.22  & 0.14  & 0.12 \\\hline
\multirow{3}{*}{convex}
 & \# of iter & 958.0  & 5635.3  & 167856.5  & 66521.3  & 10671.0  & 5370.8  & 3948.1\\
 & \# of rest & 1.0   & 1.2   & 101.4  & 73.7  & 11.5  & 4.4   & 2.6\\
 & Time(s) & 0.03  & 0.46  & 5.76  & 1.81  & 0.28  & 0.14  & 0.10\\\hline
\multirow{3}{*}{concave}
 & \# of iter & 1271.1  & 2923.3  & 98776.6  & 31673.3  & 20095.9  & 16224.4  & 5613.4\\
 & \# of rest &  1.1   & 1.4   & 111.9  & 65.0  & 20.5  & 17.3  & 3.3\\
 & Time(s) & 0.04  & 0.08  & 4.52  & 0.87  & 0.54  & 0.42  & 0.14\\\hline
\hline
\end{tabular}
}
\caption{REPM ($r = 12$, $b_{\min} = 0.1$): average values of total iteration number, restart number and time}
\label{tab:restart_NT}
\end{table}

\section{Comparisons}\label{sect:comparison}

In this section, we compare our exterior point method with four state-of-art algorithms: two alternative projection methods given in \cite{HSS2014} (marked as  AP-H) and \cite{GD2018} (marked as  AP-G), the coordinate descending method given in \cite{VGL2016} (marked as CD), and the alternative nonnegative least squared method given in \cite{KYP2015} (marked as ANLS).\footnote{It was reported in \cite{VGL2016} that the Newton method \cite{KDP2012} cannot beat the coordinate descending method. We omit the comparison with the Newton method.} The algorithms and their parameter setting are briefly described below.

The AP-H solves $\min_{B\geq 0,QQ^T = I}\|B-WQ\|_F$ via optimizing $\|B_{k-1}-WQ\|_F$ to get $Q_k$ and optimizing $\|B-WQ_k\|_F$ to update $B_k = (WQ_k)_+$ alternatively. 
It was suggested in \cite{HSS2014} to terminate the iteration when $|\langle B_{k-1},B_{k-1}-WQ_k\rangle|<\varepsilon$ with a given $\varepsilon$. This criterion may miss its global convergence. Since $f_k^{\rm APH} = \|B_k-WQ_k\|_F$ is monotone decreasing, we terminate the iteration if
\begin{align}\label{stop_APH}
    \frac{f_{k-1}^{\rm APH}-f_k^{\rm APH}}{f_k^{\rm APH}} < \epsilon_{r}^{\rm APH} \quad {\rm and }\quad
    \left\{\begin{array}{ll}
        f_k^{\rm APH}>\sqrt{\epsilon^{\rm APH}}, 
            & \mbox{ for local optimum, or}\\
        f_k^{\rm APH}<\epsilon^{\rm APH},
            & \mbox{ for global optimum}.
    \end{array}\right.
\end{align}
The left one can avoid meaningless iterations that do not provide acceptable decreasing on its objective function, while the right one can distinguish whether only a local minimum is achieved. The modification can increase the efficiency and is helpful for restarting this algorithm as did in REPM.

The AP-G aims to minimize $\min_{WP\geq 0,QQ^T = I} \|Q-P\|_F^2$ via alternative projection \cite{GD2018}: Set $P_{k+1} = Q_{k} - W^+(WQ_{k})_-$, an approximate solution to $\min_{WP\geq 0}\|Q_k-P\|_F^2$ given $Q_k$, and $Q_{k+1}=\arg\min_{QQ^T = I} \|Q-P_{k+1}\|_F^2$ given $P_{k+1}$. The algorithm terminates if $|\min(WQ_k)_{ij}|\leq \varepsilon$ with given $\varepsilon$, for example, $\varepsilon=10^{-15}$. We observe that the gap $\|Q_k-P_k\|_F$ matches the factorization error $\|A-(WQ_k)_+(WQ_k)_+^T\|_F$ better than $|\min(WQ_k)_{ij}|$ and that $\{\|Q_k-P_k\|_F\}$ is not monotone decreasing, A more efficient termination criterion is that 
\begin{align}\label{stop_APG}
    \epsilon_k = \min_{i\leq k}\|Q_i-P_i\|_F < \epsilon^{\rm APG}
\end{align}
with a given $\epsilon^{\rm APG}$. Let $i=i_k\leq k$ is the smallest index such that $\|Q_i-P_i\|_F = \epsilon_k$, {\it i.e.}, $\epsilon_{i_k}=\epsilon_k$. We terminate the iteration and take $Q_{i_k}$ as the output if (\ref{stop_APG}) is satisfied. If $\epsilon_{i_k}$ is not changed within $k^{\rm APG}$ iterations, we also terminate the algorithm to avoid unnecessary iterations.

Figure \ref{fig:APG-APH} illustrates the necessity of these modifications for AP-H and AP-G. In our experiments, we set $\epsilon_{r}^{\rm APH} = 10^{-7}$, $\epsilon^{\rm APH}=10^{-13}$, $\epsilon^{\rm APG} = 10^{-13}$, and $k^{\rm APG} = 5000$. We will also adopt the restart strategy for AP-H and AP-G, marked as RAP-H and RAP-G, within at most 10 number of restated implementation of the algorithm, each implementation is restricted at most 20 seconds.

\begin{figure}[t]
\centering
\includegraphics[width = \linewidth, height = 0.4\linewidth]{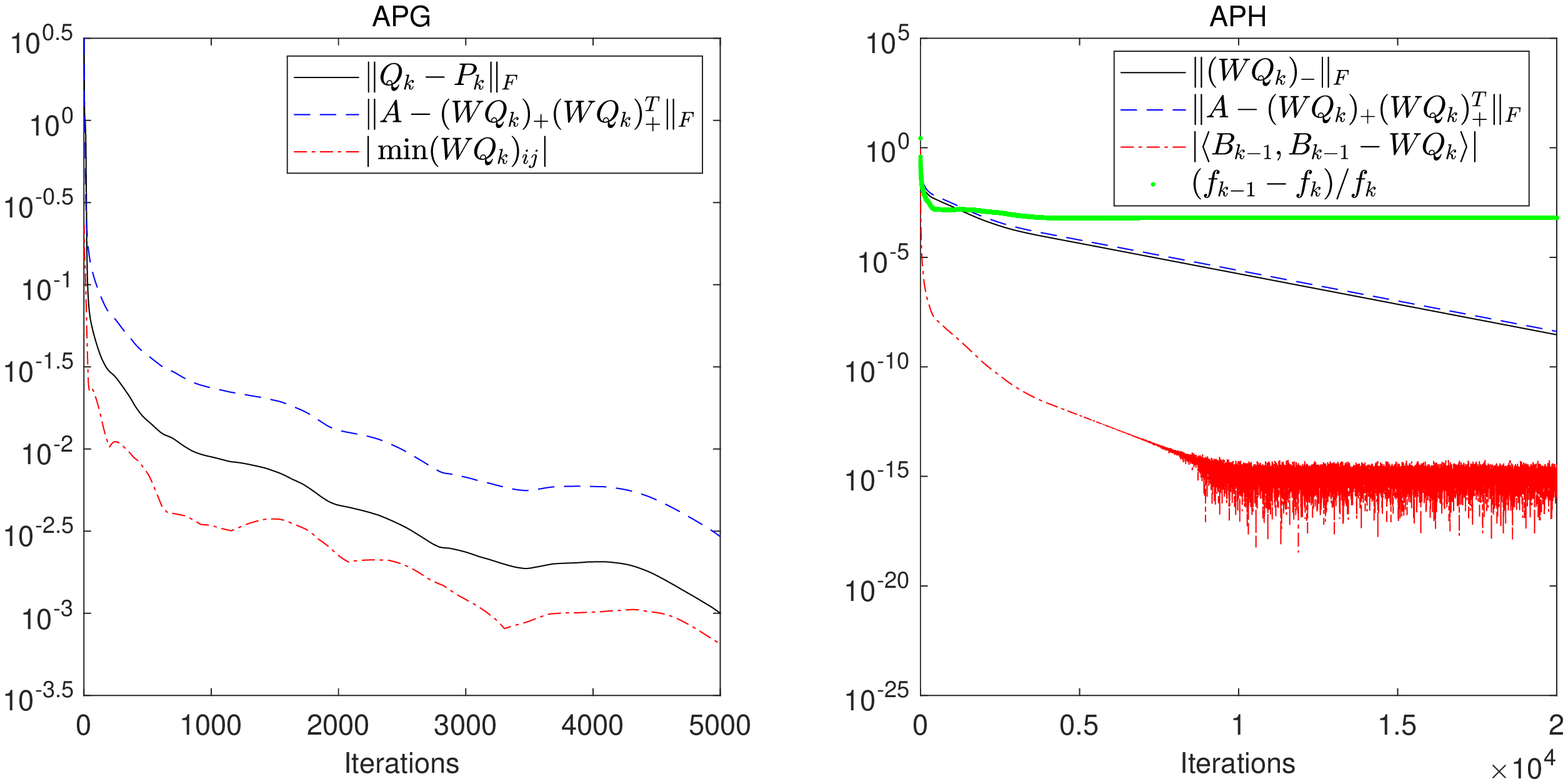}
\caption{Iterative behaviours of Algorithms AP-G and AP-H}
\label{fig:APG-APH}
\end{figure}

The CD method minimizes $\|A - BB^T\|_F^2$ via column-by-column optimization, together with component-wise optimization for column updating. ANLS minimizes 
$\|A - B_L B_R^T\|_F^2 + \alpha\|B_L-B_R\|_F^2$
with $B_L\geq 0$ and $B_R\geq 0$ alternatively, starting at a scaled random matrix $B_R$. It is more efficient than CD. However, both the two algorithms converge slowly in our experiments. Because of the weak point, the restart strategy is no longer suitable for CD or ANLS. 

\begin{figure}[t]
\centering
\includegraphics[width = \linewidth, height = 0.4\linewidth]{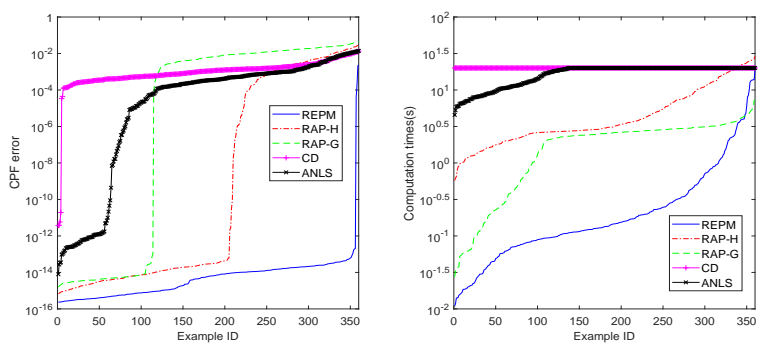}
\caption{Sorted CPF errors (left) and computational time (right) of the five algorithms on the 360 matrices}
\label{fig:comparison1}
\end{figure}

\subsection{Synthetic matrices with equal rank and cp-rank}
 
The tested matrices are randomly constructed with fixed $n = 200$, $r = 12$, and $b_{\min} = 0.1$, as did in the last section. For each type of the four column norm distributions, we choose 9 values for $s(B)$, varying from $1\%$ to $25\%$. 10 completely positive matrices are constructed for each type and each sparsity, and totally we have 360 testing matrices in this experiment. The CPF is not easy on some of these matrices, as shown in Tables \ref{tab:sparse}-\ref{tab:restart}.

The left panel of Figure \ref{fig:comparison1} plots the CPF errors of REPM, RAP-H, RAP-G, CD, and ANLS (without the restart strategy) on the 360 matrices. Each matrix is tested with random starting points.\footnote{As in REPM, we randomly construct an orthogonal matrix as the starting point for RAP-H and RAP-G since it performs better than the initial setting provided by the authors.} The REPM gives good results with relative CPF errors less than $10^{-13}$ on about 97.8\% matrices. The percentage is decreased to 56.9\% or 31.7\% for RAP-H or RAP-G, respectively. Meanwhile, the computational cost of REPM is much less than that of RAP-H and RAP-G. The percentages for CD and ANLS are very small. See the right panel of Figure \ref{fig:comparison1} for the comparisons on the computational time of these algorithms.

\begin{figure}[t]
\includegraphics[width = 1\linewidth]{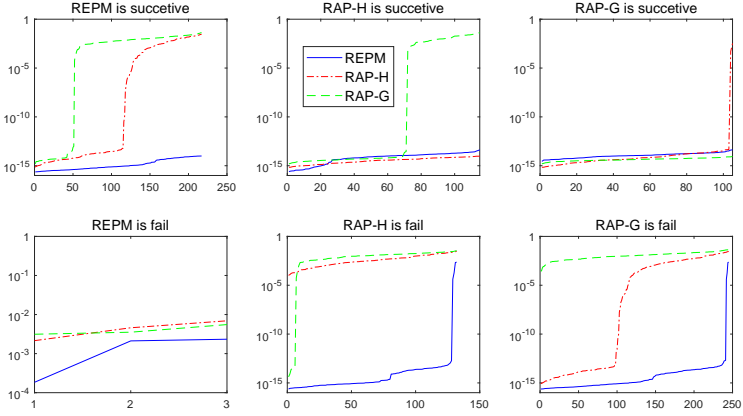}
\caption{CPF errors of the algorithms in six subsets of those tests corresponding to the CPF errors smaller than $10^{-14}$ (success set) or larger then $10^{-4}$ (fail set) of REPM, RAP-H, and RAP-G, respectively}
\label{fig:comparison2}
\end{figure}

The efficiency of an algorithm may depend on data sets. 
To make the comparison as fair as possible, for each of REPM, RAP-H, and RAP-G, we choose two sets of testing matrices that are especially suitable or unsuitable for the selected algorithm, in the sense that CPF errors are smaller than $10^{-14}$ (success set) or larger than $10^{-4}$ (fail set), respectively. Then, we compare the efficiency of other algorithms on matrices in the two special sets.
In the success sets of RAP-H or RAP-G, REPM is also successful. Meanwhile, in the success set of REPM, RAP-H and RAP-G fail on about 39.1\% and 76.0\% matrices, respectively. Conversely, in the sets of RAP-H and RAP-G fail, REPM has success rates 64.4\% and 67.4\% yet, respectively. The success rates for REPM can be increased to 97.0\% and 98.4\% if the accuracy is slightly decreased to $10^{-12}$. Figure \ref{fig:comparison2} shows the distributions of relative CPF errors for these three algorithms in each of the 6 sets.

As shown in Figure \ref{fig:comparison1}, CD fails to give a CPF with an acceptable accuracy -- the CPF errors are always larger than $10^{-4}$. The factorization error cannot be decreased when we restart CD. ANLS is slightly benefited from the restart strategy but the running time is significantly increased. For example, if we restart ANLS at most 10 times, each is restricted to run at most 20 seconds, the percentage of CPF with $\mbox{Error}(\tilde A)<10^{-10}$ is increased from 13.3\% to 17.5\% slightly, while the average computational time is unacceptably increased from 17.6 seconds to 171 seconds.

\subsection{Large-scale completely positive matrices}

In this comparison, we show how the efficiency of REPM, RAP-H, and RAP-G on matrices in large scale. We randomly construct 80 matrices in the scale $n = 20000$ and $r = 20$ with 4 different sparsity values $s(B) = 0$, 10\%, 20\%, and $30\%$ with each of the four distributions of column norms of $B$ as we did before.

The left panel of Figure \ref{fig:comparison3} plots the CPF errors of REPM, RAP-H and RAP-G. We terminate these algorithms when the limit on computational time $t_{\max} = 100$ seconds is touched. All the CPF errors of REPM are smaller than $10^{-12}$ on the 80 matrices. For RAP-H, there are only 37.5\% of CPF errors are smaller than $10^{-12}$. RAP-G performs poorly -- its CPF errors are larger than $10^{-8}$ on all the matrices. Meanwhile, the computational cost of REPM is much less than that of RAP-H and RAP-G. In the right of Figure \ref{fig:comparison3}, we also compare the computational time in seconds for these three algorithms on the 80 matrices. REPM is much faster than RAP-H and RAP-G.

\begin{figure}[t]
{\center
\includegraphics[width = 1\linewidth, height = 0.35\linewidth]{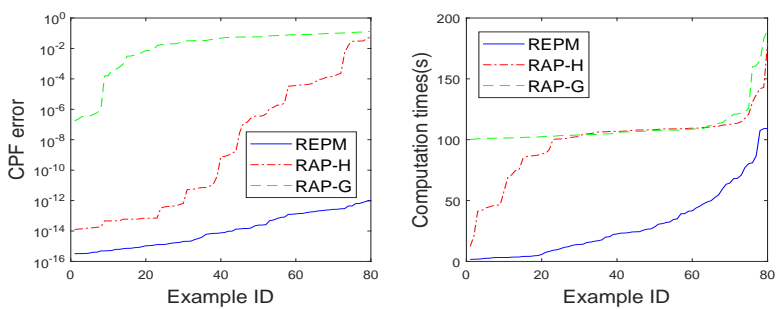}}
\caption{CPF errors (left) and computation time (right) of REPM, RAP-H, and RAP-G on 80 matrices}
\label{fig:comparison3}
\end{figure}

\subsection{Special matrices with cp-rank larger than rank}

It is not easy to construct a completely positive matrix with cp-rank larger than rank, except diagonally dominant symmetric nonnegative matrices \cite{K1987}. 
%There are few of such ones reported in the literature. 
Here are the four special completely positive matrices given in the four papers \cite{BS2003,BB2003,GLL2017,BSU2015}, respectively, each has a cp-rank larger than its rank:
\begin{align*}
    A_1 = \left[\begin{matrix}
            6 & 3 & 3 & 0\\
            3 & 5 & 1 & 3\\
            3 & 1 & 5 & 3\\
            0 & 3 & 3 & 6\\
        \end{matrix}\right], \quad
    A_2 = WDW^T, \quad
    A_3 =  \left[\begin{matrix}
            I_k            & \frac{1}{k}J_k\\
            \frac{1}{k}J_k & I_k
        \end{matrix}\right], \quad    
    A_4 =  \left[\begin{matrix}
            G & H & H \\
            H & G & H \\
            H & H & G \\
        \end{matrix}\right],
\end{align*}
where both $I_k$ and $J_k$ are of order $k$, $I_k$ is identity, and $J_k$ has all entries equal to 1, 
\[
   W = \left[\begin{array}{rrr}
             1 &     0     & 1 \\
             a &  \sqrt{b} & 1 \\
            -b &  \sqrt{a} & 1 \\
            -b & -\sqrt{a} & 1 \\
             a & -\sqrt{b} & 1 \\
        \end{array}\right], \quad
   G =  \left[\begin{matrix}
              91 &  0  & 0  &  0\\
              0  &  42 & 0  &  0\\
              0  &  0  & 42 &  0\\
              0  &  0  & 0  &  42
        \end{matrix}\right], \quad
    H =  \left[\begin{matrix}
              19  &  24  & 24  &  24\\
              24  &  6   & 6   &  6\\
              24  &  6   & 6   &  6\\
              24  &  6   & 6   &  6
        \end{matrix}\right],
\]
$a = \frac{\sqrt{5}-1}{4}$, $b = \frac{\sqrt{5}+1}{4}$, and $D = \diag(2,\sqrt{5},\frac{3+\sqrt{5}}{2})$, The pairs $(r(A),r_{cp}(A))$ of these matrices are $(3,4)$, $(3,5)$, $(2k-1,k^2)$, and $(10,37)$, respectively. 
$A_2$ is a small example whose cp-rank can achieve the upper bound in the estimation $r_{cp}\leq \frac{r(r+1)}{2}-1$ and it has an explicit CPF \cite{BB2003}.  $A_3$ is an example whose cp-rank can be significantly larger than the matrix order \cite{GLL2017} since $r_{cp} = \frac{n^2}{4}$. It is diagonally dominant, and hence, has an explicit CPF by the factorization 
\begin{align}\label{factor dominant}
    A = \sum\limits_{1\leq i < j \leq n}a_{ij}(e_i+e_j)(e_i+e_j)^T + \diag(c_1,...,c_n),
\end{align}
given \cite{K1987} for any symmetric matrix $A=(a_{ij})$, 
where $c_i = a_{ii}-\sum_{j\neq i}a_{ij}$ and $e_i$ is the $i$-th column of the identity matrix of order $n$. 
Hence, if $A$ is nonnegative and diagonally dominant, {\it i.e.}, all $c_i\geq 0$, it must be completely positive since $A = BB^T$ with a nonnegative matrix $B$ of at most $r_+$ nonzero columns, where $r_+$ is the number of nonzero entries of $\{a_{ij}: i<j\}$ and $\{c_i\}$. For $A_3$, $r_+=k^2 = r_{cp}$.
$A_4$ is an example given in \cite{BSU2015} to show that there is a completely positive matrix whose cp-rank larger than $\frac{n^2}{4}$ without an CPF. In Appendix C, we give an explicit form of its CPF. Our exterior point method can give a strict CPF for each of these matrices almost exactly.

\begin{table}[t]
\centering
\resizebox{\textwidth}{!}{ 
\begin{tabular}{|c|c|cc|cc|cc|}\hline\hline
\multirow{2}{*}{Error($\tilde A$)} 
 & \multirow{2}{*}{\!\!Algorithm\!\!} 
 & \multicolumn{2}{c|}{$A_1$}  & \multicolumn{2}{c|}{$A_2$} 
 & \multicolumn{2}{c|}{$A_3\, (k=5)$}\\\cline{3-8}
 && T(s) & \# of iter.\!\! & T(s) & \# of iter.\!\! & T(s) & \# of iter.\!\!\\\hline
\multirow{3}{*}{\!\!$<\!10^{-14}$\!\!}
 & REPM & 0.002 &   128 & 0.004 &  283 & 0.104 &  6121\\
 & RAP-H & 0.091 & 16543 & 0.012 & 1493 & 1.510 & 89956\\
 & RAP-G & 0.07 & 11226 & 0.119 & 16007 & 2.512 & 140245\\\hline
\multirow{2}{*}{$>10^{-7}$}
 & CD   & 1.913  & 500001 & 6.077 & 1000001 & 20.000 & 353369\\
 & ANLS & 9.629 & 100000 & 9.878 & 100000  & 24.582 & 121204\\\hline
\hline
\end{tabular}}
\caption{Average values of the computational costs for the five algorithms on the special matrices}
\label{tab:special examples}
\end{table}

The REPM, RAP-H, and RAP-G perform very well on $A_1$, $A_2$, and $A_3$ with small $k$.\footnote{We use a randomly chosen row-orthonormal matrix as a restart point in REPM, rather than $-X$, and $\epsilon_1=10^{-7}$, keeping others unchanged.} Each of the three algorithms can quickly obtain a good CPF with an error smaller than $10^{-14}$. However, both the CD and ANLS fail to yield an acceptable CPF in the accuracy $10^{-7}$. 
We restrict the running time at most 20 seconds for CD and ANLS.\footnote{The REPM, RAP-H, and RAP-G do not touch the restriction on time.}
Because each iteration of the CD is faster than that of ANLS, we also use an additional limit $n_{\rm iter}$ for the iteration number in CD and ANLS: $n_{\rm iter}^{\rm CD} = \frac{1}{2}10^6$ for $A_1$ and $n_{\rm iter}^{\rm CD}= 10^6$ for the other three matrices, but $n_{\rm iter}^{\rm ANLS} = 10^5$. As mentioned before, the restart strategy is not suitable for CD since it seldom terminates before the two restrictions are not touched. In this experiment, we also restart the ANLS when the total iteration number is less than $n_{\rm iter}^{\rm ANLS}$.\footnote{The the total iteration number of restart ANLS could be slightly larger than $n_{\rm iter}^{\rm ANLS}$ but less $2n_{\rm iter}^{\rm ANLS}$.} 

In Table \ref{tab:special examples}, we list the average values and total iteration numbers, among 100 repeats on each matrix. The REPM converges much faster than RAP-H and RAPG. Besides the advantage point on computational time, the REPM is also more robust to the initial point. We also test the EPM, AP-H, and AP-G without the restart strategy on $A_1$, $A_2$ and $A_3$ with 100 tests for each algorithm. In these tests, EPM has a higher success rate, see Figure \ref{fig:comparison4}.

\begin{figure}[t]
\hspace{-20pt}\includegraphics[width = 1.1\linewidth, height = 0.3\linewidth]{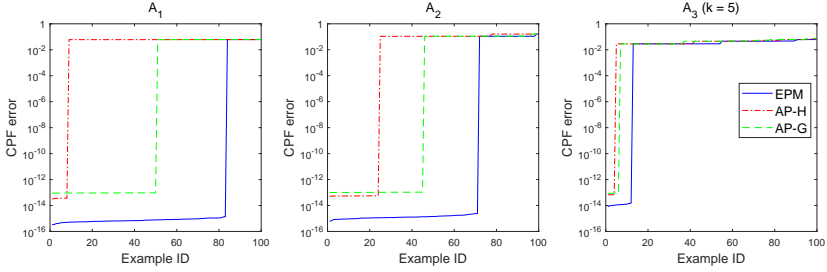}
\caption{Sorted CPF errors (left) of the three algorithms without the restart strategy on $A_1$, $A_2$ and $A_3$ ($k = 5$)}
\label{fig:comparison4}
\end{figure}

The cp-rank of $A_3$ is much larger than its rank or its matrix order if $k$ is slightly large, and so is it for $A_4$. The CPF is difficult in this case. 
We use a relative weak criterion for RAP-H and RAP-G: the algorithms are terminated when both the total running time $t_{\max}$ seconds and total iteration number $k_{\rm total}$ are touched, but a strict criterion for the REPM: it is terminated when the half of the time $t_{\max}$ is touched. We use different values of $t_{\max}$ and $k_{\rm total}$ for $A_3$ and $A_4$ as follows: For $A_3$ with $k=6,\, 8$, and 10, $t_{\max} = 40, 120$, and 240 seconds, $k_{\rm total} = 3\!\times\!10^{5},\, 5\!\times\!10^{5}$, and $8\!\times\!10^{5}$, respectively, and for $A_4$, $t_{\max} = 40$ seconds and $k_{\rm total} = 3\!\times\!10^{5}$.

Table \ref{tab:A4} shows the performance of REPM, RAP-H, and RAP-G on $A_3$ and $A_4$ on the average value of computational time and total iteration number among 100 times for each matrix. These three algorithms perform very well on $A_k$ with small $k=6$. As $k$ slightly increases to 8 or 10, the percentage of successful CPF of RAP-H or RAP-G decreases and the computational time increases quickly. RAP-H is a bit better than RAP-G on $A_4$, but its success rate is only 7\%. The REPM performs much better than RAP-H and RAP-G. On $A_3$ with $k=8$ and $A_4$, it always gives a CPF with error smaller than $10^{-12}$. On the difficult $A_3$ with $k=10$, it can also provide 47\% CPF within this accuracy.

\begin{table}[t] 
\centering
\resizebox{\textwidth}{!}{ 
\begin{tabular}{|c|c|c|rrr|rrr|rrr|}\hline\hline
\multirow{2}{*}{Error($\tilde A$)} & \multicolumn{2}{c|}{\multirow{2}{*}{Matrix}}
 & \multicolumn{3}{c|}{REPM} 
 & \multicolumn{3}{c|}{RAP-H} 
 & \multicolumn{3}{c|}{RAP-G}\\ \cline{4-12}
 &\multicolumn{2}{c|}{ }
 & Rate & Time & \# of iter
 & Rate & Time & \# of iter
 & Rate & Time & \# of iter.\\\hline
\multirow{4}{*}{$<10^{-12}$} & \multirow{2}{*}{$A_3$,} 
  &  6 & 100\% &  0.4s &  17764 &  100\% & 3.9s & 157958 & 100\% & 7.9 & 36718\\
 & \multirow{2}{*}{$k=$} 
 &  8 & 100\% & 6.7s & 209392 &  70\% & 53.9s & 1287981 & 51\% & 54.7 & 148167\\
 && 10 &   47\% & 54.5s & 772537 &  3\% & 104.5s & 1476917 & 5\% & 147.7 & 2315969\\ \cline{2-3}
 & \multicolumn{2}{c|}{$A_4$} & 100\% &  1.8s &  100032 &  7\% & 15.35s & 672146 & 0\% & -- & --\\
 \hline
\multirow{4}{*}{$>10^{-4}$} & \multirow{2}{*}{$A_3$,}
  &  6 &  0\% &  --   &    --  &  0\% &  -- &  -- & 0\% & -- & --\\
 & \multirow{2}{*}{$k=$}
  &  8 & 0\% & -- & -- & 30\% & 120.1s & 2869192 & 49\% & 120.3 &3252231 \\
 && 10 & 53\% & 120.1s & 1668324 & 97\% & 240.3s & 3365013 & 95\%& 240.7& 3732500\\ \cline{2-3}
 & \multicolumn{2}{c|}{$A_4$} & 0\% &  -- &  -- &  93\% & 40.1s & 1740837 & 100\% & 40.1s & 1590793\\
 \hline\hline
\end{tabular}
}
\caption{Performance of REPM, RAP-H, and RAP-G on the matrix $A_3$ with $k=6,8,10$ and $A_4$}
\label{tab:A4}
\end{table}

\section{Conclusions}

In this paper, we tried the idea of exterior point method for addressing the CPF problem numerically. The proposed optimization model and its iterative solver via a modified NCG can implement the exterior point method. In the numerical experiments reported in this paper, the exterior point method performs much better than the algorithms in the literature. We discussed some potential issues that may affect the CPF via a lot of numerical experiments by our algorithm. Some phenomena are interesting and might be helpful for further analysis on this topic.
However, we just touched a small angle of the ice mountain. For instance, 
The special rank-sparsity boundary mentioned in Phenomenon B may determine how hard the CPF is, but we have no idea to verify its existence or characterize such a boundary theoretically.
Besides this, the approximate cp-rank deficiency may also result in a difficult CPF. For our algorithm or its restart version on difficult CPFs, it deserves to explore an efficient initial setting. The weak CPF problem is relatively easier than the (strict) CPF. It may also be an interesting topic to transform a weak CPF to a strict CPF efficiently. It is worth further working on these topics.

\begin{acknowledgements}
The work was supported in part by NSFC project 11971430 and Major Scientific Research Project of Zhejiang Lab (No. 2019KB0AB01).
\end{acknowledgements}

\appendix
\section{Transversal intersection of $\mathbb Q$ and $\mathbb{P}$}

\begin{proposition}\label{prop:QP}
The Stiefel manifold $\mathbb{Q}_r = \{Q \in\mathbb{R}^{r\times r_+}: QQ^T = I_r\}$ transversally intersects the submanifold $\mathbb{P}_r = \{P\in \mathbb{R}^{r\times r_+}: WP\geq 0\}$ at $Q\in \mathbb{Q}_r\cap \mathbb{P}_r$ if and only if for any nonnegative matrix $Y\in {\mathbb{R}}^{n\times r}$, $W^TYQ^T$ is not symmetric or its trace is not zero when it is not zero. 
\end{proposition}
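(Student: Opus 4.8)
The plan is to unwrap \emph{transversal intersection} into a statement about normal cones, compute those two cones explicitly, and then translate the triviality of their (signed) intersection into the stated algebraic condition on $W^TYQ^T$.

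First I would recall the definition used in \cite{D2013}: $\mathbb{Q}_r$ and $\mathbb{P}_r$ intersect transversally at $Q$ precisely when $N_Q\mathbb{Q}_r\cap\big({-}N_Q\mathbb{P}_r\big)=\{0\}$, $N_Q$ being the (limiting) normal cone. Since $\mathbb{Q}_r$ is the zero set of $Q\mapsto QQ^T-I_r$, whose differential $Z\mapsto ZQ^T+QZ^T$ maps onto the symmetric matrices at every $Q\in\mathbb{Q}_r$ (because $Q$ has full row rank there), $\mathbb{Q}_r$ is a smooth embedded submanifold and
\[
    N_Q\mathbb{Q}_r=\big(T_Q\mathbb{Q}_r\big)^{\perp}=\{SQ:\ S=S^T\in\mathbb{R}^{r\times r}\},
\]
which one verifies from $\langle SQ,Z\rangle=\tfrac12\langle S,ZQ^T+QZ^T\rangle=0$ for tangent $Z$ together with a dimension count. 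For $\mathbb{P}_r$, rewriting each defining inequality as $\langle W^Te_ie_j^T,P\rangle=(WP)_{ij}\ge 0$ exhibits $\mathbb{P}_r$ as a polyhedral cone, so its normal cone at $Q$ is the conic hull of the negated gradients of the active constraints:
\[
    N_Q\mathbb{P}_r=\big\{-W^TM:\ M\ge 0,\ M_{ij}=0\ \text{whenever}\ (WQ)_{ij}>0\big\}=\big\{-W^TM:\ M\ge 0,\ M\odot WQ=0\big\},
\]
where $WQ\ge 0$ because $Q\in\mathbb{P}_r$.

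Next I would match the two descriptions. For the forward implication I argue the contrapositive: if $0\ne V\in N_Q\mathbb{Q}_r\cap(-N_Q\mathbb{P}_r)$, write $V=SQ=W^TM$ with $S=S^T$, $M\ge 0$, $M\odot WQ=0$. Right-multiplying $V=SQ$ by $Q^T$ gives $S=W^TMQ^T$, which is symmetric; moreover $\trace(S)=\langle SQ,Q\rangle=\langle W^TM,Q\rangle=\langle M,WQ\rangle=0$, and $S\ne 0$ since $V=SQ\ne 0$ while $Q$ has full row rank. Hence $Y:=M\ge 0$ makes $W^TYQ^T=S$ a nonzero symmetric matrix of zero trace, contradicting the condition. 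For the converse, let $Y\ge 0$ be such that $W^TYQ^T$ is symmetric, nonzero, and of zero trace. Then $\trace(W^TYQ^T)=\langle Y,WQ\rangle=0$ with $Y\ge 0$ and $WQ\ge 0$ forces $Y\odot WQ=0$, so $Y$ is an admissible multiplier and $V:=W^TY\in -N_Q\mathbb{P}_r$; also $V\ne 0$ because $VQ^T=W^TYQ^T\ne 0$, and $V=(W^TYQ^T)Q$ is of the form $SQ$ with $S$ symmetric, hence $V\in N_Q\mathbb{Q}_r$ — a nonzero common normal, so transversality fails.

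The step I expect to be the obstacle is the identity $V=(W^TYQ^T)Q$, i.e. $W^TY=W^TYQ^TQ$, used in the converse: it is immediate when $Q$ is square orthogonal ($Q^TQ=I_r$, the setting of the motivating discussion with $B=WQ$), but in the genuinely row-orthonormal case $r<r_+$ it is equivalent to the rows of $W^TY$ lying in the row space of $Q$, which is not automatic and would require either a separate argument or a sharper construction of the common normal directly from $Y$. The auxiliary points — the constraint qualification that legitimizes the polyhedral normal-cone formula (e.g. nonemptiness of $\mathbb{P}_r$), and the dimension count behind $N_Q\mathbb{Q}_r$ — are routine.
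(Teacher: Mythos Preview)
Your approach is essentially the paper's: compute the two normal cones and reduce transversality to the algebraic condition. The only methodological difference is in the computation of $N_Q\mathbb{P}_r$. You invoke the standard polyhedral formula (conic hull of the active constraint gradients), while the paper verifies the inclusion $\{-W^TY:\,Y\ge 0,\ \langle Y,WQ\rangle=0\}\subseteq N_Q\mathbb{P}_r$ directly from the definition and then establishes the reverse inclusion by a hyperplane-separation argument (showing that every $N\in N_Q\mathbb{P}_r$, written as $-W^TY$ via surjectivity of $W^T$, can in fact be realized with a \emph{nonnegative} $Y$). Both routes land on the same description, and since $Y\ge 0$, $WQ\ge 0$, your condition $M\odot WQ=0$ is equivalent to the paper's $\langle Y,WQ\rangle=0$.

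The obstacle you flagged in the converse --- that $W^TY=(W^TYQ^T)Q$ needs $W^TY(I-Q^TQ)=0$, which is automatic only when $r_+=r$ --- is genuine, and the paper does \emph{not} resolve it either: its final sentence simply asserts the equivalence, implicitly passing from $S=W^TYQ^T$ to $SQ=W^TY$ without justification. Note also that the statement itself has $Y\in\mathbb{R}^{n\times r}$ while the proof works with $Y\in\mathbb{R}^{n\times r_+}$; these coincide exactly when $r_+=r$, which is the setting of the motivating $4\times 4$ example in the introduction and of the alternating-projection convergence question the proposition is meant to address. In the square case your argument (and the paper's) is complete as written; in the strictly rectangular case both share the same unfilled step.
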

\begin{proof}
It is known that the normal spaces of $\mathbb{Q}_r$ at $Q$ is ${\cal N}_{\mathbb{Q}_r}(Q) 
    = \big\{SQ:\ S\in \mathbb{R}^{r\times r}, S=S^T\big\}$. 
If we also have $Q\in\mathbb P_r$, the normal cone of $\mathbb{P}_r$ at $Q$ is defined as
$
    {\cal N}_{\mathbb{P}_r}(Q) 
    = \big\{N:\ \langle N, P-Q\rangle\leq 0,\ P\in \mathbb{P}_r\big\}.
$ 
Since $W$ is of full column rank, we can rewrite any $N\in{\mathbb R}^{r\times r_+}$ as $N  = -W^TY$ with a $Y\in{\mathbb R}^{n\times r_+}$.\footnote{$Y$ is not unique in the representation $W^TY$.} The restriction for ${\cal N}_{\mathbb{P}_r}(Q)$ becomes $\langle Y, WP-WQ\rangle\geq 0$ for $P\in \mathbb{P}$.
Since $Q\in\mathbb{P}$, choosing $P = 2Q$ and $P = 2^{-1}Q$ in the restriction, it is equivalent to $\langle Y, WQ\rangle = 0$ and $\langle Y, WP\rangle \geq 0$ for $P\in \mathbb{P}$. Furthermore, we can restrict $Y$ to be nonnegative, which makes the inequality $\langle Y, WP\rangle \geq 0$ hold automatically for all $P\in\mathbb{P}$, and hence, we can represent 
\[
    {\cal N}_{\mathbb{P}_r}(Q) 
 	= \big\{-W^TY:\ Y\in \mathbb{R}^{m\times r_+},\ Y\geq 0,\ \langle Y,WQ\rangle = 0\big\}.
\]

To show the existence for a fixed $W^TY$, we assume $W^TY\neq W^T\tilde Y$ for any $\tilde Y\geq 0$. Hence, $W^TY$ and the closed cone $\{W^T\tilde Y: \tilde Y\geq 0\}$ are separated. That is, there is an $H\in{\mathbb R}^{r\times r_+}$ such that $\langle Y, WH\rangle < \langle \tilde Y, WH\rangle$ for all $\tilde Y\geq 0$.
Let $\tilde Y_t\geq 0$ with zero entries except $(\tilde Y_t)_{ij}=t>0$ for arbitrary index pair $(i,j)$. We get that as $t\to 0$, $(WH)_{ij} = \frac{1}{t}\langle \tilde Y_t, WH\rangle>\frac{1}{t}\langle Y, WH\rangle\to 0$, which implies $(WH)_{ij}\geq 0$. That is, $H\in\mathbb{P}$. A contradiction follows immediately as that $0\leq \langle Y, WH\rangle < \langle \tilde Y, WH\rangle=0$ if we set $\tilde Y=0$. 

Hence, $\mathbb{Q}_r$ intersects $\mathbb{P}_r$ transversally at $Q\in \mathbb{Q}_r\cap \mathbb{P}_r$, that is by definition,
${\cal N}_{\mathbb{Q}_r}(Q)$ and $-{\cal N}_{\mathbb{P}_r}(Q)$ are intersected at the origin only, is equivalent to that if $SQ=W^TY$ for a symmetric $S\in \mathbb{R}^{r\times r}$ and a nonnegative $Y\in \mathbb{R}^{n\times r_+}$ satisfying $\langle WQ,Y\rangle = 0$, we must have $S = W^TYQ^T=0$. It is also equivalent to that for any $Y\geq 0$ of order $r\times r_+$, there is not a symmetric $S$ equal to $W^TYQ^T$, {\it i.e.}, $W^TYQ^T$ is not symmetric, or $Y\notin -{\cal N}_{\mathbb{P}_r}(Q)$, {\it i.e.}, the trace of $W^TYQ^T$ is not zero when it is not zero.
$\hfill\square$
\end{proof}

\section{Proof of Theorem \ref{thm:second order}}

\begin{proof}
For a stationary point $X$ of $f$, let $T_0$ and $T_-$ be the indicator matrices of the zero entries and negative entries of $WX$, respectively. Consider a sufficiently small neighborhood $N(X)$ of $X$, in which $(W\tilde X)_{ij}<0$ if $(WX)_{ij}<0$, and $(W\tilde X)_{ij}>0$ if $(WX)_{ij}>0$. For $\tilde X = X+\Delta\in N(X )$, since $(WX )\odot T_0=0$, we get that 
\[
    (W\tilde X)_- = (W\tilde X)\odot T_-+(W\Delta)_-\odot T_0, 
\]
and $\|(W\tilde X)_-\|_F^2 = \|(W\tilde X)\odot T_-\|_F^2+ \|(W\Delta)_-\odot T_0\|_F^2$. Let 
\[
    f_-(\tilde X) =  \frac{1}{4}\|\tilde X\tilde X^T-I\|_F^2 +\frac{\lambda}{2}\|(W\tilde X)\odot T_-\|_F^2.
\]
It gives $\nabla f_-(\tilde X) = (\tilde X\tilde X^T-I)\tilde X+\lambda W^T\big((W\tilde X)\odot T_-\big)$. Hence,
\begin{align}\label{ff_T}
    f(\tilde X) &= f_-(\tilde X) + \|(W\Delta)_-\odot T_0\|_F^2,\quad
    \nabla f(\tilde X)  = \nabla f_-(\tilde X) +\lambda W^T\big((W\Delta)_-\odot T_0\big).
\end{align}
Obviously, $f_-(X )=f(X )$ and $\nabla f_-(X )=\nabla f(X )=0$. 
It is easy to verify that
\begin{align*}
    f_-(\tilde X) - f_-(X ) 
    & = \frac{\lambda}{2}\|(W\Delta)\odot T_-\|_F^2
    + \frac{1}{2}\big\langle X X ^T-I, \Delta\Delta^T\big\rangle
    + \frac{1}{4}\|\Delta X ^T+X \Delta^T+\Delta\Delta^T\|_F^2\\
    & = \langle{\cal A}(\Delta),\Delta\rangle
      + \frac{1}{2}\big\langle \Delta X ^T+X \Delta^T,\Delta\Delta^T\big\rangle
      + \frac{1}{4}\|\Delta\Delta^T\|_F^2,
\end{align*}
where $\langle{\cal A}(\Delta),\Delta\rangle$ is a quadratic form with the linear mapping ${\cal A}: {\mathbb R}^{r\times r_+}\to {\mathbb R}^{r\times r_+}$, 
\[
    {\cal A}(\Delta) = (X X ^T-I)\Delta + \Delta X ^TX +X \Delta^TX 
    +\lambda W^T\big((W\Delta)\odot T_-\big).
\]
It is not difficult to rewrite $\langle{\cal A}(\Delta),\Delta\rangle$ as a quadratic form $\delta^T\!S\delta$ with a symmetric matrix $S$ and the the vector representation $\delta = v(\Delta)$ of the matrix $\Delta$. If $S$ is positive definite, the quadratic form must be also positive definite, and hence, $X $ is a local minimizer of $f_-$. If $S$ is positive semidifinite, let $\{\delta^{(k)} = v(\Delta^{(k)})\}$ be the $rr_+$ unit eigenvectors of $S$ corresponding to eigenvalues $\{\lambda_k\}$ in ascending order and the first $m$ ones are zeros. By the assumption, $\Delta^{(k)} X ^T+X (\Delta^{(k)})^T = 0$ for $k\leq m$. Then, for $\tilde X = X +\Delta$ with a nonzero $\Delta = \sum_k\alpha_k\Delta^{(k)}$, 
%and $\|\Delta\|_F^2 = \sum_k\alpha_k^2 = \|\alpha\|^2$, 
\begin{align*}
    f_-(\tilde X) - f_-(X ) 
    &=\langle{\cal A}(\Delta),\Delta\rangle
    + \frac{1}{2}\big\langle \Delta X ^T+X \Delta^T,\Delta\Delta^T\big\rangle
    + \frac{1}{4}\|\Delta\Delta^T\|_F^2\\
    &= \sum_{k>m}\Big\{\alpha_k^2\lambda_k
    + \alpha_k\big\langle \Delta^{(k)} X ^T,\Delta\Delta^T\big\rangle\Big\}
    + \frac{1}{4}\|\Delta\Delta^T\|_F^2.
\end{align*}
Since 
$\big|\sum_{k>m}\alpha_k\big\langle \Delta^{(k)} X ^T,\Delta\Delta^T\big\rangle\big|
= \big|\big\langle\sum_{k>m}\alpha_k\Delta^{(k)},\Delta\Delta^TX \big\rangle\big|
\leq \|\sum_{k>m}\alpha_k\Delta^{(k)}\|_F\|\Delta\Delta^T\|_F\|X \|_2$ and by the orthogonality of $\{\delta^{(k)}\}$,
$\|\sum_{k>m}\alpha_k\Delta^{(k)}\|_F=\sqrt{\sum_{k>m}\alpha_k^2}$, we get
\[
    \big|\sum_{k>m}\alpha_k\big\langle \Delta^{(k)} X ^T,\Delta\Delta^T\big\rangle\big|
    \leq \sqrt{\sum_{k>m}\alpha_k^2}\|X \|_2\|\Delta\Delta^T\|_F
    \leq \sum_{k>m}\alpha_k^2\|X \|_2^2+ \frac{1}{4}\|\Delta\Delta^T\|_F^2.
\]
Therefore, $f_-(\tilde X) - f_-(X ) \geq \sum_{k>m}\alpha_k^2(\lambda_k-\|X \|_2^2)\geq 0$. Hence, $f(\tilde X)\geq f_-(\tilde X)\geq f_-(X ) = f(X )$. That is, $X $ must be also a local minimizer of $f$.
$\hfill\square$
\end{proof}

\section{The strict CPF of $A_4$}

Let $ e_i$ be the $i$-th column of the identity matrix $I_4$ of order 4, and let
\[
    B_1 = \left[\begin{array}{c} e_4\\ e_4\\ e_4\end{array}\right],\ 
    B_2 = \sqrt{5}\left[\begin{array}{ccc} e_4 & e_1 & e_4\\ e_1 & e_4 & e_4\\ e_4 & e_4 & e_1\end{array}\right],\ 
    B_3 = \sqrt{7}\left[\begin{array}{ccc} e_1 & e_4 & e_1\\ e_4 & e_1 & e_1\\ e_1 & e_1 & e_4\end{array}\right],\ 
    B_4 = \sqrt{6}\left[\begin{array}{c} B_{14}\\ B_{24}\\ B_{34}\end{array}\right],
\]
where $B_{14} = [e_1{\bf 1}_{12}^T, e_2{\bf 1}_7^T, e_3{\bf 1}_7^T, e_4{\bf 1}_4^T]$, where ${\bf 1}_k$ is a $k$-dimensional column vector of all ones, $B_{24} = B_{14}P_2^T$, and $B_{34} = B_{14}P_3^T$ with two permutation matrices $P_2$ and $P_3$ such that 
\[
    B_{i4}B_{i4}^T = \diag(12,7,7,4),\ i=1,2,3,\quad 
    B_{i4}B_{j4}^T = \left[\begin{array}{cccc}
     2 & 4 & 4 & 2\\
     4 & 1 & 1 & 1\\
     4 & 1 & 1 & 1\\
     2 & 1 & 1 & 0\\ \end{array}\right],\ i\neq j.
\]
One can verify that $A_4 = BB^T$ with $B=[B_1,B_2,B_3, B_4]$, a strict CPF of $A_4$.

\clearpage


\begin{thebibliography}{}

\bibitem{A1985} Al-Baali, M.: Descent property and global convergence of the Fletcher—Reeves method with inexact line search. IMA J. Numer. Anal. 5(1): 121-124 (1985)

\bibitem{B2009} Burer, S.: On the copositive representation of binary and continuous nonconvex quadratic programs. Math. Program. 120(2): 479-495 (2009)


\bibitem{B2012} Bomze, I.: Copositive optimization–recent developments and applications. Eur. J. Oper. Res. 216(3): 509-520 (2012)

\bibitem{BB2003} Barioli, F., Berman, A.: The maximal cp-rank of rank k completely positive matrices. Linear Algebra Appl. 363: 17-33  (2003)

\bibitem{BD2009} Bundfuss, S. , Dür, M.: An Adaptive Linear Approximation Algorithm for Copositive Programs. SIAM J. Optim. 20(1):30-53  (2009)

\bibitem{BDS2015} Berman, A., Dür, M., Shaked-Monderer, N.: Open problems in the theory of completely positive and copositive matrices. Electron. J. Linear Al. 29(1): 46-58 (2015)

\bibitem{BG1988} Berman, A., Grone, R.: Bipartite completely positive matrices. Math. Proc. Cambridge  103(2): 269-276 (1988)

\bibitem{BH1979} Bellman, R., Hall, M.: Combinatorial Analysis. Amer Mathematical Society (1979)

\bibitem{BJR2011} Bomze , I., Jarre, F., Rendl, F.: Quadratic factorization heuristics for copositive programming. Mathematical Programming Computation. 3(1): 37-57 (2011)

\bibitem{BS2003} Berman, A., Shaked-Monderer, N.: Completely positive matrices. World Scientific (2003)

\bibitem{BSU2015} Bomze, I., Schachinger, W., Ullrich, R.: New lower bounds and asymptotics for the cp-rank. SIAM J. Matrix Anal. A. 36(1): 20-37 (2015)

\bibitem{CT2019} Cooley, D., Thibaud, E.: Decompositions of dependence for high-dimensional extremes. Biometrika 106(3): 587-604 (2019)

\bibitem{D2010} Dür, M.: Copositive programming–a survey. Recent advances in optimization and its applications in engineering. Springer, Berlin, Heidelberg. 3-20 (2010)

\bibitem{D2013} Drusvyatskiy, D.: Slope and geometry in variational mathematics. PhD thesis, Cornell University (2013) 

\bibitem{DD2012} Dickinson, P., Dür, M.: Linear-time complete positivity detection and decomposition of sparse matrices. SIAM J. Matrix Anal. A.  33(3): 701-720 (2012)

\bibitem{DG2012} Dickinson, P., Gijben, L.: On the computational complexity of membership problems for the completely positive cone and its dual. Comput. Optim. Appl. 57(2):403-415 (2012)

\bibitem{DJL1994} Drew, J., Johnson, C., Loewy, R.: Completely positive matrices associated with M-matrices. Linear Multilinear A. 37(4): 303-310 (1994)

\bibitem{DLM2014} Dong, B., Lin, M., Chu, M.: Nonnegative rank factorization—a heuristic approach via rank reduction. Numer. Algorithms 65(2): 251-274 (2014)

\bibitem{F1964} Fletcher, R., Reeves, M.: Function minimization by conjugate gradients. The Computer Journal. 7(2):149-154 (1964)

\bibitem{F2013} Fletcher, R.: Practical methods of optimization. John Wiley \& Sons (2013)

\bibitem{GD2018} Groetzner, P., Dür, M.: A factorization method for completely positive matrices (2019) Preprint. Online at http://www.optimization-online.org/DB$\_$HTML/2018/03/6511.html

\bibitem{GLL2017} Gribling, S., Laat, D., Laurent, M.: Matrices with high completely positive semidefinite rank. Linear Algebra Appl. 513: 122-148 (2017)

\bibitem{GW1980} Gray, L., Wilson, D.: Nonnegative factorization of positive semidefinite nonnegative matrices. Linear Algebra Appl. 31: 119-127 (1980)

\bibitem{HSS2014}  Huang, K., Sidiropoulos, N., Swami, A.: Non-Negative Matrix Factorization Revisited: Uniqueness and Algorithm for Symmetric Decomposition. IEEE T. Signal Proces. 62(1):211-224 (2014)

\bibitem{K1987} Kaykobad, M.: On nonnegative factorization of matrices. Linear Algebra Appl. 96: 27-33 (1987)

\bibitem{K1994} Kelly, C.: A test of the markovian model of dna evolution. Biometrics. 50(3):653–664 (1994)

\bibitem{KDP2012} Kuang, D., Ding, C., Park, H.: Symmetric nonnegative matrix factorization for graph clustering. Proceedings of the 2012 SIAM international conference on data mining. Society for Industrial and Applied Mathematics. 106-117 (2012)

\bibitem{KG2012}  Kalofolias, V., Gallopoulos, E.: Computing symmetric nonnegative rank factorizations. Linear Algebra Appl. 436(2):421-435 (2012)

\bibitem{KYP2015} Kuang, D., Yun, S., Park, H.: SymNMF: nonnegative low-rank approximation of a similarity matrix for graph clustering. J. Global Optim.  62(3):1-30 (2015)

\bibitem{MBB2015} Shaked-Monderer, N., Berman, A., Bomze, I., et al.: New results on the cp-rank and related properties of co (mpletely) positive matrices. Linear Multilinear A. 63(2): 384-396 (2015)

\bibitem{MS1982} Moré, J., Sorensen, D.: Newton's method. Argonne National Lab., IL (USA) (1982)

\bibitem{MZ1991} Miller, D., Zucker, S.: Copositive-plus Lemke algorithm solves polymatrix games. Oper. Res. Lett. 10(5):285-290 (1991)

\bibitem{NJWS1999} Nocedal, J., Wright, S.: Numerical optimization. Springer Science \& Business Media (2006)

\bibitem{P1969} Polyak, B.: The conjugate gradient method in extremal problems. USSR Computational Mathematics and Mathematical Physics. 9(4): 94-112 (1969)

\bibitem{SY2006} Sun, W., Yuan, Y.: Optimization theory and methods: nonlinear programming. Springer Science \& Business Media (2006)

\bibitem{SH2016} Son, P., Huy, V.: Genericity in polynomial optimization. World Scientific (2016)

\bibitem{VGL2016} Vandaele, A., Gillis, N., Lei, Q., et al.: Efficient and Non-Convex Coordinate Descent for Symmetric Nonnegative Matrix Factorization. IEEE T. Signal Proces. 64(21):5571-5584 (2016)

\bibitem{W1934} Wedderburn, J.: Lectures on matrices. American Mathematical Soc (1934)

\bibitem{XXS2010} Xu, C., Xu, G., So, W.: Soft clustering with CP matrices. International Conference on Biomedical Engineering \& Informatics (2010)

\bibitem{Y2009} Yuan, G.: Modified nonlinear conjugate gradient methods with sufficient descent property for large-scale optimization problems. Optim. Lett. 3(1):11-21 (2009)

\end{thebibliography}
\end{document}